\numberwithin{equation}{section}
\theoremstyle{plain}
\newtheorem{lemma}{Lemma}[section]
\newtheorem{proposition}[lemma]{Proposition}
\newtheorem{theorem}[lemma]{Theorem}
\newtheorem{algorithm}[lemma]{Algorithm}
\newtheorem{corollary}[lemma]{Corollary}
\newtheorem{conjecture}{Conjecture}
\theoremstyle{definition}
\newtheorem{remark}[lemma]{Remark}
\newtheorem{example}[lemma]{Example}
\newtheorem{definition}[lemma]{Definition}
\newtheorem{convention}[lemma]{Convention}
\newcommand{\srt}[2]{{}_{\langle #1,#2 \rangle}}
\newcommand{\hd}{{\operatorname{hd}}}
\newcommand{\soc}{{\operatorname{soc}}}
\newcommand{\dct}[2]{\overset{#2}{\underset{#1}{\conv}} }
\newcommand{\g}{\mathfrak{g}}
\newcommand{\C}{\mathbb{C}}
\newcommand{\Z}{\mathbb{Z}}
\newcommand{\N}{\mathbb{N}}
\newcommand{\cor}{\mathbf{k}}
\newcommand{\h}{\mathsf{h}}
\newcommand{\W}{\mathsf{W}}
\newcommand{\seteq}{\mathbin{:=}}
\newcommand{\al}{\alpha}
\newcommand{\la}{\lambda}
\newcommand{\be}{\beta}
\newcommand{\PR}{\Phi^+}
\newcommand{\tw}{{\widetilde{w}}}
\newcommand{\redex}{{\widetilde{w}}}
\newcommand{\redez}{{\widetilde{w}_0}}
\newcommand{\um}{\underline{m}}
\newcommand{\tb}{\mathtt{b}}
\newcommand{\rl}{\mathsf{Q}}
\newcommand{\wl}{\mathsf{P}}
\newcommand{\cmA}{\mathsf{A}}
\newcommand{\wt}{{\rm wt}}
\newcommand{\het}{{\rm ht}}
\newcommand{\Rep}{{\rm Rep}}
\newcommand{\Dim}{\underline{{\rm dim}}}
\newcommand{\lf}{[\hspace{-0.3ex}[}
\newcommand{\rf}{]\hspace{-0.3ex}]}
\newcommand{\shp}{\hspace{-0.4ex}+\hspace{-0.4ex}}
\newcommand{\soplus}{\mathop{\mbox{\normalsize$\bigoplus$}}\limits}
\newcommand{\conv}{\mathbin{\mbox{\large $\circ$}}}
\newcommand{\Stom}{\overset{\to}{S}_{\redez}(\um)}
\newcommand{\Sgetsm}{\overset{\gets}{S}_{\redez}(\um)}
\newcommand{\Lto}{\longrightarrow}
\newlength{\mylength}
\newcommand{\Aut}{{\mathrm{Aut}}}
\newcommand{\Ind}{{\mathrm{Ind}}}
\newcommand{\Mod}{{\mathrm{Mod}}}
\newcommand{\rmat}[1]{{\mathbf r}_{\mspace{-2mu}\raisebox{-.5ex}{${\scriptstyle{#1}}$}}}
\title[Combinatorial AR-quivers and reduced expressions] {Combinatorial Auslander-Reiten quivers and reduced expressions}
\author[S.-j. Oh, U. Suh]{Se-jin Oh$^{\dagger}$, UhiRinn Suh$^\ddagger$}
\address{Department of Mathematics, Ewha Womans University, Seoul 120-750, South Korea}
\email{sejin092@gmail.com}
\address{Department of Mathematical Sciences, KAIST, 291 Daehak-ro Yuseonggu
Daejeon, 305-701 South Korea}
\email{uhrisu@gmail.com}
\thanks{$^{\dagger}$This work was supported by NRF Grant \#2016R1C1B2013135.}
\thanks{$^\ddagger$ This work was supported by NRF Grant \#2016R1C1B1010721.}
\keywords{combinatorial AR-quiver, reduced expressions}
\date{\today}
\begin{document}


\begin{abstract}
In this paper, we introduce the notion of combinatorial Auslander-Reiten(AR) quiver for commutation classes $[\redex]$
of $w$ in finite Weyl group. This combinatorial object visualizes the convex partial order $\prec_{[\redex]}$ on the subset $\Phi(w)$ of 
positive roots. By analyzing properties of the combinatorial AR-quivers with labelings and reflection maps, we can apply their properties to the representation theory of KLR algebras and
multiplication structure of dual PBW generators associated to any commutation class $[\redez]$ of the longest element $w_0$.
\end{abstract}

\maketitle

\section*{Introduction}

For a Dynkin quiver $Q$ of finite type ADE, the Auslander-Reiten quiver $\Gamma_Q$ encodes the representation theory of
the path algebra $\C Q$ in the following sense: {\rm (i)} the set of vertices corresponds to the set $\Ind \, Q$ of isomorphism classes  of indecomposable $\C Q$-modules, {\rm (ii)}
the set of arrows corresponds to the set of irreducible morphisms for $M,N \in \Ind \, Q$. On the other hand, by reading the residues of vertices of $\Gamma_Q$ in a {\it compatible way} (\cite{B99}), one can obtain
reduced expressions $\redez$ of the longest element $w_0$ in the Weyl group $\W$. Such reduced expressions can be grouped into one class $[Q]$ via commutation equivalence $\sim$:
$$ \redez \sim \redez \text{ if and only if $\redez'$ can be obtained by applying the short braid relations $s_is_j=s_js_i$ properly. }  $$
A reduced expression in $[Q]$ is called {\it adapted to $Q$}. Reduced expressions in $[Q]$ have been used in representation theory intensively. For example, \cite{HL11,Lus90,Lus97} to name a few. However, there are many reduced expressions of $w_0$ which are not adapted to {\it any} Dynkin quiver $Q$.

Another important role of $\Gamma_Q$ in Lie theory is a realization of the convex partial order $\prec_{Q}$ on $\Phi^+$,
which is defined as follows: For a reduced expression $\redez=s_{i_1}s_{i_2} \cdots s_{i_N} \in [Q]$, we label a positive root $s_{i_1}s_{i_2} \cdots s_{i_{k-1}}\alpha_k \in \Phi^+$
 as $\be_k^\redez$ and assign the {\it residue} $i_k$ to $\be_k^\redez$. Then each reduced expression $\redez\in[Q]$ induces the total order  $<_{\redez}$ on $\Phi^+$, $\be^\redez_k <_\redez \be^\redez_l \iff k < l$. Using the total orders $<_{\redez'}$ for $\redez'\in [Q]$, we obtain $\prec_{Q}$ on $\Phi^+$ as follows:
\begin{itemize}
\item $\alpha \prec_{Q} \beta$ if and only if $\alpha <_{\redez'} \beta$ for all $\redez' \in [Q]$,
\item $\alpha \prec_{Q} \beta$ and $\gamma=\alpha+\beta \in \Phi^+$ imply $\alpha \prec_{Q} \gamma \prec_Q \beta$ (the convexity).
\end{itemize}
Note that it is proved in \cite{Papi94,Zelo87} that any convex total order $<$ on $\Phi^+$ is $<_{\redez}$ for some $\redez$ of $w_0$.
As the definition itself, $\prec_{Q}$ is quite complicated since there are lots of reduced expressions in each $[Q]$. Interestingly, $\Gamma_Q$ realizes $\prec_{Q}$ in the sense that
$$ \text{ $\alpha\prec_{Q} \beta$ if and only if there exists a path from $\beta$ to $\alpha$ in $\Gamma_Q$.} $$

For non-adapted reduced expressions $\redez$ and their commutation classes $[\redez]$, the definition $\prec_{[\redez]}$ is still applicable.
However, there was no study on the order $\prec_{[\redez]}$ on $\Phi^+$ for non-adapted $[\redez]$ and apply them to the representation theory,
to the best knowledge of authors.

In this paper, we introduce new quiver $\Upsilon_{[\redex]}$, named as the {\it combinatorial AR-quiver}, for each reduced expression $\redex$ of $w \in \W$. It realizes
the convex partial order $\prec_{[\redex]}$ on $\Phi(w)$ (Theorem \ref{Thm:order_path}); that is,
$$ \text{ $\alpha\prec_{[\redex]} \beta$ if and only if there exists a path from $\beta$ to $\alpha$ in $\Upsilon_{[\redex]}$.} $$
and hence it can be understood as the Hasse quiver associated to the order $\prec_{[\redex]}$ (\cite{Birkhoff}). We first
suggest an algorithm for constructing $\Upsilon_{[\redex]}$ of $\redex=s_{i_1}\cdots s_{i_\ell}$ (Algorithm \ref{Alg_AbsAR}). If we use {\it residues} as labels of the vertices  $\Upsilon_{[\redex]}^0$  in $\Upsilon_{[\redex]}$ instead of $\Phi(w) \subset \Phi^+$, one can
construct $\Upsilon_{[\redex]}$ instantly. Then we can prove that  $\Upsilon_{[\redex]}=\Upsilon_{[\redex']}$ if and only if $[\redex]=[\redex']$, and hence any reduced expressions in $[\redex]$
can be obtained by reading residues of $\Upsilon_{[\redex]}$ in a {\it compatible way} (Theorem \ref{Thm: compatible reading}).

As one can expect, $\Upsilon_{[Q]}$ is isomorphic to $\Gamma_Q$ so that  $\Upsilon_{[\redex]}$
is understood as a generalizations of $\Gamma_Q$ (Theorem \ref{thm: Upsilon Q Gamma Q}).  Hence combinatorial AR quivers have analogous properties to original AR quivers.  For instance, we prove that (i) an arrow $\alpha\to\beta$ in
$\Upsilon_{[\redex]}$ implies $-(\alpha,\beta)=(\al_{i},\al_j)>0$ where $i$ and $j$ are residues of $\al$ and $\beta$, (ii) positive roots in a {\it sectional path} of $\Upsilon_{[\redex]}$ (Definition \ref{Def:sectional})
satisfy distinguished property with respect to the non-degenerate pairing $( \ , \ )$ on the set of root lattice.

 There are enormous number of reduced expressions for $w_0$ and, by grouping into commutation equivalence classes, we reduce the efforts to understand all reduced expressions. However, there are still too many
commutation classes of reduced expressions so that we consider another equivalence relation called {\it reflection equivalence relations} between commutation equivalence classes.  
A family of equivalence classes induced from the reflection equivalences is called an $r$-cluster point $\lf \redez \rf$.
 We would like to point out that there are lots of similarities between representation theories related to commutation classes $[Q]$ and $[Q']$ in the $r$-cluster point $\lf Q \rf$ (for example, \cite{HL11,Lus90,Oh14A,Oh14D,Oh15E}, see also Corollary \ref{cor: similarity}).
In addition, we introduce the notion of Coxeter composition (Definition \ref{def: ref equi}) with respect to a non-trivial Dynkin diagram automorphism $\sigma$. Our conjecture
is that Coxeter compositions classify $r$-cluster points (Conjecture \ref{Conjecture}).

 The most useful for applications but complicate part in combinatorial AR quivers is computing labels in terms of positive roots.  One can see in Algorithm \ref{Alg_AbsAR} that the labeling of  $\Upsilon_{[\redex]}^0$  with $\Phi(w)$ needs lots of computations. In Section \ref{Sec: Labeling}, we suggest an efficient way to reduce large amount of
computations in general. 
Roughly speaking, every positive root in each sectional path shares a  {\it component} (Definition \ref{Def:comp}).  Hence the labeling for a given vertex follows from joining information of sectional paths
passing through it.

In Section \ref{sec: KLR}, we apply our observations in previous sections to the representation theory of KLR-algebras (\cite{BKM12,HL11,KKK13b,Kato12,Mc12}) which categorifies each dual PBW-basis $\{P_{\redez}(\beta) \ | \ \beta \in \Phi^+\}$ (\cite{Lus90A,Saito94}) associated to the reduced expression $\redez$ of $w_0$.
Using the $\prec_{[\redez]}$ realized by $\Upsilon_{[\redez]}$, we can prove that {\it the proper standard module} $S_{\redez}(\beta)$ (\cite{BKM12,Mc12})
over KLR-algebra of each finite type {\it depends only on} its commutation class $[\redez]$ (up to $q^{\Z}$)
and hence so does the dual PBW-monomial associated to $\redez$ (Theorem \ref{thm: app KLR1}). Note that such property was observed in \cite{Oh15E} (see also \cite{Kato12} for ADE cases).
Here we give an alternative proof.
Furthermore, we prove that the proper standard modules $S_{[\redez]}(\beta)$'s for $\beta$'s lying on the same sectional path {\it $q$-commute} to each other and hence so does
the dual PBW-generators $P_{[\redez]}(\beta)$'s (Proposition \ref{prop: q-comm comparable}). Using the reflection maps on each $\lf \redez \rf$, we also observe
similarities among $\{ S_{[\redez]}(\al) \}$ and $\{ S_{[\redez']}(\al') \}$ for $[\redez]$ and $[\redez']$ in the same $r$-cluster point $\lf \redez \rf$ (Corollary \ref{cor: similarity}).

\medskip

\noindent {\bf Acknowledgements.} The first author would like to
express his sincere gratitude to Euiyong Park for many fruitful
discussions.

\section{Auslander-Reiten quivers arising from Dynkin quivers and orders on the set of positive roots }\label{Sec:Aus}

In this section, we recall combinatorial constructions of
Auslander-Reiten quivers from Dynkin quivers . We refer to
\cite{ARS,ASS,Gab80,Lus90} for the basic theories on quiver
representations and Auslander-Reiten quivers. For the combinatorial
properties, we refer to \cite{B99, Oh14A}.

\subsection{Auslander-Reiten quivers} \label{Subsec:Gab-Aus}
Let $\cmA=(a_{ij})_{i, j\in I}$ for $I=\{1, \cdots, n\}$ be a Cartan
matrix of  a finite-dimensional simple Lie algebra $\g$. Let $\Delta$ be the Dynkin diagram associated to $\cmA$.

\begin{definition}
For vertices $i,j\in I$ in $\Delta$, the number of edges between $i$ and $j$ is called the {\it distance} between $i$ and $j$ is denoted by $d_{\Delta}(i,j).$
\end{definition}

We denote by $\Pi_n=\{\alpha_i \, | \, i\in I\}$ the set of simple roots,
$\Phi$ the set of roots, $\Phi^+$ (resp$.\Phi^-$) the set of
positive roots (resp. negative roots). Let  $\{ \epsilon_i \ | \ 1 \le i \le m \}$ be the set of orthonormal basis of $\mathbb{C}^m$.
The free abelian group $\rl\seteq\oplus_{i \in I} \Z \alpha_i$ is called the \emph{root
lattice}. Set $\rl^{+}= \sum_{i \in I} \Z_{\ge 0}
\alpha_i\subset\rl$ and $\rl^{-}= \sum_{i \in I} \Z_{\le0}
\alpha_i\subset\rl$. For $\beta=\sum_{i\in I}m_i\alpha_i\in\rl^+$,
we set $\het(\beta)=\sum_{i\in I}m_i$. Let $(\cdot, \cdot)$ be the
the symmetric bilinear form on $\rl \times \rl$ (we refer \cite[Plate I$\sim$IX]{Bour}).

A Dynkin quiver $Q$ is obtained by adding an orientation to each
edge in the Dynkin diagram $\Delta$.
In other words,  $Q=(Q^0, Q^1)$ where $Q^0$ is the set of vertices
indexed by $I$ and $Q^1$  is the set of oriented edges with the
underlying graph $\Delta$. We say the vertex $i$ is a sink (resp.
source) if every edge between $i$ and $j$ is oriented as follows: $j
\to i$ (resp. $i\to j$).

Let Mod$(\C Q)$ be the category of finite dimensional modules over
the path algebra $\C Q$. An object $M\in \Mod \,\C Q$ consists of
the following data:
\begin{enumerate}
\item a finite dimensional module $M_i$ for each  $i\in Q^0$, 
\item a linear map $\psi_{i\to j}: M_i\to M_j$ for each oriented edge $i\to j$.
\end{enumerate}

The {\it dimension vector} of the module $M$ is $\Dim \, M =
\sum_{i\in I} (\dim \, M_i) \alpha_i$ and a simple object in $\Mod
\, \C Q$ is $S(i)$ for some $i\in I$ where $\Dim S(i)= \alpha_i$. In
$\Mod \, \C Q$, the set of isomorphism classes $[M]$ of
indecomposable modules is denoted by $\Ind \, Q$.

\begin{theorem}[Gabriel's theorem] \label{Thm:Gabr}
Let $Q$ and $\Phi^+$ be a Dynkin quiver and the set of positive
roots of type $A_n$, $D_n$ or $E_n$. Then there is a bijection
between $\Ind \, Q$ and $\Phi^+:$
\[ [M] \mapsto \Dim M.\]
\end{theorem}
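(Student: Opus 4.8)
The plan is to adapt the classical argument of Bernstein--Gelfand--Ponomarev, in which the combinatorics of the Tits form is married to reflection functors. First I would attach to $\C Q$ the \emph{Euler form}
$$\langle M,N\rangle \seteq \dim_\C \operatorname{Hom}_{\C Q}(M,N)-\dim_\C\operatorname{Ext}^1_{\C Q}(M,N),$$
and check, using that $\C Q$ is hereditary, that it depends only on $\Dim M$ and $\Dim N$: explicitly $\langle M,N\rangle=\sum_{i\in I}(\dim M_i)(\dim N_i)-\sum_{i\to j}(\dim M_i)(\dim N_j)$. The symmetrization of this bilinear form is $(\cdot,\cdot)$, and the associated quadratic form $q(d)\seteq\langle d,d\rangle=\tfrac12(d,d)$ is the Tits form. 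Because $\Delta$ is of type $A_n$, $D_n$ or $E_n$, the form $q$ is positive definite; combined with the standard description of the root system this identifies $\Phi^+$ with the finite set $\{\,d\in\rl^+\setminus\{0\}\mid q(d)=1\,\}$.

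Next I would record that $[M]\mapsto\Dim M$ is well defined and split the claim into three assertions: (i) if $M$ is indecomposable then $\Dim M\in\Phi^+$; (ii) the assignment is injective on $\Ind\,Q$; (iii) it is surjective onto $\Phi^+$. For (i) one uses that $\operatorname{End}(M)$ is local and, since $\C$ is algebraically closed, has residue field $\C$; the real content is that an indecomposable has no self-extensions, so that $q(\Dim M)=\dim\operatorname{End}(M)-\dim\operatorname{Ext}^1(M,M)=1$. The tool for (ii) and (iii), and for pinning down the Ext-vanishing in (i), is the pair of \emph{reflection functors} $\Phi_i^{+}\colon\Mod\,\C Q\to\Mod\,\C(s_iQ)$ defined at a sink $i$ and $\Phi_i^{-}$ defined at a source. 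Their essential properties are: $\Phi_i^{+}S(i)=0$; for any indecomposable $M\not\cong S(i)$ the module $\Phi_i^{+}M$ is again indecomposable with $\Dim\Phi_i^{+}M=s_i(\Dim M)$; and $\Phi_i^{-}\Phi_i^{+}M\cong M$ in that case. Thus $\Phi_i^{\pm}$ realize the simple reflection $s_i$ at the level of dimension vectors while preserving indecomposability away from the exceptional simple.

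Finally I would run the induction using an \emph{admissible ordering} of $I$ adapted to $Q$, equivalently a reduced expression $s_{i_1}\cdots s_{i_N}$ of $w_0$ in $[Q]$ for which $\beta_k=s_{i_1}\cdots s_{i_{k-1}}\alpha_{i_k}$ enumerates $\Phi^+$. Applying the corresponding composition of reflection functors to the simple modules produces, for each $\beta_k$, an indecomposable with dimension vector $\beta_k$, giving surjectivity; running the Coxeter functor $C^{+}=\Phi_{i_N}^{+}\cdots\Phi_{i_1}^{+}$ in the reverse direction reduces any indecomposable to a simple after finitely many steps, which simultaneously forces the Ext-vanishing in (i) and yields injectivity in (ii). The crux is the finiteness step peculiar to the ADE hypothesis: because $\W$ is finite the Coxeter element has finite order, and positive definiteness of $q$ prevents a positive dimension vector from surviving indefinitely under $C^{+}$; this is exactly where the restriction to types $A_n$, $D_n$, $E_n$ enters, and verifying the behaviour of $\Phi_i^{\pm}$ on indecomposables is the main technical obstacle.
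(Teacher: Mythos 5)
The paper does not prove this statement: it records Gabriel's theorem as a classical result and refers the reader to the literature (\cite{ARS,ASS,Gab80,Lus90}), so there is no in-paper argument to compare yours against. Your sketch is the standard Bernstein--Gelfand--Ponomarev proof via the Tits form and reflection functors, and as an outline it is correct: the identification of $\Phi^+$ with $\{d\in\rl^+\setminus\{0\}\mid q(d)=1\}$ uses positive definiteness of $q$ in types $A_n$, $D_n$, $E_n$; surjectivity comes from applying reflection functors to simples along a reduced expression of $w_0$ adapted to $Q$; and injectivity together with $\operatorname{End}(M)\cong\C$ and $\operatorname{Ext}^1(M,M)=0$ for indecomposable $M$ comes from reducing $M$ to a simple by the Coxeter functor. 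Two points you should not leave implicit if this were to be written out in full: first, the lemma that $\Phi_i^{+}$ at a sink carries an indecomposable $M\not\cong S(i)$ to an indecomposable with dimension vector $s_i(\Dim M)$ and that $\Phi_i^{-}\Phi_i^{+}M\cong M$ is the genuine technical core and needs an actual proof (it is where one checks that $M$ has no direct summand supported only at $i$ and that the relevant sequence of vector spaces is exact); second, the finiteness step is usually made precise by observing that the Coxeter element $c$ has no nonzero fixed vector in the positive-definite case, so $\sum_{k=0}^{\h-1}c^k d=0$ for any $d$, forcing some $c^k d$ to leave $\rl^+$ --- your phrasing gestures at this but does not quite pin it down. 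With those two items supplied, the argument is complete; this is a different (and cleaner) route than Gabriel's original case-by-case proof, but it is the one the cited references \cite{ASS,Gab80} essentially follow.
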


\vskip 3mm

The Weyl group $\W$ of a finite type with rank $n$ is generated by
simple reflections  $s_i \in \Aut(\rl),$ $i\in I$.
Note that $$\big( w (\al), w (\be) \big)=(\alpha, \beta)$$ for any $w
\in W$ and $\al,\be \in \rl$.

For $w\in \W$, the length of $w$ is
$$\ell(w)=\min\{l\in \Z_{\geq 0} \, | \, s_{i_1}\cdots s_{i_l}= w, \ s_{i_k} \text{ are simple reflections }\}.$$
If $w= s_{i_1} s_{i_2} \cdots  s_{i_{\ell(w)}}$ then the sequence of
simple reflections $\redex=(s_{i_1},  \cdots, s_{i_{\ell(w)}})$ is
called a reduced expression associated to $w$.

We denote  by $w_0$ the longest element in $\W$ and by $^*$ the
involution on $I$ induced by $w_0$; i.e,
\begin{align} \label{eq: star involution}
w_0(\al_i) \seteq -\al_{i^*} \quad \text{ for all } i \in I.
\end{align}

For $w\in \W$, consider the subset (\cite{Bour})
\begin{align} \label{eq: cahracterization}
\Phi(w)=\{\alpha \in \Phi^+ \, | \, w^{-1}(\alpha)\in \Phi^-\}= \{ s_{i_1}s_{i_2} \cdots s_{i_{k-1}}(\alpha_{i_k}) \, | \, k=1, \cdots l\} \text{ such that } |\Phi(w)|=\ell(w).
\end{align}


Let us  consider the action of a simple reflection $s_i$, $i\in I$,
on the set
\[ \lf Q \rf =\{ Q\, |\, Q \text{ is a Dynkin quiver with $\Delta$ as the underlying diagram}\},\]
defined by $s_i (Q)=Q'$, where $s_i(j)=j$ for all $j\in Q_0$ and
\begin{equation*}
(j\to k) \mapsto  \left\{
\begin{array}{ll}
(j\leftarrow k) & \text{ if $j=i$ and $i$ is a source in $Q$,} \\
(j\leftarrow k) & \text{ if $k=i$ and $i$ is a sink in $Q$,} \\
 (j \to k) & \text{ otherwise, }
\end{array} \right. \quad \text{for all $j\to k \in Q_1$.}
\end{equation*}

\begin{definition} \label{def: adapted}
A reduced expression $\redex=(s_{i_1},\cdots ,s_{i_l})$ of $w$ is
said to be {\it adapted} to a Dynkin quiver $Q$ if
\[  i_k \text{ is a sink of } Q_{k-1}= s_{i_{k-1}}\cdots s_{i_1}(Q).\]
\end{definition}

The followings are well-known:
\begin{eqnarray} &&
  \parbox{95ex}{
\begin{enumerate}
\item[{\rm (i)}] A reduced expression $\redex_0$ of $w_0$ is adapted to at most one Dynkin quiver $Q$.
\item[{\rm (ii)}] For each Dynkin quiver $Q$, there is a reduced expression $\redez$ of $w_0$ adapted to $Q$.
\end{enumerate}
}\label{eq: well-known adapted}
\end{eqnarray}

Note that the converse of \eqref{eq: well-known adapted} {\rm (i)}
is not true; that is, two different reduced expressions of $w_0$ can
be adapted to the same Dynkin quiver $Q$. Actually, we can assign a
{\it class} of reduced expressions of $w_0$ to each Dynkin quiver
$Q$.

\begin{definition} \cite{B99,Lus90} \label{Def:comm_equi}
Let $\redex=(s_{i_1},s_{i_2},\cdots ,s_{i_k})$ and $\redex'=(s_{i_1'},s_{i_2'},\cdots, s_{i_k'})$ be
reduced expressions of $w\in W$. If $\redex'$ can be obtained by a
sequence of commutation relations, $s_is_j=s_js_i$ for $d_{\Delta}(i,j)>1$, from $\redex$ then we say $\redex$
and $\redex'$ are {\it commutation equivalent} and write $\redex
\sim \redex'$. The {\it equivalence class} of $\redex$ is denoted by
$[\redex]$.
\end{definition}

\begin{proposition} \cite{B99,Lus90} \label{Prop:Q-W}
Reduced expressions $\redez=(s_{i_1}, s_{i_2},\cdots, s_{i_l})$ and
$\redez'=(s_{i'_1}, s_{i'_2},\cdots, s_{i'_l})$ of $w_0$ are adapted
to a quiver $Q$ if and only if  $\redez \sim\redez'$ and $\redez$ is
adapted to $Q$.
\end{proposition}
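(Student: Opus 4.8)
The plan is to establish the following slightly more general statement by induction on $\ell(w)$ and then specialize to $w = w_0$: \emph{for any $w \in \W$ and any Dynkin quiver $Q$, any two reduced expressions of $w$ that are both adapted to $Q$ are commutation equivalent.} Granting this, the forward implication (both adapted $\Rightarrow$ equivalent and one adapted) is immediate, while the reverse implication amounts to the assertion that adaptedness to $Q$ is a property of the entire commutation class $[\redez]$. I treat the two implications by quite different arguments, so I dispose of the reverse one first.

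For the reverse implication, assume $\redex$ is adapted to $Q$ and $\redex' \sim \redex$. By Definition~\ref{Def:comm_equi} any commutation equivalence is a finite chain of elementary swaps $s_is_j = s_js_i$ with $d_{\Delta}(i,j)>1$, so it suffices to show a single such swap preserves adaptedness. The two facts I would use are: (a) the operation $s_i$ on quivers reverses only the arrows incident to $i$, hence for $d_{\Delta}(i,j)>1$ the operations $s_i$ and $s_j$ commute and neither alters whether the other vertex is a sink; and (b) consequently, if at stage $k$ both $i=i_k$ and $j=i_{k+1}$ are the required sinks with $d_{\Delta}(i,j)>1$, then $j$ is already a sink of $Q_{k-1}$ and $i$ is a sink of $s_j(Q_{k-1})$, and moreover $s_is_j(Q_{k-1}) = s_js_i(Q_{k-1})$. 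Thus the quiver $Q_{k+1}$ and all later stages are unchanged by the swap, so the swapped expression is again adapted to $Q$.

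For the forward implication I induct on $\ell(w)$, the case $\ell(w)=0$ being vacuous. Let $\redex = (s_{i_1},\dots)$ and $\redex' = (s_{i'_1},\dots)$ both be adapted to $Q$, so that $i_1$ and $i'_1$ are sinks of $Q$. If $i_1 = i'_1 = i$, then the two tails are reduced expressions of $s_iw$ both adapted to $s_i(Q)$, so the induction hypothesis gives $\redex \sim \redex'$ after restoring the common initial $s_i$. If $i \seteq i_1 \neq i'_1 \seteq j$, then $i$ and $j$ are distinct sinks of $Q$, which forces $d_{\Delta}(i,j)>1$, since an edge joining two sinks would have to point into both of its endpoints. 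Hence $s_is_j = s_js_i$, and since $i,j$ are both left descents of $w$ with commuting reflections, the standard parabolic factorization yields $\ell(s_is_jw) = \ell(w)-2$; put $w'' = s_is_jw$. Choose any reduced expression $\tilde u$ of $w''$ adapted to $s_is_j(Q) = s_js_i(Q)$. Then $(s_i,s_j,\tilde u)$ and $(s_j,s_i,\tilde u)$ are both adapted to $Q$ and differ by one commutation move. Since $s_iw = s_jw''$, both $(s_j,\tilde u)$ and the tail of $\redex$ are reduced expressions of $s_iw$ adapted to $s_i(Q)$, so by induction the tail of $\redex$ is equivalent to $(s_j,\tilde u)$, whence $\redex \sim (s_i,s_j,\tilde u)$; symmetrically $\redex' \sim (s_j,s_i,\tilde u)$. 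Chaining the three equivalences gives $\redex \sim \redex'$.

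The main obstacle is exactly the case $i_1 \neq i'_1$ in the forward implication. The one genuinely group-theoretic input there is that $w$ admits a reduced expression beginning with the commuting pair $s_is_j$, i.e. $\ell(s_is_jw) = \ell(w)-2$; everything else is bookkeeping. The delicate point in that bookkeeping is keeping track of \emph{which} quiver each subexpression must be adapted to, namely $s_i(Q)$, $s_j(Q)$, and $s_is_j(Q) = s_js_i(Q)$, so that the nested invocations of the induction hypothesis are applied to genuinely adapted expressions of the correct elements $s_iw$, $s_jw$, and $w''$. It is this careful matching of subexpressions with their governing quivers, rather than any hard estimate, that requires the most attention.
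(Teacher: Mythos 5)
The paper itself offers no proof of this proposition; it is quoted from \cite{B99,Lus90}, so there is no internal argument to compare against. Judged on its own terms, your reverse implication is complete and correct, and the skeleton of your forward implication (induction on $\ell(w)$, splitting on whether the initial letters agree, and the observation that two distinct sinks of $Q$ are non-adjacent, hence give commuting reflections with $\ell(s_is_jw)=\ell(w)-2$) is sound.

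There is, however, a genuine gap in the case $i_1\neq i_1'$: you ``choose any reduced expression $\tilde u$ of $w''=s_is_jw$ adapted to $s_is_j(Q)$'' without establishing that one exists. Existence of adapted reduced expressions is not automatic for elements other than $w_0$: in type $A_2$ with $Q$ the quiver $1\to 2$, the element $s_1$ has the unique reduced expression $(s_1)$, which is not adapted to $Q$ because $1$ is a source there. So the existence of $\tilde u$ genuinely requires an argument, and it is the crux of this case rather than bookkeeping; your induction hypothesis, being a uniqueness statement, cannot supply it. The standard repair is to prove, by the same induction, the auxiliary lemma: if $v$ has a reduced expression adapted to $Q'$ and $k$ is simultaneously a sink of $Q'$ and a left descent of $v$, then that expression is commutation equivalent to an adapted one beginning with $s_k$. (Sketch: in an adapted expression of $v$ the letter $k$ must occur before any neighbour of $k$ --- if a neighbour $j_s$ occurred first, the edge $j_s\to k$ would still be intact at that stage, so $j_s$ could not be a sink; and if neither $k$ nor any neighbour occurred at all, then $v^{-1}(\alpha_k)=\alpha_k\in\Phi^+$, contradicting the descent hypothesis. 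The letters preceding that first occurrence of $k$ therefore all commute with $s_k$, and by your reverse implication the commuted expression is still adapted.) Applying this lemma to the tail of $\redex$ --- a reduced expression of $s_iw$ adapted to $s_i(Q)$, with $j$ a sink of $s_i(Q)$ and a left descent of $s_iw$ --- produces the required $\tilde u$. In fact, once this lemma is available your case split becomes unnecessary: applying it to $\redex$ with $k=i_1'$ immediately reduces the forward implication to the case of equal initial letters.
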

Thus we can denote by $[Q]$ the equivalence class of $w_0$
consisting of all reduced expressions adapted to the Dynkin quiver
$Q$.

\medskip

For each Dynkin quiver $Q$, there is a unique {\it Coxeter element}
$\phi_Q$ in $\W$ such that all of reduced expressions of $\phi_Q$ are adapted to $Q$: The element $\phi_Q\in \W$ has
the form
$$ \phi_Q= s_{i_1} s_{i_{2}}\cdots s_{i_n} \qquad
\text{where $\{ i_1,\ldots ,i_n \}=I$.}$$

Now we recall the Auslander-Reiten (AR) quiver $\Gamma_Q$ associated
to a Dynkin quiver $Q$ of type $A_n$, $D_n$, or $E_n$. {\it For the
rest of this section, we assume that $Q$ is a Dynkin quiver of type
$A_n$, $D_n$, or $E_n$.} Let us denote by $\text{Ind}\, Q$ the
set of isomorphism classes $[M]$ of indecomposable modules in
$\text{Mod}\, \C Q$, where $\text{Mod}\, \C Q$ is the category of
finite dimensional modules over the path algebra $\C Q.$

\begin{definition} \label{Def:AR}
Let $\redez$ be a reduced expression of $w_0$ adapted to a Dynkin
quiver $Q$. The quiver $\Gamma_Q=(\Gamma_Q^0, \Gamma_Q^1)$ is called
the Auslander-Reiten quiver (AR quiver)  if
\begin{enumerate}
\item
each vertex $V_M$ in $\Gamma_Q^0$ corresponds to an isomorphism
class $[M]$ in $\text{Ind}\, Q$,
\item an arrow $V_M\to V_{M'}$ in $\Gamma_Q^1$ implies that there exists an irreducible morphism $M\to M'$.
\end{enumerate}
\end{definition}

Gabriel's theorem (Theorem \ref{Thm:Gabr}) tells that there is a
natural one-to-one correspondence between the set $\Gamma_Q^0$ of
vertices in $\Gamma_Q$ and the set $\Phi^+$ of positive roots.
Hence we use $\Phi^+$ as the index set $\Gamma_Q^0$.

The quiver $\Gamma_Q$ of type $A_n$, $D_n$ and $E_n$  can be
obtained by a purely combinatorial method. In order to show this,
we introduce another quiver $A(Q)$ below. 
Denote by $\h$ the Coxeter number and $\mathsf{a}_i$ (resp.
$\mathsf{b}_i$) the number of arrows in $Q$ directed to the vertex
$i$  (resp. $i^*$) between the vertices indexed by $i$ and $i^*$
(see \cite{B99,Gab80,R80}).
\begin{enumerate}
\item Consider the quiver $\N Q$ whose vertices are indexed by $I\times \N$ and the set of arrows is $\{(i,m) \to (j,m),\, (j, m)\to(i, m-1)\, |\, i\to j  \in Q_1 \}$.
\item The subquiver $A(Q)$ of $\N Q$ consists of the vertices $\{(i,m) \, | \, 1\leq m \leq r_i\},$ where
\begin{equation} \label{Eqn:vertex_numbers}
 \mathsf{r}_i= (\h+\mathsf{a}_i-\mathsf{b}_i)/2.
\end{equation}
\end{enumerate}

The following proposition shows the relation between two quivers
$A(Q)$ and $\Gamma_Q$.

\begin{proposition} \cite{B99,R80} \label{Prop:IsomBtwQuivers}
As quivers, $A(Q)$ is isomorphic to $\Gamma_Q$. More precisely, let $\redez=(s_{i_1}, s_{i_2},\cdots, s_{i_l}) \in [Q]$. The isomorphism $\iota_Q:A(Q)\to \Gamma_Q$ is given by the following correspondence between their vertices$:$
\begin{equation} \label{Eqn: AR_vertex}
(i,m) \longleftrightarrow \beta = s_{i_1}s_{i_1} \cdots  s_{i_{k-1}},
\alpha_{i_k} \in \Phi^+  \ \text{ for } m= \#\{i_t=i\, | \, 1 \le i \le k-1\}+1 \text{ and } i=i_k.
\end{equation}
Here the value $(i,m)$ corresponding to $\beta$ does {\it not} depend on the choice of reduced
expression $\redez$ of $w_0$.
\end{proposition}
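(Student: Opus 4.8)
The plan is to read both $\N Q$ and $\Gamma_Q$ as translation quivers and to identify them through Gabriel's bijection $\Dim\colon\Ind Q\xrightarrow{\sim}\Phi^+$ (Theorem~\ref{Thm:Gabr}) together with the interaction between the Auslander--Reiten translation $\tau$ and the Coxeter element $\phi_Q$. On the combinatorial side, a routine check of the arrow rule shows that $\N Q$, and hence its full subquiver $A(Q)$, is a translation quiver whose translation shifts the $\N$-coordinate. On the module side I shall use three standard facts for the hereditary algebra $\C Q$: every indecomposable is $\tau^{-(m-1)}P(i)$ for a unique indecomposable projective $P(i)$ and a unique $m\ge 1$; $\Dim(\tau N)=\phi_Q(\Dim N)$ for every non-projective indecomposable $N$; and $\Gamma_Q$ is the translation quiver of almost split sequences. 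The goal is to show that the combinatorial correspondence $(i,m)\leftrightarrow\be_k$ of \eqref{Eqn: AR_vertex} agrees, under $\Dim$, with $(i,m)\mapsto\tau^{-(m-1)}P(i)$ and carries one translation-quiver structure to the other.

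First I would prove that the correspondence does not depend on the chosen $\redez\in[Q]$. It is enough to check invariance under one commutation move $s_{i_t}s_{i_{t+1}}=s_{i_{t+1}}s_{i_t}$ with $d_\Delta(i_t,i_{t+1})>1$, equivalently $(\al_{i_t},\al_{i_{t+1}})=0$. Setting $w=s_{i_1}\cdots s_{i_{t-1}}$, orthogonality gives $s_{i_t}\al_{i_{t+1}}=\al_{i_{t+1}}$ and $s_{i_{t+1}}\al_{i_t}=\al_{i_t}$, so the move only interchanges $\be_t=w\al_{i_t}$ with $\be_{t+1}=w\al_{i_{t+1}}$, each keeping its residue. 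As $i_t\ne i_{t+1}$, the occurrence index of every residue at every position is unchanged; hence the triple (root, residue, occurrence index) is preserved and $\be\mapsto(i,m)$ is constant on $[Q]$.

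The heart of the matter is the identity $\be_k=\Dim\big(\tau^{-(m-1)}P(i)\big)$ when $\be_k$ is the $m$-th occurrence of residue $i=i_k$. Using the adapted condition (Definition~\ref{def: adapted}) that $i_k$ is a sink of $Q_{k-1}=s_{i_{k-1}}\cdots s_{i_1}(Q)$, one checks that the first occurrences give precisely the projectives, $s_{i_1}\cdots s_{i_{k-1}}\al_{i_k}=\Dim P(i_k)$, and that each further completed sweep of the sinks (one more Coxeter factor $\phi_Q$ in the word) multiplies the associated root by $\phi_Q$; by $\Dim(\tau N)=\phi_Q(\Dim N)$ this is exactly one application of $\tau^{-1}$ at the module level, so increasing $m$ by one corresponds to applying $\tau^{-1}$. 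Iterating yields the displayed identity, and the number of occurrences of $i$ equals the length of the $\tau$-orbit of $P(i)$, i.e.\ $\#\{m\ge 0:\tau^{-m}P(i)\ne 0\}$, which is the count $\mathsf{r}_i=(\h+\mathsf{a}_i-\mathsf{b}_i)/2$ of \eqref{Eqn:vertex_numbers}. Since $\mathsf{b}_i=\mathsf{a}_{i^*}$ forces $\sum_i\mathsf{a}_i=\sum_i\mathsf{b}_i$, we get $|A(Q)^0|=\sum_i\mathsf{r}_i=|\Phi^+|=\ell(w_0)$, so $(i,m)\leftrightarrow\be_k$ is a bijection onto $A(Q)^0=\{(i,m)\mid 1\le m\le\mathsf{r}_i\}$.

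It remains to match arrows. Under $\Dim$ the $\N$-shift of $\N Q$ corresponds to the Auslander--Reiten translation, and the two arrow families $(i,m)\to(j,m)$, $(j,m)\to(i,m-1)$ attached to each arrow $i\to j$ of $Q$ reproduce exactly the meshes $\tau M\to\bigoplus E\to M$ built from the irreducible maps of $\Gamma_Q$; verifying this mesh-by-mesh and fixing the orientation convention consistently shows that $\iota_Q$ sends arrows to arrows, giving $A(Q)\cong\Gamma_Q$. I expect the main obstacle to lie in the third step: pinning down, with the correct direction and signs, that one Coxeter sweep of the adapted word implements $\tau^{-1}$ on dimension vectors and that the $\tau$-orbit length of $P(i)$ is precisely $\mathsf{r}_i$. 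Once these are in place, the arrow-matching is the formal consequence of comparing the two mesh structures.
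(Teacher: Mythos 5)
The paper does not prove Proposition \ref{Prop:IsomBtwQuivers}: it is quoted from \cite{B99,R80} without argument, so there is no internal proof to compare against. Your route --- identify $A(Q)$ with the translation quiver of $\Gamma_Q$ via Gabriel's bijection, match first occurrences with one end of each $\tau$-orbit, and let successive occurrences of a residue track the $\tau$-orbit through the Coxeter transformation --- is exactly the standard argument of the cited references, and your step establishing independence of the choice of $\redez\in[Q]$ (invariance under a single commutation move, since orthogonality fixes both roots and both residues) is clean and complete.

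Two points in your third step need repair before this is a proof rather than an outline. First, the directions as written are inconsistent: with the paper's conventions (see Example \ref{Ex:adapted AR quiver Aprime}, where $\be_{(1,1)}=\al_1=\Dim P(1)$ and $\be_{(1,2)}=[2,3]=\phi_Q(\al_1)$ for $\phi_Q=s_1s_3s_2s_4s_5$), one has $\be_{(i,m+1)}=\phi_Q(\be_{(i,m)})$, so if $(i,m)\mapsto\tau^{-(m-1)}P(i)$ then the correct identity is $\Dim(\tau^{-1}N)=\phi_Q(\Dim N)$, not $\Dim(\tau N)=\phi_Q(\Dim N)$; you flag this, but it must be pinned down, since with the wrong sign the whole labeling is reflected. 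Second, the phrase ``each further completed sweep of the sinks (one more Coxeter factor $\phi_Q$ in the word)'' is not literally available: since the multiplicities $\mathsf{r}_i$ of \eqref{Eqn:vertex_numbers} differ across $i$ (e.g.\ $(4,3,3,3,2)$ in the example), an adapted reduced word for $w_0$ is \emph{not} a power of $\phi_Q$, and the letters of consecutive ``sweeps'' interleave. To get $\be_{(i,m+1)}=\phi_Q(\be_{(i,m)})$ you need either the interleaving lemma (between consecutive occurrences of $i$ every neighbour $j$ of $i$ occurs exactly once --- the paper's Lemma \ref{Lem:adapted/nonadapted} gives only the ``at least once'' half), or the fact that the adapted word is commutation-equivalent to a truncation of $\phi_Q^{\infty}$ in which each residue $i$ keeps its first $\mathsf{r}_i$ occurrences, so that prefix products at surviving positions are unchanged. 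With those two items supplied, the counting via $\sum_i\mathsf{r}_i=n\h/2=|\Phi^+|$ and the mesh-by-mesh arrow matching go through as you describe.
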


We call the $i$ of $\beta$ in \eqref{Eqn: AR_vertex} the {\it
residue} of $\beta$ (with respect to $Q$). By the above proposition,
the quiver $\Gamma_Q$ does {\it not} depend on the choice of reduced
expression $\redez$ in $[Q]$.

\begin{remark}
We sometimes denote by
$\Gamma_{[\redez]}$ for $\redez \in [Q]$ instead of $\Gamma_Q$ to
emphasize that it {\it does} depend {\it only} on the equivalent
class $[\redez]$.
\end{remark}

\begin{example} \label{Ex:adapted AR quiver Aprime}
{\rm 1)} Let $\redez=(s_1,s_3, s_2, s_4, s_1, s_3, s_5, s_2, s_4,
s_1, s_3, s_5, s_2, s_4, s_1)$ of $A_5$, which is  adapted to the
Dynkin quiver $ Q ={\xymatrix@R=3ex{ *{ \circ
}<3pt> \ar@{<-}[r]_<{  1}  &*{\circ}<3pt> \ar@{->}[r]_<{  2}
&*{\circ}<3pt> \ar@{<-}[r]_<{  3} &*{\circ}<3pt>
\ar@{<-}[r]_<{  4} & *{\circ}<3pt> \ar@{-}[l]^<{\ \
5} }}. $

{\rm 2)} The quiver $\N Q$ is
$$
 \scalebox{0.84}{\xymatrix@C=2ex@R=2ex{
\cdots &5&4&3&2&1& \text{ $\Z_{>0}$/residue} \\
 \cdots & (1,5)\ar@{->}[dr]    & (1,4)\ar@{->}[dr]  & (1,3)\ar@{->}[dr] &(1,2) \ar@{->}[dr] & (1,1) & 1  \\
\cdots &  (2,5) \ar@{->}[u]\ar@{->}[d] &(2,4) \ar@{->}[u]\ar@{->}[d]&(2,3) \ar@{->}[u]\ar@{->}[d]&(2,2) \ar@{->}[u]\ar@{->}[d]& (2,1) \ar@{->}[u]\ar@{->}[d] & 2\\
\cdots & (3,5) \ar@{->}[ur]\ar@{->}[dr] & (3,4)\ar@{->}[ur]\ar@{->}[dr] & (3,3)\ar@{->}[ur]\ar@{->}[dr] & (3,2)\ar@{->}[ur]\ar@{->}[dr] & (3,1) & 3\\
\cdots & (4,5) \ar@{->}[u]\ar@{->}[dr] &  (4,4) \ar@{->}[u]\ar@{->}[dr] &  (4,3) \ar@{->}[u]\ar@{->}[dr]  & (4,2) \ar@{->}[u] \ar@{->}[dr] & (4,1) \ar@{->}[u] & 4\\
\cdots & (5,5) \ar@{->}[u] &(5,4) \ar@{->}[u] &(5,3) \ar@{->}[u]
&(5,2) \ar@{->}[u] &(5,1) \ar@{->}[u] & 5 }}
$$
{\rm 3)} Compute
$\mathsf{r}_i=(\mathsf{h}+\mathsf{a}_i-\mathsf{b}_i)/2$. In this
case $\big(\mathsf{r}_i \, | \, i \in I \big)=(4,3,3,3,2)$ since
$\mathsf{h}=6$. Take the finite subquiver $A(Q)$ of $\N Q$
consisting of the vertices $\{(i,j)| 1\leq j\leq \mathsf{r}_i\}$.
Then $A(Q)$ is isomorphic to $\Gamma_Q$ by the map $\iota_Q:A(Q)\to
\Gamma_Q$:
$$  \scalebox{0.84}{\xymatrix@C=2ex@R=2ex{
   & (1,4)\ar@{->}[dr]  & (1,3)\ar@{->}[dr] &(1,2) \ar@{->}[dr] & (1,1)  \\
 &&(2,3) \ar@{->}[u]\ar@{->}[d]&(2,2) \ar@{->}[u]\ar@{->}[d]& (2,1) \ar@{->}[u]\ar@{->}[d]\\
 & & (3,3)\ar@{->}[ur]\ar@{->}[dr] & (3,2)\ar@{->}[ur]\ar@{->}[dr] & (3,1) \\
 & &  (4,3) \ar@{->}[u]\ar@{->}[dr]  & (4,2) \ar@{->}[u] \ar@{->}[dr] & (4,1) \ar@{->}[u]\\
 && &(5,2) \ar@{->}[u] &(5,1) \ar@{->}[u] }}  \qquad \mapsto
   \scalebox{0.84}{\xymatrix@C=2ex@R=2ex{
   & [5]\ar@{->}[dr]  & [4]\ar@{->}[dr] &[2,3] \ar@{->}[dr] & [1] \\
 &&[4,5] \ar@{->}[u]\ar@{->}[d]&[2,4] \ar@{->}[u]\ar@{->}[d]&[1,3] \ar@{->}[u]\ar@{->}[d]\\
 & & [2,5]\ar@{->}[ur]\ar@{->}[dr] & [1,4]\ar@{->}[ur]\ar@{->}[dr] & [3] \\
 & &  [2] \ar@{->}[u]\ar@{->}[dr]  & [1,5] \ar@{->}[u] \ar@{->}[dr] & [3,4] \ar@{->}[u]\\
 && &[1,2] \ar@{->}[u] &[3,5] \ar@{->}[u] }}
$$
where $[i,j]=\sum_{k=i}^j \alpha_k \in \PR$ and
$[i,i]=[i]=\alpha_i$.
\end{example}

\vskip 2mm


Now we introduce the height function $\xi:I\to \Z$ associated to $Q$
and describe another combinatorial description of $\Gamma_Q$ using
the height function and the Coxeter element $\phi_Q$ (see
\cite{HL11}).

\begin{definition} \label{Def:height}
The {\it height function}  $\xi$  associated to the quiver $Q$ is a
map on $Q$ satisfying $\xi(j)=\xi(i)-1 \in \Z$ if there exists an
arrow $i\to j$.
\end{definition}


\medskip

The {\it repetition quiver} $\Z Q$ of $Q$ has the set of vertices
$$  (\Z Q)^0 =\{(i,p)\in I\times\Z \ | \ p-\xi(i)\in 2\Z\}$$
and arrows $ (\Z Q)^1=\{(j,p+1)\to (i, p), \, (i,p)\to (j, p-1)\, |
\,  \text{ $i$ and $j$ as connected in $Q$}\}$. For $i \in I$, we
define positive roots $\gamma_i$ and $\theta_i$ in the following
way:
\begin{align} \label{eq: gam thet}
\gamma_i = \sum_{j \in \overset{\gets}{i}} \alpha_j \quad
\text{and} \quad\theta_i = \sum_{j \in \overset{\to}{i}} \alpha_j
\qquad \text{ where }
\end{align}
\begin{itemize}
\item $\overset{\gets}{i}$ is the set of vertices $j$  in  $Q^0$ 
such that there exists a path from $i$ to $j$,
\item $\overset{\to}{i}$ is the set of vertices $j$ in  $Q^0$ 
such that there exists a path from $j$ to $i$.
\end{itemize}
%
%
Note that $\{ \gamma_i \, | \, i \in I \} = \Phi(\phi_Q)$ and $\{
\theta_i \, | \, i \in I \} = \Phi(\phi^{-1}_Q)$.

Consider the map $\pi:\Phi^+ \to (\Z Q)_0$ such that
 \begin{equation} \label{Eqn:label_B}
  \gamma_i \mapsto(i, \xi(i)), \qquad \phi_Q(\alpha)\mapsto (i, p-2)\quad \text{ if } \pi(\alpha)= (i,p) \text{ and } \phi_Q(\alpha), \, \alpha \in\Phi^+.
  \end{equation}
Then we have the following:
\begin{enumerate}
\item The subquiver $B(Q)$ of $\Z Q$ consisting of $\pi(\Phi^+)$ is the same as the quiver $\Gamma_Q$ by identifying their vertices as $\Phi^+$.
\item Recall that $A(Q)$ in Proposition \ref{Prop:IsomBtwQuivers} is isomorphic to $\Gamma_Q$.
The map $A(Q)\to B(Q)$ is given by $(i,m)\mapsto(i, \xi(i)-2(m-1))$
by considering coordinates of all $\beta \in \Phi^+$.
\end{enumerate}

Since $A(Q)$ and $B(Q)$ are isomorphic quivers to $\Gamma_Q$,
indices of $A(Q)$ and $B(Q)$ give coordinates to positive roots in
$\Gamma_Q$. The coordinate induced by $B(Q)$ has meanings in the
description of reflection map related to $[\redez]$ for $\redez$
which is adapted to some Dynkin quiver $Q$ (see \eqref{ref Q h}
below).

\begin{definition} \label{def: sectional path}
A path $\beta_0 \to \beta_1 \to \cdots \beta_s$ in $\Gamma_Q$ is called a {\it sectional} path if
$\phi_Q(\beta_{k+1})=\beta_{k-1}$ for all $1 \le k \le s-1$.
\end{definition}

\begin{example}
The AR quiver $\Gamma_Q$ associated to $Q$  in Example
\ref{Ex:adapted AR quiver Aprime} is
\[ \scalebox{0.84}{\ \xymatrix@C=2ex@R=1ex{
( i,p ) &-6&-5&-4&-3&-2&-1&0\\
1& [5]\ar@{->}[dr] & & [4]\ar@{->}[dr] & &[2,3] \ar@{->}[dr] & &[1]  \\
2&&[4,5]\ar@{->}[ur]\ar@{->}[dr]& &[2,4] \ar@{->}[ur]\ar@{->}[dr]&& [1,3] \ar@{->}[ur]\ar@{->}[dr]\\
3&&& [2,5]\ar@{->}[ur]\ar@{->}[dr] && [1,4]\ar@{->}[ur]\ar@{->}[dr] && [3]\\
4&&  [2] \ar@{->}[ur]\ar@{->}[dr]  && [1,5] \ar@{->}[ur] \ar@{->}[dr] && [3,4] \ar@{->}[ur]\\
5&&&[1,2] \ar@{->}[ur] &&[3,5] \ar@{->}[ur]}}.
\]
Here the coordinate $(i,p)$ is induced from that of $B(Q)$.
\end{example}

Combinatorially, a path is sectional if the path is {\it upwards} or {\it downwards} in $\Gamma_Q$ under our convention.

\vskip 3mm

\subsection{Partial orders on $\Phi(w)$}
Let $w$ be an element in $\W$ of finite type. An order $\preceq$ on
the set $\Phi(w)$ is said to be {\it convex} if \[\text{ $\alpha,
\beta, \alpha+\beta \in \Phi(w)$ and $\alpha\preceq \beta$ implies
$\alpha\preceq \alpha+\beta \preceq \beta$. }\]

Each reduced expression $\redex$ of $w\in \W$ induces a total order
on $\Phi(w)$ using the position function defined as follows:

\begin{definition} \label{Def:pos,F}
Let $\redex= (s_{i_1},s_{i_2}, \cdots, s_{i_l})$ be a reduced
expression of $w$.
 The {\it position function } $\pi_{\redex}: \Phi(w) \to \N$ associated to the reduced expression $\redex$ is
 defined by $\beta^\redex_k=s_{i_1}s_{i_2} \cdots s_{i_{k-1}}(\alpha_{i_k})\mapsto k$.
\end{definition}

\begin{definition}
The total order $<_\redex$ on $\Phi(w)$ associated to $\redex$ is
defined by
$$ \beta^\redex_j <_\redex \beta^\redex_k \quad \text{ if and only if }  \quad  j=\pi_\redex(\beta^\redex_j)<\pi_\redex(\beta^\redex_k)=k.$$
\end{definition}


\begin{definition} 
Let $\alpha, \beta\in \Phi(w)\subset \Phi^+$. We define an order
$\prec_{[\redex]}$ on $\Phi(w)$ as follows:
$$\alpha\prec_{[\redex]}\beta   \quad \text{ if and only if }  \quad   \alpha <_{\redex'}\beta \quad \text{ for any } \ \redex' \in [\redex].$$
\end{definition}

\begin{proposition} \cite{Papi94}
The total order $<_{\redex}$  and the partial order
$\prec_{[\redex]}$ are convex orders on $\Phi(w)$.
\end{proposition}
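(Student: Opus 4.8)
The plan is to first observe that the convexity of the partial order $\prec_{[\redex]}$ is a purely formal consequence of the convexity of the total orders $<_{\redex'}$, so that all the content lies in the single statement that each $<_{\redex}$ is convex. Indeed, every $\redex'\in[\redex]$ is a reduced expression of the \emph{same} element $w$, hence determines the same set $\Phi(w)$, and by definition $\alpha\prec_{[\redex]}\beta$ means $\alpha<_{\redex'}\beta$ for every $\redex'\in[\redex]$. Thus if $\alpha,\beta,\alpha+\beta\in\Phi(w)$ with $\alpha\prec_{[\redex]}\beta$, then for each fixed $\redex'\in[\redex]$ we have $\alpha<_{\redex'}\beta$, and convexity of $<_{\redex'}$ yields $\alpha<_{\redex'}\alpha+\beta<_{\redex'}\beta$; since this holds for all $\redex'\in[\redex]$ we conclude $\alpha\prec_{[\redex]}\alpha+\beta\prec_{[\redex]}\beta$. (The same reasoning applied to transitivity shows $\prec_{[\redex]}$ is a partial order, being the intersection of the total orders $<_{\redex'}$.)

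To prove that $<_{\redex}$ is convex for $\redex=(s_{i_1},\dots,s_{i_l})$, I would work with the prefixes $u_k=s_{i_1}\cdots s_{i_k}$ (with $u_0=e$). The first input, extracted from \eqref{eq: cahracterization} applied to the reduced prefix $u_k$, is the identification
\[
\Phi(u_k)=\{\beta^{\redex}_1,\dots,\beta^{\redex}_k\},
\]
so that the position function $\pi_\redex$ records exactly the first index $k$ at which a given root of $\Phi(w)$ enters the increasing chain of prefix inversion sets. The second input is the \emph{biconvexity} of each $\Phi(u_k)$: both $\Phi(u_k)$ and its complement $\Phi^+\setminus\Phi(u_k)$ are closed under root addition. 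This follows directly from $\Phi(u)=\{\gamma\in\Phi^+\mid u^{-1}\gamma\in\Phi^-\}$: if $\mu,\nu\in\Phi(u_k)$ and $\mu+\nu\in\Phi^+$, then $u_k^{-1}(\mu+\nu)=u_k^{-1}\mu+u_k^{-1}\nu$ is a sum of two negative roots, hence negative, so $\mu+\nu\in\Phi(u_k)$; symmetrically, if $\mu,\nu\notin\Phi(u_k)$ then $u_k^{-1}\mu,u_k^{-1}\nu\in\Phi^+$ force $u_k^{-1}(\mu+\nu)\in\Phi^+$, i.e.\ $\mu+\nu\notin\Phi(u_k)$.

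With these in hand the statement is immediate. Let $\alpha,\beta,\gamma=\alpha+\beta\in\Phi(w)$ with $\alpha<_{\redex}\beta$, and put $a=\pi_\redex(\alpha)$, $b=\pi_\redex(\beta)$, $c=\pi_\redex(\gamma)$, so $a<b$; since $\alpha,\beta,\gamma$ are pairwise distinct positive roots and $\pi_\redex$ is injective, $c\notin\{a,b\}$. To obtain $c<b$: both $\alpha,\beta\in\Phi(u_b)$, so closedness forces $\gamma\in\Phi(u_b)=\{\beta^{\redex}_1,\dots,\beta^{\redex}_b\}$, whence $c\le b$, hence $c<b$. To obtain $a<c$: both $\alpha,\beta\notin\Phi(u_{a-1})$, so closedness of the complement forces $\gamma\notin\Phi(u_{a-1})$, whence $c\ge a$, hence $c>a$. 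Therefore $a<c<b$, that is $\alpha<_\redex\gamma<_\redex\beta$, which is exactly convexity of $<_{\redex}$.

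The only nontrivial point, which I would regard as the heart of the argument, is the biconvexity of the prefix inversion sets $\Phi(u_k)$ together with the identification $\Phi(u_k)=\{\beta^{\redex}_1,\dots,\beta^{\redex}_k\}$; the remainder is a short bookkeeping of positions. I do not anticipate a genuine obstacle, since both facts are standard consequences of \eqref{eq: cahracterization}; the one thing to be careful about is to invoke \eqref{eq: cahracterization} for the prefixes $u_k$, whose displayed subwords are themselves reduced, rather than only for $w$ itself.
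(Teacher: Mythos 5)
Your argument is correct and complete. Note that the paper itself supplies no proof of this proposition --- it is quoted from Papi's work with a citation only --- so there is nothing internal to compare against; your two reductions (convexity of $\prec_{[\redex]}$ as a formal consequence of convexity of each total order $<_{\redex'}$, and convexity of $<_{\redex}$ via the identification $\Phi(u_k)=\{\beta^{\redex}_1,\dots,\beta^{\redex}_k\}$ together with biconvexity of the prefix inversion sets) constitute the standard proof and are exactly the content underlying the cited characterization. The only points worth double-checking are the ones you already flag: that each prefix $u_k$ is itself reduced so that \eqref{eq: cahracterization} applies to it, and that a root whose simple-root coefficients are all nonpositive is necessarily negative (which justifies the closure of $\Phi(u_k)$ and of its complement under addition). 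Both hold, so the proof stands.
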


The AR quiver $\Gamma_Q$ visualizes the convex partial order
$\prec_{[Q]}$ when $Q$ is a Dynkin quiver $Q$ of type ADE in the
following sense:

\begin{proposition}\cite{R96} \label{Prop:par_ord/ar}
For $\redez \in [Q]$ and $\alpha, \beta\in \Phi^+$, we have $\alpha\prec_{[\redez]} \beta$  if and only if there is a path from
$\beta$ to $\alpha$ in $\Gamma_{[\redez]}$.
\end{proposition}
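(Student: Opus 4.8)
The plan is to prove that $\prec_{[\redez]}$ coincides with the \emph{reachability order} $\prec_Q$ on $\Phi^+$ defined by $\alpha \prec_Q \beta$ if and only if there is a directed path of positive length from $\beta$ to $\alpha$ in $\Gamma_{[\redez]}$. First I would record that $\prec_Q$ is a genuine (strict) partial order: along every arrow the coordinate $p$ in the description $B(Q)\subset \Z Q$ strictly increases, so $\Gamma_{[\redez]}$ is acyclic and reachability is antisymmetric. Since $\prec_{[\redez]} = \bigcap_{\redez'\in[\redez]} <_{\redez'}$ by definition, and since the intersection of all linear extensions of a finite poset returns the poset itself (the elementary order-theoretic fact that if $\alpha \not\prec_Q \beta$ then some linear extension places $\beta$ before $\alpha$), the whole statement reduces to the identification
\[
\{\, <_{\redez'} \ | \ \redez'\in[\redez] \,\} \;=\; \{\, \text{linear extensions of } \prec_Q \,\}.
\]

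For the inclusion $\subseteq$ I would show that each $<_{\redez'}$ is a linear extension of $\prec_Q$, i.e.\ every arrow $\beta\to\alpha$ forces $\alpha <_{\redez'}\beta$; by transitivity of the total orders this upgrades from single arrows to arbitrary paths. Here I would use the reflection-functor reading of adapted words (Definition \ref{def: adapted}, Proposition \ref{Prop:Q-W}, \cite{B99}): producing a reduced expression in $[\redez]=[Q]$ amounts to repeatedly choosing a vertex with no outgoing arrow (a minimal element of $\prec_Q$) in the current AR quiver, recording its residue, and deleting it, the $k$-th deleted vertex being $\beta^{\redez'}_k$. This vertex--root correspondence is independent of the chosen $\redez'\in[\redez]$ by Proposition \ref{Prop:IsomBtwQuivers}. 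Since an arrow $\beta\to\alpha$ is an outgoing arrow at $\beta$, the vertex $\beta$ is not minimal so long as $\alpha$ survives, hence cannot be read before $\alpha$; this gives $\alpha <_{\redez'}\beta$.

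For the reverse inclusion $\supseteq$ I would show every linear extension of $\prec_Q$ is realized by some $\redez'\in[\redez]$. Any linear extension is a topological sort of $\prec_Q$, obtained by repeatedly removing a minimal element, i.e.\ a sink of the current AR quiver; by the same reflection-functor correspondence each such removal sequence assembles into a reduced expression adapted to $Q$, hence lying in $[Q]=[\redez]$ by Proposition \ref{Prop:Q-W}. Equivalently, one can argue that any two linear extensions are connected by transpositions of adjacent incomparable elements, and that at the level of words such a transposition is exactly a commutation move $s_is_j=s_js_i$ with $d_\Delta(i,j)>1$, which preserves the commutation class. Combining the two inclusions with the order-theoretic lemma yields $\prec_{[\redez]}=\prec_Q$, which is precisely the assertion that $\alpha\prec_{[\redez]}\beta$ iff there is a path from $\beta$ to $\alpha$ in $\Gamma_{[\redez]}$.

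The main obstacle is the surjectivity $\supseteq$: the claim that \emph{every} topological sort of $\Gamma_{[\redez]}$ comes from an honest reduced expression in $[\redez]$, for which one must invoke the full strength of the sink-removal description of adapted words rather than merely the existence of one such word (and, in the word-theoretic reformulation, verify the equivalence between ``adjacent incomparable roots'' and ``commuting residues''). By comparison the forward inclusion and the order-theoretic reduction are formal. A secondary point needing care is the unambiguity of ``reading the AR quiver'', namely that the identification of vertices of $\Gamma_{[\redez]}$ with the roots $\beta^{\redez'}_k$ is independent of $\redez'\in[\redez]$, which is exactly what Proposition \ref{Prop:IsomBtwQuivers} supplies.
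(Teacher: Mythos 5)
Your argument is correct, but it is a genuinely different route from the paper's. The paper offers no proof of this proposition at all — it is quoted from \cite{R96} — and its self-contained proof of the strictly more general statement (Theorem \ref{Thm:order_path}, valid for any commutation class of any $w$, via $\Upsilon_{[\redex]}\simeq\Gamma_Q$) goes through a direct word-combinatorial construction: given $\alpha,\beta$ with no path between them, Proposition \ref{Lemma:converse} and Corollary \ref{cor: imcom adja} explicitly shuffle the letters of a reduced word to produce two expressions in $[\redex]$ placing $\alpha,\beta$ adjacently in either order, whence incomparability; the converse direction is taken as immediate from the construction of the quiver. You instead reduce everything to the order-theoretic fact that a finite poset is the intersection of its linear extensions, and then identify $\{<_{\redez'} : \redez'\in[\redez]\}$ with the set of all linear extensions of the reachability order, leaning on the sink-removal/compatible-reading description of adapted words (Theorem \ref{Thm:adapted_compatible reading}, \cite{B99}) for the inclusion $\subseteq$ and on the adjacent-transposition connectivity of linear extensions for $\supseteq$. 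Both are sound; you correctly isolate the surjectivity of readings as the real content, and your second argument for it (adjacent incomparable roots have residues at distance $\neq 1$, hence the swap is a commutation move preserving the class and the root labels) closes that gap without needing the full converse of the reflection-functor correspondence. What each approach buys: the paper's shuffling argument uses nothing about adaptedness and so extends verbatim to arbitrary commutation classes, which is the point of Section \ref{Sec: Upsilon}; your argument, as written, is tied to the adapted setting but makes the structural statement explicit that the reading orders of $[Q]$ are \emph{exactly} the linear extensions of $\Gamma_Q$, which is stronger than the proposition itself and closer in spirit to Theorem \ref{Thm: compatible reading}. One cosmetic caution: your $\prec_Q$ (pure reachability) is not notationally identical to the $\prec_Q$ of the introduction, which is presented with a convexity clause, though of course your proof shows the two coincide.
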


Recall that the order of $\Phi(w)$ via $<_\redex$ is determined by
the value of the position function $\pi_\redex$. Now, we introduce a
function called level function $\lambda_{\redex}$, which is closely
related to the partial order $\prec_{[\redex]}$.

\begin{definition} \cite{B99} \label{Def:lev,F}
Let $\redex= (s_{i_1},s_{i_2}, \cdots, s_{i_l})$ be a reduced
expression of $w$.
 Given $\alpha\in \Phi(w)$, let
\begin{equation} \label{Eqn:seq}
\beta_1, \beta_2, \cdots, \beta_k=\alpha
\end{equation}
be a sequence of distinct elements of $\Phi(w)$ ending with $\alpha$
such that $\pi_\redex(\beta_{i-1})<\pi_\redex(\beta_i)$ and
$(\beta_i, \beta_{i-1})\neq 0$. The function
$\lambda_{\redex}:\Phi(w)\to \N$ associated to the reduced
expression $\redex$ is defined as follows:
 \[\lambda_{\redex} (\alpha)=\max \left\{k\geq 1 \, | \, \beta_1, \beta_2, \cdots ,\beta_k=\alpha \text{ is the sequence in }(\ref{Eqn:seq})  \right\} .\]
We call it {\it the level function} associated to $\redex$.
\end{definition}

\begin{proposition} \cite{B99}
Two reduced expressions $\redex$ and $\redex'$ of $w$ are
commutation equivalent if and only if
$\lambda_{\redex}=\lambda_{\redex'}$.
\end{proposition}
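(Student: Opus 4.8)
The plan is to prove the two implications separately, disposing of the forward direction quickly and then attacking the converse by induction on $\ell(w)$, where the real content lies. For the forward direction, since $\sim$ is generated by single swaps, I would reduce to the case where $\redex'$ is obtained from $\redex=(s_{i_1},\dots,s_{i_l})$ by exchanging two adjacent commuting letters $s_{i_k}s_{i_{k+1}}$ with $(\al_{i_k},\al_{i_{k+1}})=0$. Using $s_{i_k}(\al_{i_{k+1}})=\al_{i_{k+1}}$ one checks directly that this swap merely transposes the two roots $\be^\redex_k,\be^\redex_{k+1}$ — which are themselves orthogonal, $(\be^\redex_k,\be^\redex_{k+1})=0$ — and fixes every other element of $\Phi(w)$, while interchanging only their two consecutive positions. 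The key point is that a level-defining sequence as in \eqref{Eqn:seq} can never contain both of them: occupying the consecutive positions $k,k+1$, they would have to be adjacent in the sequence, contradicting the requirement $(\beta_i,\beta_{i-1})\neq 0$. Hence the admissible sequences for $\redex$ and $\redex'$ coincide verbatim, giving $\lambda_{\redex}=\lambda_{\redex'}$, and transitivity finishes this direction.

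For the converse, the first and main step is to identify the level-one roots. I claim $\lambda_{\redex}(\be^\redex_k)=1$ if and only if $s_{i_k}$ commutes with all preceding letters (equivalently, can be carried to the front by commutations), in which case $\be^\redex_k=\al_{i_k}$ is simple. One implication follows by moving $s_{i_k}$ to the front through orthogonal transpositions and reading off that each earlier root is then fixed by $s_{i_k}$, hence orthogonal to $\al_{i_k}$. For the other, I would take the latest earlier index $j_0<k$ with $(\al_{i_{j_0}},\al_{i_k})\neq0$ and compute, using the $\W$-invariance of $(\ ,\ )$, that $(\be^\redex_{j_0},\be^\redex_k)=-(\al_{i_{j_0}},\al_{i_k})\neq 0$, exhibiting a non-orthogonal predecessor. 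This shows that the level-one set $L_1=\{\al\in\Phi(w)\mid \lambda_{\redex}(\al)=1\}$ is exactly $\{\al_i\mid i\in J\}$, where $J$ is the set of initial residues, which are pairwise non-adjacent; since $L_1$ is read off from $\lambda_{\redex}$ alone, it is common to $\redex$ and $\redex'$.

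The second step peels off this entire initial block at once. As the initial letters pairwise commute, I may write $\redex\sim c\cdot\redex^\flat$ with $c=\prod_{i\in J}s_i$ and $\redex^\flat$ a reduced word of $w''=c^{-1}w$, and likewise $\redex'\sim c\cdot\redex'^\flat$ with the \emph{same} $c$ and $w''$. The isometry $c$ identifies $\Phi(w'')$ with $\Phi(w)\setminus L_1$, preserving the bilinear form and the relative order of positions. The decisive computation is that any longest admissible sequence necessarily begins at a level-one root, so that deleting its first entry, together with transport by $c$, yields the uniform shift
\begin{equation*}
\lambda_{\redex^\flat}(\ga)=\lambda_{\redex}(c\,\ga)-1 \qquad \text{for all } \ga\in\Phi(w'').
\end{equation*}
Because $\lambda_{\redex}=\lambda_{\redex'}$ while $c$ and $w''$ are shared, this forces $\lambda_{\redex^\flat}=\lambda_{\redex'^\flat}$; the induction hypothesis then gives $\redex^\flat\sim\redex'^\flat$, and prepending the common prefix $c$ yields $\redex\sim c\cdot\redex^\flat\sim c\cdot\redex'^\flat\sim\redex'$.

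I expect the main obstacle to be precisely the level-one characterization together with the shift identity. Two things must be verified carefully: that a longest increasing non-orthogonal chain always starts in $L_1$ (so exactly one unit of length is lost upon deletion and no more), and that the correspondence $\ga\mapsto c\,\ga$ respects the chain structure, so that the induction is fed with genuinely equal level functions on the same root subsystem $\Phi(w'')$. Once these geometric inputs are in place, the inductive descent is purely formal.
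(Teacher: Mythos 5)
The paper offers no proof of this proposition: it is quoted from \cite{B99} and used as a black box (its role is to justify the notation $\lambda_{[\redex]}$ and to feed Theorem \ref{Thm:AR_class}). So there is nothing in the paper to compare against, and I can only judge your argument on its own terms: it is correct and complete in outline. The forward direction is exactly right — an orthogonal swap transposes two orthogonal roots sitting in consecutive positions, so no admissible chain can contain both and the chain structure is untouched. In the converse, your level-one characterization is sound: the computation $(\be^\redex_{j_0},\be^\redex_k)=-(\al_{i_{j_0}},\al_{i_k})\ne 0$ with $j_0$ maximal is the right one, and it also covers the case of a repeated letter $i_{j_0}=i_k$. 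The two points you flag in the shift identity do check out. First, in the rearranged word $c\cdot\redex^{\flat}$ the level-one roots are precisely the $|J|$ roots of the prefix: a position beyond the prefix whose letter commuted with everything before it would produce a second copy of some $\al_j$ with $j\in J$, contradicting the distinctness of the roots $\be^\redex_k$; hence every root of $c\cdot\Phi(w'')$ has a non-orthogonal predecessor and a maximal chain must start in $L_1$. Second, since the elements of $L_1$ are pairwise orthogonal and chains are increasing in position, a chain meets $L_1$ at most once and only in its first entry; moreover that predecessor cannot lie in $c\cdot\Phi(w'')$ without contradicting maximality of the $\redex^{\flat}$-chain. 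Together these give both inequalities in $\lambda_{\redex^{\flat}}(\ga)=\lambda_{\redex}(c\,\ga)-1$, and the induction closes (note $J\ne\emptyset$ since $\be^\redex_1$ always has level one, so the length genuinely drops). This is essentially the heap-theoretic ``peel off the set of minimal letters'' argument, which is a natural self-contained replacement for the citation.
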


The above proposition tells that the level function
$\lambda_{\redex}$ {\it does} depend only on the equivalent class of
$\redex$ and hence we shall write $\lambda_{[\redex]}$ instead of
$\lambda_\redex$. In particular, the level function
$\lambda_{[\redez]}$ for $\redez \in [Q]$ is closely related to the
AR quiver $\Gamma_Q$.

\begin{proposition}\cite{B99} \label{Prop:levelF_adaptedAR}
For $Q$ and $\redez \in [Q]$, define a function
$\lambda_{\Gamma_Q}:\Phi^+\to \N$ in an inductive way:
$$ \alpha \mapsto \left\{
\begin{array}{ll}
1, & \text{ if $\alpha$ is a sink in $\Gamma_Q$}, \\
\max\{\lambda_{\Gamma_Q}(\beta) \, | \, \alpha\to \beta \text{ in }
\Gamma_Q\}+1, & \text{ otherwise. }
\end{array}
 \right.$$
Then we have $\lambda_{\Gamma_Q} =\lambda_{[\redez]}$.
\end{proposition}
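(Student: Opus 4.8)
The plan is to reinterpret both functions as longest–path counts and to match them by induction on the position $\pi_{\redez}(\alpha)$ for a fixed $\redez\in[Q]$. First I record that the inductive recipe for $\lambda_{\Gamma_Q}$ computes exactly the maximal number of vertices occurring on a directed path in $\Gamma_Q$ that starts at $\alpha$; this is well defined because $\Gamma_Q$ is acyclic, which is immediate from Proposition \ref{Prop:par_ord/ar} since a single arrow $\delta\to\epsilon$ is a path and therefore forces $\epsilon\prec_{[\redez]}\delta$. On the combinatorial side, unwinding Definition \ref{Def:lev,F} yields the recursion
\[ \lambda_{[\redez]}(\alpha)=1+\max\{\lambda_{[\redez]}(\gamma)\mid \pi_{\redez}(\gamma)<\pi_{\redez}(\alpha),\ (\gamma,\alpha)\neq0\}, \]
with the convention that the maximum over the empty set is $0$: a maximal admissible sequence ending at $\alpha$ is obtained by appending $\alpha$ to a maximal admissible sequence ending at its immediate predecessor, and distinctness is automatic since $\alpha$ has strictly largest position. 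It then suffices to prove the two inequalities between $\lambda_{\Gamma_Q}$ and $\lambda_{[\redez]}$.

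For $\lambda_{\Gamma_Q}(\alpha)\le\lambda_{[\redez]}(\alpha)$ I would take a longest directed path $\alpha=\delta_1\to\delta_2\to\cdots\to\delta_m$ in $\Gamma_Q$ and read it backwards. Each arrow $\delta_t\to\delta_{t+1}$ satisfies $\delta_{t+1}\prec_{[\redez]}\delta_t$, hence $\pi_{\redez}(\delta_{t+1})<\pi_{\redez}(\delta_t)$, by Proposition \ref{Prop:par_ord/ar}, and $(\delta_t,\delta_{t+1})\neq0$ because an arrow of $\Gamma_Q$ joins two roots whose residues are adjacent in $\Delta$. Thus $\delta_m,\delta_{m-1},\dots,\delta_1=\alpha$ is an admissible sequence of length $m$ in the sense of Definition \ref{Def:lev,F}, so $\lambda_{[\redez]}(\alpha)\ge m=\lambda_{\Gamma_Q}(\alpha)$. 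This direction needs neither induction nor the lemma below.

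The reverse inequality $\lambda_{[\redez]}(\alpha)\le\lambda_{\Gamma_Q}(\alpha)$ is the crux and rests on the following Lemma: \emph{if $\alpha,\beta\in\Phi^+$ satisfy $(\alpha,\beta)\neq0$, then $\alpha$ and $\beta$ are $\prec_{[\redez]}$-comparable, equivalently there is a directed path between them in $\Gamma_Q$.} Granting this, I argue by induction on $\pi_{\redez}(\alpha)$. If $\lambda_{[\redez]}(\alpha)=1$ there is nothing to prove since $\lambda_{\Gamma_Q}(\alpha)\ge1$ always. Otherwise let $\gamma^\ast$ realize the maximum in the recursion, so $\pi_{\redez}(\gamma^\ast)<\pi_{\redez}(\alpha)$ and $(\gamma^\ast,\alpha)\neq0$; by the Lemma together with $\pi_{\redez}(\gamma^\ast)<\pi_{\redez}(\alpha)$ we get $\gamma^\ast\prec_{[\redez]}\alpha$, hence a directed path $\alpha\to\cdots\to\gamma^\ast$ in $\Gamma_Q$ with at least two vertices. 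Concatenating it with a longest path out of $\gamma^\ast$ gives a path out of $\alpha$, so $\lambda_{\Gamma_Q}(\alpha)\ge 1+\lambda_{\Gamma_Q}(\gamma^\ast)$. Invoking the induction hypothesis $\lambda_{[\redez]}(\gamma^\ast)=\lambda_{\Gamma_Q}(\gamma^\ast)$, we conclude $\lambda_{[\redez]}(\alpha)=1+\lambda_{[\redez]}(\gamma^\ast)\le\lambda_{\Gamma_Q}(\alpha)$, which closes the induction and gives equality.

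The main obstacle is therefore the Lemma, namely that non-orthogonal positive roots are always comparable (note that the Proposition itself forces this, for otherwise a root orthogonal-separated from all its later neighbours could be a sink yet admit a length-two admissible sequence). I would prove its contrapositive---incomparable roots are orthogonal---using Auslander--Reiten duality: if the indecomposables $M,N$ with $\Dim M=\alpha$, $\Dim N=\beta$ are joined by no directed path in either direction then $\operatorname{Hom}(M,N)=\operatorname{Hom}(N,M)=0$, while $\operatorname{Ext}^1(M,N)\cong D\operatorname{Hom}(N,\tau M)=0$ and symmetrically $\operatorname{Ext}^1(N,M)=0$, since any path reaching $\tau M$ would extend through the mesh to $M$. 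As $(\alpha,\beta)$ is the symmetrization of the Euler form, $(\alpha,\beta)=\langle\alpha,\beta\rangle+\langle\beta,\alpha\rangle=0$. Alternatively one can argue entirely inside $\Z Q$, showing that the roots non-orthogonal to a given $\alpha$ are precisely those lying on the two sectional paths through $\pi(\alpha)$, each of which is a directed path and hence yields comparability; this keeps the argument within the $B(Q)$ and sectional-path framework already set up above.
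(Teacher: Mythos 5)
The paper itself gives no proof of this proposition (it is quoted from \cite{B99}); the closest in-house analogue is its proof of the generalization to arbitrary $\Upsilon_{[\redex]}$ (Proposition \ref{Prop:level_AR}), which reduces everything to two facts: an arrow forces non-orthogonality (Proposition \ref{the prop}) and absence of a path forces orthogonality (Proposition \ref{Lemma:converse}). Your argument has exactly this skeleton --- both functions are longest-chain counts, the easy inequality comes from arrows being non-orthogonal pairs, and all the weight rests on the converse lemma that path-disconnected roots are orthogonal --- so structurally you and the paper agree. Where you genuinely diverge is in how that key lemma is established: the paper's Proposition \ref{Lemma:converse} is purely combinatorial (it rearranges the reduced expression until the two roots sit in adjacent positions, whence $(\alpha,\beta)=(\alpha_{i_k},\alpha_{i_{l}})=0$), while you invoke Auslander--Reiten duality and the directedness of $\Gamma_Q$ to kill $\operatorname{Hom}$ and $\operatorname{Ext}^1$ in both directions and conclude via the symmetrized Euler form. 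For the adapted, simply-laced setting of this proposition your route is valid and shorter granted the standard facts, but, unlike the paper's combinatorial argument, it does not extend to non-adapted commutation classes or to non-simply-laced types, which is precisely what the paper needs later. Your recursion for $\lambda_{[\redez]}$ and the induction on $\pi_{\redez}(\alpha)$ are correct.

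Two caveats. First, the ``alternative'' argument in your last sentence is false as stated: the roots non-orthogonal to $\alpha$ are \emph{not} precisely those on the two sectional paths through $\pi(\alpha)$. In the quiver $\Gamma_Q$ of Example \ref{Ex:adapted AR quiver Aprime}, the roots $[2,3]$ and $[1]$ both have residue $1$, hence lie on no common sectional path, yet $([2,3],[1])=(\alpha_2+\alpha_3,\alpha_1)=-1\neq 0$ (they are of course still comparable, via $[2,3]\to[1,3]\to[1]$). Delete that remark or weaken it to one inclusion; the AR-duality proof you give first is the one that works. Second, in the easy inequality you justify $(\delta_t,\delta_{t+1})\neq 0$ merely by adjacency of the residues; strictly one needs $(\delta_t,\delta_{t+1})=-(\alpha_i,\alpha_j)$, which is Lemma \ref{lemma: position simple}(2) for type $A$ and Proposition \ref{the prop} in general, so a citation there would close that small gap.
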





Another closely related notion to the AR quiver $\Gamma_Q$ is compatible readings  of positive roots. To see the relation, we introduce a {\it compatible
reading} of $\Gamma_Q.$
A sequence $s_{i_1}, \cdots, s_{i_N}$ (resp. $i_1, \cdots, i_N$) of
simple reflections  (resp. indices) is called a {\it compatible
reading} of the AR quiver $\Gamma_Q$ if
\[\text{   whenever there is an arrow from $(i_q, n_q)$ to $(i_r, n_r)$ in $A(Q)\simeq\Gamma_{Q}$, read $s_{i_r}$ before $s_{i_q}$}.\]
According to Proposition \ref{Prop:par_ord/ar}, a compatible reading
of $\Gamma_Q$ gives a compatible reading of positive roots, in the
sense that $\alpha$ is read before $\beta$ if
$\alpha\prec_{[Q]}\beta$ for $\alpha, \beta\in \Phi^+$.

\begin{theorem} \cite{B99} \label{Thm:adapted_compatible reading}
Let $Q$ be a Dynkin quiver of finite type $A_n$, $D_n$, $E_n$. Then
any reduced expression of $w_0\in \W$ adapted to the quiver $Q$ can
be obtained by a compatible reading of the AR quiver $\Gamma_{Q}$.
\end{theorem}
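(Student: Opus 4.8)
The plan is to exhibit, for a reduced expression $\redez$ of $w_0$ adapted to $Q$, an explicit reading of $\Gamma_Q$ whose output sequence of simple reflections is exactly $\redez$, and then to verify that this reading satisfies the compatibility condition. By Proposition \ref{Prop:Q-W} every such $\redez$ lies in $[Q]$, so it induces the total order $<_{\redez}$ on $\Phi^+$ via the position function, placing $\beta^{\redez}_k = s_{i_1}\cdots s_{i_{k-1}}(\alpha_{i_k})$ at position $k$. Under the identification $A(Q)\simeq\Gamma_Q$ of Proposition \ref{Prop:IsomBtwQuivers}, the root $\beta^{\redez}_k$ is the vertex $(i_k,n_k)$ whose residue is $i_k$; since that correspondence does not depend on the chosen reduced expression in $[Q]$, the assignment ``position $k\mapsto$ vertex $\beta^{\redez}_k$'' is a bona fide reading of $\Gamma_Q$, and the sequence it outputs is precisely $s_{i_1},\dots,s_{i_N}$, i.e. $\redez$ itself. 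It therefore suffices to prove that this reading is compatible.

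The key step is to observe that $<_{\redez}$ refines the convex partial order $\prec_{[Q]}$. This is immediate from the definition of $\prec_{[Q]}$: since $\redez\in[Q]$, any relation $\alpha\prec_{[Q]}\beta$ forces $\alpha<_{\redez'}\beta$ for all $\redez'\in[Q]$, in particular $\alpha<_{\redez}\beta$. I would then feed in Proposition \ref{Prop:par_ord/ar}, which realizes $\prec_{[Q]}$ through paths in $\Gamma_Q$: any arrow $\beta\to\alpha$ in $\Gamma_Q$ yields $\alpha\prec_{[Q]}\beta$, hence $\alpha<_{\redez}\beta$. Translating back through the position function, if $\beta=(i_q,n_q)$ sits at position $q$ and $\alpha=(i_r,n_r)$ sits at position $r$, then $\alpha<_{\redez}\beta$ says $r<q$; that is, $s_{i_r}$ is read strictly before $s_{i_q}$. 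This is exactly the requirement in the definition of a compatible reading, and as it holds for every arrow of $\Gamma_Q$, the reading attached to $\redez$ is compatible. Reading off its residues reproduces $\redez$, which is the assertion of the theorem.

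The main obstacle lies less in the logical skeleton than in the bookkeeping that glues the two combinatorial models together: one must check that the residue attached to $\beta^{\redez}_k$ by the position function genuinely coincides with the first coordinate $i_k$ of the vertex $(i_k,n_k)$ in $A(Q)\simeq\Gamma_Q$, and that this labelling is independent of the chosen $\redez\in[Q]$ --- both of which are supplied by Proposition \ref{Prop:IsomBtwQuivers}. I would also note that the genuine mathematical content is packaged inside Proposition \ref{Prop:par_ord/ar}: if one wanted an argument independent of \cite{R96}, the implication $\alpha\prec_{[Q]}\beta\Rightarrow\alpha<_{\redez}\beta$ restricted to arrows would instead be proved directly by induction, peeling off the sink $i_1$ of $Q$ and passing to the reflected quiver $s_{i_1}(Q)$ through the reflection functor, so that $\alpha_{i_1}$ becomes a minimal vertex of $\Gamma_Q$ (a sink in the quiver) that is read first. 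This inductive sink-removal is where the real work would sit.
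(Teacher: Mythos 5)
Your argument is correct. Note, however, that the paper offers no proof of this statement to compare against: Theorem \ref{Thm:adapted_compatible reading} is recalled from \cite{B99} and stated without proof. What you have written is a legitimate self-contained derivation from the ingredients the paper does record: Proposition \ref{Prop:Q-W} places $\redez$ in $[Q]$; Proposition \ref{Prop:IsomBtwQuivers} guarantees that the vertex $\beta^{\redez}_k$ carries residue $i_k$ and that this labelling is independent of the representative of $[Q]$, so the enumeration $k\mapsto\beta^{\redez}_k$ is a reading of $\Gamma_Q$ whose output is $\redez$ itself; and the easy direction of Proposition \ref{Prop:par_ord/ar} (an arrow $\beta\to\alpha$ gives $\alpha\prec_{[Q]}\beta$, hence $\alpha<_{\redez}\beta$) is exactly the compatibility condition. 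The only caveat is that you are outsourcing the genuine content to Proposition \ref{Prop:par_ord/ar}, itself quoted from \cite{R96}; your closing remark correctly identifies that a from-scratch proof (as in \cite{B99}) would instead run the induction on sink-removal and reflection, which is where the real work lives.
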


\vskip 3mm

\section{Combinatorial AR-quivers and related convex partial orders} \label{Sec: Upsilon}

In this section, we shall construct a quiver which visualizes the
convex partial order $\prec_{[\redex]}$ for {\it any} $w$ of {\it
all} finite Weyl group $\W$ and its reduced expression $\redex$.

\subsection{Combinatorial AR-quivers}

Let us take  \[\redex=(s_{i_1}, s_{i_2}, s_{i_3},\cdots, s_{i_{\ell(w)}})\]
a reduced expression of an element $w\in \W$. Then we can label
$\Phi(w)$ as follows:
$$  \beta^\redex_k = s_{i_1}\cdots s_{i_{k-1}}(\al_{k}) \quad \text{ for } 1 \le k \le \ell(w).$$
Recall that the {\it residue} of $\beta^\redex_k$ is defined by $\text{res}(\beta^\redex_k)=i_k.$

\begin{algorithm} \label{Alg_AbsAR} 
The quiver $\Upsilon_{\redex}=(\Upsilon^0_{\redex},
\Upsilon^1_{\redex})$ associated to $\redex$ is constructed in the
following algorithm:
\begin{enumerate}
\item[{\rm (Q1)}] $\Upsilon_{\redex}^0$ consists of $\ell(w)$ vertices labeled by $\beta^\redex_1, \cdots, \beta^\redex_{\ell(w)}$.
\item[{\rm (Q2)}] There is an arrow from $\beta^\redex_k$ to $\beta^\redex_j$ if
$$ {\rm (i)} \  k>j, \ \ {\rm (ii)} \  d_{\Delta}(i_k,i_j)=1 \ \text{ and }  \ {\rm (iii)} \ \{\, t\, | \,  j<t<k, \, i_t=i_j \text{ or } i_k \}=\emptyset.$$
\item[{\rm (Q3)}] Assign the color $m_{jk}=-(\alpha_{i_j}, \alpha_{i_k})$ to each arrow $\beta^\redex_k\to \beta^\redex_j$ in {\rm (Q2)}; that is,
$\beta^\redex_k \xrightarrow{m_{jk}} \beta^\redex_j$.  Replace
$\xrightarrow{1}$ by $\rightarrow$,  $\xrightarrow{2}$ by
$\Rightarrow$ and  $\xrightarrow{3}$ by  $\Rrightarrow$.
\end{enumerate}
\end{algorithm}

We call the quiver $\Upsilon_{\redex}$ the {\it combinatorial
AR-quiver associated to $\redex$}.

\begin{remark} \label{rmk: note} \hfill
\begin{enumerate}
\item  By replacing labels $\beta^\redex_k$'s  with $i_k$'s, one can obtain the usual Hasse quiver. To compute $\beta^\redex_k$, we need lots of computations in general. 
\item In Proposition \ref{the prop} and Proposition \ref{Prop:meaning of number in arrow}, we will show that if two roots $\alpha$ and $\beta$ are connected by a sequence of arrows with the {\it same direction},
then the product of colors of the arrows corresponds to $(\alpha,\beta)$ (see the propositions for rigorous statements).
\end{enumerate}
\end{remark}

The following proposition follows from the construction of the quiver $\Upsilon_{\redex}$: 

\begin{proposition} \label{Prop:AR_class}
If two reduced expressions $\redex$ and $\redex'$ are commutation
equivalent then $\Upsilon_{\redex}=\Upsilon_{\redex'}$.
\end{proposition}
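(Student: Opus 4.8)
The plan is to reduce to a single elementary commutation and then track its effect on the data produced by Algorithm \ref{Alg_AbsAR}. Since by Definition \ref{Def:comm_equi} the relation $\sim$ is generated by elementary commutations, it suffices to treat the case where $\redex'$ is obtained from $\redex=(s_{i_1},\dots,s_{i_\ell})$ by swapping the entries in two adjacent positions $k$ and $k+1$ with $d_\Delta(i_k,i_{k+1})>1$; the general statement then follows by transitivity of equality. I write $\phi$ for the transposition of $k$ and $k+1$ on $\{1,\dots,\ell\}$, fixing every other position. Note that $d_\Delta(i_k,i_{k+1})>1$ gives $(\al_{i_k},\al_{i_{k+1}})=0$, so $s_{i_k}$ and $s_{i_{k+1}}$ commute and fix each other's simple root.

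First I would check that the two quivers have the same labelled vertex set. For $t\notin\{k,k+1\}$ the initial product $s_{i_1}\cdots s_{i_{t-1}}$ and the residue $i_t$ are unaffected by the swap (for $t>k+1$ one reorders the commuting pair $s_{i_k}s_{i_{k+1}}$ inside the product), so $\beta^\redex_t=\beta^{\redex'}_t$. At the two special positions, using $s_{i_k}(\al_{i_{k+1}})=\al_{i_{k+1}}$ and $s_{i_{k+1}}(\al_{i_k})=\al_{i_k}$ one computes $\beta^\redex_k=\beta^{\redex'}_{k+1}$ and $\beta^\redex_{k+1}=\beta^{\redex'}_k$. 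Thus the swap merely transposes the two roots at positions $k$ and $k+1$, i.e. $\beta^\redex_p=\beta^{\redex'}_{\phi(p)}$ for all $p$, and the residue of each root is unchanged, since $i_k$ travels with its root from position $k$ to position $k+1$ and likewise $i_{k+1}$ travels from $k+1$ to $k$. In particular $\Upsilon^0_\redex=\Upsilon^0_{\redex'}$ as labelled vertex sets.

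Next I would verify that the arrow sets agree under $\phi$, together with their colors. Colors cause no trouble: by (Q3) the color $m_{jk}=-(\al_{i_j},\al_{i_k})$ of an arrow depends only on the residues of its two endpoints, which were just shown to be preserved, so it is enough to match the underlying arrows of (Q2). I would argue by cases on how many endpoints lie in $\{k,k+1\}$. An arrow between $k$ and $k+1$ occurs in neither quiver, since (Q2)(ii) requires $d_\Delta(i_k,i_{k+1})=1$ whereas $d_\Delta(i_k,i_{k+1})>1$; note also that $\phi$ reverses the relative order of no pair other than $(k,k+1)$, so arrow directions are preserved in all remaining cases. If neither endpoint lies in $\{k,k+1\}$, an elementary observation shows the open position-interval between the endpoints contains both of $k,k+1$ or neither, so the multiset of intervening residues, and hence condition (Q2)(iii), is unchanged. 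The remaining case — one endpoint in $\{k,k+1\}$, say at residue $i_k$, and the other a position $q$ of residue $i_q$ — is where the interval of intervening positions gains or loses exactly one position, namely the partner position carrying residue $i_{k+1}$.

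The main point, and the only real obstacle, is to show that this extra position can never change the truth of (Q2)(iii). Here the hypothesis $d_\Delta(i_k,i_{k+1})>1$ is used a second time, crucially: an arrow with an endpoint of residue $i_k$ forces $d_\Delta(i_k,i_q)=1$ by (Q2)(ii), so $i_q$ is adjacent to $i_k$ in $\Delta$; since $i_{k+1}$ is \emph{not} adjacent to $i_k$, we get $i_{k+1}\neq i_q$, and trivially $i_{k+1}\neq i_k$. Hence the partner residue $i_{k+1}$ lies outside $\{i_k,i_q\}$ and is irrelevant to (Q2)(iii), whether it is added to or removed from the interval. Therefore the arrow condition holds between the endpoint and $q$ in $\redex$ if and only if it holds between the $\phi$-image endpoint and $q$ in $\redex'$; the symmetric computation (interchanging the roles of $i_k$ and $i_{k+1}$) handles an endpoint of residue $i_{k+1}$. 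Assembling the three cases gives $\Upsilon^1_\redex=\Upsilon^1_{\redex'}$ under $\phi$, which completes the single-move case and hence the proposition.
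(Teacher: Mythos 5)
Your argument is correct and complete: the reduction to a single elementary commutation, the check that the swap merely transposes the two labelled roots while preserving residues, and the three-case analysis of (Q2)(iii) using $d_\Delta(i_k,i_{k+1})>1$ twice are all sound. The paper offers no proof at all here — it simply asserts that the proposition "follows from the construction of the quiver $\Upsilon_{\redex}$" — so your write-up is just the detailed version of the direct verification the authors had in mind.
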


By Proposition \ref{Prop:AR_class}, we can define $\Upsilon_{[\redex]}$ for an equivalence class $[\redex]$ of $w \in \W$.


\begin{example} \label{nonadapted AR}
Let  $\redex=(s_1,s_2,s_3,s_5,s_4,s_3,s_1,s_2,s_3,s_5,s_4,s_3,s_1)$
of $A_5$. Then one can easily check that $\redex$ is {\it not}
adapted to {\it any} Dynkin quiver $Q$ of type $A_5$. According to
Algorithm \ref{Alg_AbsAR}, labels of vertices of the combinatorial
AR quiver $\Upsilon_{[\redex]}$ are
\begin{align*}
&(\beta^\redex_k \, | \, 1 \le k \le \ell(w)=13) \\ & \hspace{7ex} =
([1],[1,2],[1,3],[5],[1,5],[4,5],[2],[2,5],[2,3],[1,4],[2,4],[4],[3,5]).
\end{align*}
The quiver $\Upsilon_{[\redex]}$ is drawn as follows:
\begin{equation} \label{modi 1 complete prime}
 \scalebox{0.84}{\xymatrix@C=2ex@R=1ex{
1&& [3,5]\ar@{->}[drr] &&  && [2]\ar@{->}[drr] && && [1] \\
2&&  && [2,5] \ar@{->}[dr]\ar@{->}[urr] &&  && [1,2]\ar@{->}[urr]\\
3& [4] \ar@{->}[dr] && [2,3] \ar@{->}[ur] && [4,5]\ar@{->}[dr]&& [1,3]\ar@{->}[ur] \\
4&& [2,4]\ar@{->}[ur]\ar@{->}[drr]  && && [1,5]\ar@{->}[ur]\ar@{->}[drr] \\
5&&  && [1,4]\ar@{->}[urr] &&  && [5] }}
\end{equation}
\end{example}


\begin{example}
Let $\redez=(s_3,s_2, s_3, s_2, s_1, s_2, s_3, s_2, s_1)$ of $B_3$. 
The combinatorial AR quiver of $\redez$ is
$$\Upsilon_{[\redez]}=
 \scalebox{0.84}{\xymatrix@C=1ex@R=1ex{
1& \alpha_1 \ar@{->}[dr] &&&&\alpha_1 \shp 2\alpha_2 \shp 2\alpha_3\ar@{->}[dr]&&&&\\
2&& \alpha_1 \shp \alpha_2 \ar@{->}[dr] && \alpha_1 \shp \alpha_2 \shp 2\alpha_3\ar@{->}[ur] && \alpha_2\ar@{->}[dr] && \alpha_2 \shp 2\alpha_3\ar@{->}[dr]& \\
3&&& \alpha_1 \shp \alpha_2 \shp \alpha_3\ar@{->}[ur] &&&&\alpha_2
\shp \alpha_3\ar@{->}[ur] && \alpha_3 }}
$$
\end{example}

\begin{example} A combinatorial AR quiver is not necessarily connected. For example, let $\redex=(s_4,s_3, s_1)$ of $A_4$. Then
\[ \Upsilon_{[\redex]}=
 \scalebox{0.84}{\xymatrix@C=1ex@R=1ex{
1& \alpha_1 &&\\
2&&&\\
3& &\alpha_3+\alpha_4 \ar@{->}[dr]& \\
4&&&\alpha_4 }}
\]

\end{example}

\begin{example} \label{D4 non-adapted D-1}
Let $\redez = (s_1,s_2,s_3,s_1,s_2,s_4,s_1,s_2,s_3,s_1,s_2,s_4)$ of
$D_4$. Note that $\redez$ is not adapted to any Dynkin quiver of
type $D_4$. We can draw the combinatorial AR quiver
$\Upsilon_{[\redez]}$ as follows:
$$\scalebox{0.84}{\xymatrix@C=1ex@R=1ex{
1&&& \alpha_1 \shp \alpha_2 \shp \alpha_4\ar@{->}[dr] && \alpha_3 \ar@{->}[dr]&& \alpha_2 \ar@{->}[dr] && \alpha_1\\
2&& \alpha_2 \shp \alpha_4 \ar@{->}[ur]\ar@{->}[dr] && \alpha_1 \shp
\alpha_2 \shp \alpha_3 \shp \alpha_4 \ar@{->}[ur]\ar@{->}[ddr]
 && \alpha_2 \shp \alpha_3 \ar@{->}[ur]\ar@{->}[dr]&& \alpha_1 \shp \alpha_2 \ar@{->}[ur]\\
3& && \alpha_2 \shp \alpha_3 \shp \alpha_4 \ar@{->}[ur]&& && \alpha_1 \shp \alpha_2 \shp \alpha_3 \ar@{->}[ur] \\
4& \alpha_4 \ar@{->}[uur] &&  && \alpha_1 \shp 2\alpha_2 \shp
\alpha_3 \shp \alpha_4 \ar@{->}[uur]&& }}
$$
\end{example}

\begin{example}
Let $\redex=(s_1, s_2, s_1, s_2, s_1)$ of $G_2$. Then
$$ \Upsilon_{[\redex]}=
 \scalebox{0.84}{\xymatrix@C=2ex@R=1ex{
1& \alpha_1+3\alpha_2 \ar@3{->}[dr] && 2\alpha_1+3\alpha_2   \ar@3{->}[dr]  && \alpha_1 \\
2& &\alpha_1+2\alpha_2 \ar@3{->}[ur]  && \alpha_1+\alpha_2 \ar@3{->}[ur]  & \\
}}
$$
\end{example}

We showed that a combinatorial AR quiver is not necessarily
connected. The following proposition shows a property about
connectedness of a combinatorial AR quiver.

\begin{proposition} \label{Lem:sameresidue} For $w \in W$ and its reduced expression $\redex$ of $w$,
let $\beta^\redex_{k_1}$ and $\beta^\redex_{k_2}$ be two vertices in
$\Upsilon_{[\redex]}$. Suppose both of the two vertices have the same residue
$i$ and $k_1<k_2$. Then there is a path from
$\beta^\redex_{k_2}$ to $\beta^\redex_{k_1}$ in
$\Upsilon_{[\redex]}$.
\end{proposition}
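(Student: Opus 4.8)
The plan is to argue by strong induction on the length $\ell=\ell(w)$, proving the statement for reduced expressions of all elements of all finite Weyl groups simultaneously. The engine of the induction is that $\Upsilon_{[\redex]}$ behaves well under passing to subwords. If $\redex_-=(s_{i_1},\dots,s_{i_{\ell-1}})$ is the prefix obtained by deleting the last letter, then $\beta^{\redex_-}_k=\beta^\redex_k$ for $k\le \ell-1$, and the conditions in (Q2) only refer to positions and residues $\le \ell-1$; hence $\Upsilon_{[\redex_-]}$ is exactly the full subquiver of $\Upsilon_{[\redex]}$ on $\{\beta^\redex_1,\dots,\beta^\redex_{\ell-1}\}$. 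Dually, deleting the first letter gives the reduced word $\redex^\flat=(s_{i_2},\dots,s_{i_\ell})$ with $\beta^{\redex^\flat}_m=s_{i_1}(\beta^\redex_{m+1})$; since (Q2) depends only on residues and the relative order of positions, $\Upsilon_{[\redex^\flat]}$ is isomorphic, as a residue-labelled quiver, to the full subquiver of $\Upsilon_{[\redex]}$ on $\{\beta^\redex_2,\dots,\beta^\redex_\ell\}$. Using these two facts I would reduce to a single essential case: if $k_2\le\ell-1$ I apply the inductive hypothesis inside $\Upsilon_{[\redex_-]}$, and if $k_1\ge 2$ I apply it inside $\Upsilon_{[\redex^\flat]}$. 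In the remaining situation $k_1=1$ and $k_2=\ell$; if some intermediate position $m$ carries residue $i$, I split the pair into $(1,m)$ and $(m,\ell)$ and concatenate the paths obtained from the previous two sub-cases. Hence it remains to treat $k_1=1$, $k_2=\ell$ with $i_1=i_\ell=i$ and $i$ occurring only at the two endpoints.

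In this essential case I first note that $J=\{m:1<m<\ell,\ d_{\Delta}(i,i_m)=1\}$ is nonempty: otherwise every interior letter would commute with $s_i$, and moving the final $s_i$ to the front would exhibit $\redex$ as non-reduced, a contradiction. Let $q=\max J$ and $i'=i_q$. The arrow $\beta^\redex_\ell\to\beta^\redex_q$ exists by (Q2): (i) $\ell>q$, (ii) $d_{\Delta}(i,i')=1$, and (iii) holds because $i$ does not occur in the interior while any interior occurrence of the neighbour residue $i'$ lies in $J$ and is therefore $\le q$, so no residue $i$ or $i'$ occurs strictly between $q$ and $\ell$. In particular $q$ is the last occurrence of $i'$ in all of $\redex$; let $c$ be its first occurrence, so $c\ge 2$ (as $i'\neq i$) and $c=q$ if $i'$ occurs only once. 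Applying the inductive hypothesis to the strictly shorter prefix $(s_{i_1},\dots,s_{i_q})$ with the repeated residue $i'$ yields a path $\beta^\redex_q\to\cdots\to\beta^\redex_c$. Finally the arrow $\beta^\redex_c\to\beta^\redex_1$ exists, again by (Q2): $c>1$, $d_{\Delta}(i',i)=1$, and no residue $i$ (none in the interior) or $i'$ (none before its first occurrence $c$) occurs strictly between $1$ and $c$. Concatenating $\beta^\redex_\ell\to\beta^\redex_q\to\cdots\to\beta^\redex_c\to\beta^\redex_1$ produces the desired path.

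The steps that are essentially bookkeeping are the two subword-restriction isomorphisms and the nonemptiness of $J$. The crux — and the part I expect to require the most care — is the descent in the essential case: the natural first hop $\beta^\redex_\ell\to\beta^\redex_q$ lands on a vertex of residue $i'\neq i$, so one cannot connect it to $\beta^\redex_1$ by invoking the proposition for the residue $i$. The key idea is that $q=\max J$ is \emph{forced} to be the last occurrence of its own residue $i'$, which simultaneously guarantees the first arrow and allows me to recurse via the same-residue statement applied to $i'$ on a shorter word, descending to the first occurrence of $i'$, from which a single arrow reaches the residue-$i$ sink $\beta^\redex_1$. I would take care to verify that choosing $\max J$ (rather than $\min J$) is genuinely needed, since examples show that a vertex reachable from $\beta^\redex_\ell$ cannot in general reach $\beta^\redex_1$ unless this last-occurrence property is in force.
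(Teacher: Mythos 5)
Your proof is correct and follows essentially the same route as the paper's: both arguments locate a residue $i'$ adjacent to $i$ occurring between the two positions (nonempty by reducedness), draw the two boundary arrows into the last occurrence and out of the first occurrence of $i'$, and recurse on the same-residue statement for $i'$. Your version merely packages the paper's informal ``inductively reduce to case (i)'' as a strong induction on $\ell(w)$ with explicit subword-restriction lemmas, which is a tidier but not substantively different argument.
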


\begin{proof}

It is enough to show when there is no vertex $\beta_{k}^\redex$ with the residue $i$ such that $k_1<k<k_2$.

Suppose there is no index $i'\in I$ which satisfies the following property:
\begin{equation} \label{eq: condition}
\text{there is a vertex $\beta^\redex_{k_3}\in \Upsilon_{[\redex]}^0$ with the residue $i'$ for some $k_1<k_3<k_2$ and $d_\Delta(i,i')=1$.}
\end{equation}
Then it is a contradiction to the fact that $\redex$ is a reduced
expression. Hence there is $i'\in I$ satisfying \eqref{eq: condition}. \vskip 1mm
Take $i'\in I$ which satisfies (1) and (2) above.
\begin{enumerate}[(i)]
\item If there is a unique vertex $\beta^\redex_{k_3}\in \Upsilon_{[\redex]}^0$ with the residue  $i'$-th such that $k_1<k_3< k_2$ then there is a path from
$\beta^\redex_{k_2}$ to  $\beta^\redex_{k_1}$ via
$\beta^\redex_{k_3}$.
\item  If there are more than one vertices $k_3, k_4, \cdots$ with the residue $i'$ such that $k_1<k_3, k_4, \cdots <k_2$. Without loss of generality,
let us assume there are arrows from $\beta^\redex_{k_2}$ to
$\beta^\redex_{k_4}$ and from $\beta^\redex_{k_3}$ to
$\beta^\redex_{k_1}$ in $\Upsilon_{[\redex]}$. Then it is enough
show there is a path from $\beta^\redex_{k_4}$ to
$\beta^\redex_{k_3}$ in $\Upsilon_{[\redex]}$. Again, we can assume
that  there is no vertex $\beta^\redex_{l}$ with the residue $i'$
such that $k_3<l<k_4$. Inductively, we can reduce the situation to
the case (i).
\end{enumerate}
Hence we proved that there is a path from $\beta^\redex_{k_2}$ to
$\beta^\redex_{k_1}$.
\end{proof}

\begin{proposition}
Let $\redex$ be a reduced expression consisting of simple
reflections $\{s_{i_1}, \cdots, s_{i_k}\}$. The subdiagram of
$\Delta$ consisting of the set of indices $\{i_1, \cdots, i_k\}$  is
connected if and only if $\Upsilon_{[\redex]}$ is connected.
\end{proposition}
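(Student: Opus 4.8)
The plan is to read both implications off a single structural fact: by the arrow rule (Q2)(ii), an arrow $\beta^\redex_a \to \beta^\redex_b$ can occur only when its endpoint residues satisfy $d_\Delta(i_a,i_b)=1$, so every arrow of $\Upsilon_{[\redex]}$ lies over an edge of the subdiagram of $\Delta$ supported on $J:=\{i_1,\dots,i_k\}$. I would fix this notation once and for all, and for each $i\in J$ write $V_i\subseteq\Upsilon_{[\redex]}^0$ for the set of vertices of residue $i$; since $i\in J$ means $s_i$ actually occurs in $\redex$, each $V_i$ is nonempty. Throughout, ``connected'' refers to the underlying undirected graph.

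For the implication that $\Upsilon_{[\redex]}$ connected forces $\Delta|_J$ connected, I would argue by contrapositive. If $\Delta|_J$ is disconnected, I can partition $J=J_1\sqcup J_2$ into nonempty pieces with $d_\Delta(i,i')>1$ for all $i\in J_1$ and $i'\in J_2$. By the key observation above, no arrow can join a vertex of $\bigcup_{i\in J_1}V_i$ to a vertex of $\bigcup_{i'\in J_2}V_{i'}$; as both unions are nonempty, $\Upsilon_{[\redex]}$ breaks into at least two parts and is disconnected. This direction is essentially pure bookkeeping.

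For the converse, I would exploit Proposition \ref{Lem:sameresidue} to reduce to adjacent residues. First, for each fixed $i\in J$ the set $V_i$ lies in a single component: any two of its vertices have positions $k_1<k_2$ with common residue $i$, so the proposition supplies a path from $\beta^\redex_{k_2}$ to $\beta^\redex_{k_1}$, whence all of $V_i$ sits in one component. It then remains to glue these clusters. The central step is to show that $d_\Delta(i,i')=1$ with $i,i'\in J$ forces $V_i$ and $V_{i'}$ into the same component. For this I would pass to the subword of $\redex$ consisting of positions with residue in $\{i,i'\}$; since both $s_i$ and $s_{i'}$ occur, this subword uses both letters, so it contains two consecutive (within the subword) positions $p<q$ with $i_p\neq i_q$. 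These meet all of (Q2): $q>p$, $d_\Delta(i_q,i_p)=1$, and consecutiveness guarantees no position strictly between $p$ and $q$ has residue $i$ or $i'$, which is exactly condition (Q2)(iii). Hence there is an arrow $\beta^\redex_q\to\beta^\redex_p$ joining $V_i$ and $V_{i'}$. Finally, connectedness of $\Delta|_J$ gives, for any two residues of $J$, a chain of pairwise-adjacent residues in $J$; propagating the two preceding observations along such a chain places all clusters in one component, so $\Upsilon_{[\redex]}$ is connected.

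The routine portions are the two ``arrows lie over edges'' steps. The place that needs genuine care, and which I expect to be the main obstacle, is the bridge between adjacent clusters: one must be sure that a letter change $i\to i'$ really occurs in the restricted subword \emph{and} that this particular transition satisfies (Q2)(iii). This is precisely where the hypothesis that both $s_i$ and $s_{i'}$ truly appear in $\redex$ is used, together with the elementary pigeonhole observation that a word over a two-letter alphabet containing both letters must change letters at some adjacent pair.
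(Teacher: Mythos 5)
Your proof is correct. The paper states this proposition without proof, but the evidently intended argument is exactly the one you give: the forward direction is immediate from condition (ii) of (Q2), and the converse combines Proposition \ref{Lem:sameresidue} (all vertices of a fixed residue lie in one component) with the bridge between adjacent residues $i,i'$ obtained from two consecutive occurrences in the subword of positions with residue in $\{i,i'\}$ --- your check that this pair satisfies (Q2)(iii) is the one point requiring care, and you handle it correctly.
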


Recall that the level function $\lambda_{[\redex]}$ is defined for
any reduced expression $\redex\in W$ of any finite type. In the
adapted cases (Definition \ref{Def:lev,F} and Proposition
\ref{Prop:levelF_adaptedAR}), the level function $\lambda_{[Q]}$ is
visualized by $\Gamma_{Q}$. More precisely,
\begin{itemize}
\item the existence of an arrow between $\al$ and $\beta$ in $\Gamma_Q$ implies $(\al,\be) \ne 0$,
\item $\lambda_{[Q]}(\al)$ is the length of the longest path in $\Gamma_Q$ starting from $\al$.
\end{itemize}
Now we shall prove that $\Upsilon_{[\redex]}$ plays the roles of
$\Gamma_Q$ for {\it any} $\redex$ of $w$ in {\it any} finite Weyl
group of $W$.

\vskip 2mm

Now we introduce paths in combinatorial AR quivers which have a particular property.

\begin{definition}
For a combinatorial AR quiver $\Upsilon_{[\redex]}$, let us choose positive roots $\alpha, \beta \in \Phi(w)$ which are connected by a path.  The smallest number of arrows consisting a path between $\alpha$ and $\beta$ is called the {\it distance} between $\alpha$ and $\beta$ in $\Upsilon_{[\redex]}$ and denote it by $d_{[\redex]}(\alpha,\beta).$
\end{definition}

\begin{definition} \label{Def:sectional}
Consider a path $P$ in a combinatorial AR quiver $\Upsilon_{[\redex]}$. If a pair of roots $\alpha, \beta$ in $P$ whose residues are $i$ and $j$ satisfies
\[ d_{[\redex]}(\alpha,\beta)= d_{\Delta}(i,j)\]
then the path is called a {\it sectional path}.
\end{definition}

When $[\redex]=[Q]$ for some Dynkin quiver $Q$ of type ADE, the above definition coincides with Definition \ref{def: sectional path}.

\begin{example}
In Example \ref{nonadapted AR}, $[2,4]$ and $[2]$  whose residues are $4$ and $1$ lie in the sectional path:
$$ [2,4]_4 \to [2,4]_3 \to [2,5]_2 \to [2]_1$$
Here each subindex $_{i}$ denotes the residue for its vertex.
\end{example}

\begin{remark} \cite[Section 3]{S14}
In the theory of AR quivers for the path algebra $\C Q$, sectional paths provide information on $\dim(M,N)$ and ${\rm Ext}^1(M,N)$ for $M,N \in \Ind \, Q$.
\end{remark}

\begin{proposition} \label{the prop}
Let $\alpha$ and $\beta$ have residues $i$ and $j$ in the combinatorial Auslander-Reiten quiver $\Upsilon_{[\redex]}$. If $d_{[\redex]}(\alpha,\beta)=1$ then
 we have $(\alpha, \beta)=-(\alpha_{i}, \alpha_{j})> 0$.
\end{proposition}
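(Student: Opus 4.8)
The plan is to fix an orientation for the single arrow, say $\alpha = \beta^\redex_b \to \beta = \beta^\redex_a$ with $b > a$, so that $i = \res(\alpha) = i_b$ and $j = \res(\beta) = i_a$; the opposite orientation needs no separate treatment since both $(\ ,\ )$ and the pairing of simple roots are symmetric. By condition (Q2) of Algorithm \ref{Alg_AbsAR} the existence of this arrow gives $d_\Delta(i_a,i_b)=1$ together with the gap condition $\{t : a<t<b,\ i_t \in \{i_a,i_b\}\}=\emptyset$. The positivity $-(\alpha_i,\alpha_j)>0$ is then immediate: $i\ne j$ are adjacent in $\Delta$, and distinct simple roots satisfy $(\alpha_i,\alpha_j)\le 0$ with equality exactly for non-adjacent indices. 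So the whole content lies in the equality $(\alpha,\beta) = -(\alpha_{i_b},\alpha_{i_a})$.

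For this I would first use $W$-invariance of $(\ ,\ )$ to strip the common prefix. Applying $(s_{i_1}\cdots s_{i_{a-1}})^{-1}$ to both arguments cancels the shared initial segment and reduces the problem to
\[ (\alpha,\beta) = \bigl(s_{i_a}s_{i_{a+1}}\cdots s_{i_{b-1}}(\alpha_{i_b}),\ \alpha_{i_a}\bigr). \]
Setting $\gamma = s_{i_{a+1}}\cdots s_{i_{b-1}}(\alpha_{i_b})$ and using $(s_{i_a}x,\alpha_{i_a}) = (x, s_{i_a}\alpha_{i_a}) = -(x,\alpha_{i_a})$, this rewrites as $(\alpha,\beta) = -(\gamma,\alpha_{i_a})$. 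It therefore remains to prove $(\gamma,\alpha_{i_a}) = (\alpha_{i_b},\alpha_{i_a})$.

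The key step analyzes the root $\gamma$. Since $\redex$ is reduced, its contiguous subword $s_{i_{a+1}}\cdots s_{i_b}$ is reduced, so by \eqref{eq: cahracterization} the element $\gamma$ is the last member of $\Phi(s_{i_{a+1}}\cdots s_{i_b})$ and in particular a \emph{positive} root. The gap condition says none of the intermediate indices $i_{a+1},\dots,i_{b-1}$ equals $i_a$ or $i_b$; as each reflection $s_k$ alters only the $\alpha_k$-coordinate, the $\alpha_{i_b}$-coordinate of $\gamma$ stays $1$ and the $\alpha_{i_a}$-coordinate stays $0$. Now I would invoke that a positive root has connected support together with the tree structure of the Dynkin diagram: if some neighbor $k\ne i_b$ of $i_a$ also lay in $\mathrm{supp}(\gamma)$, then $i_b$ and $k$ would be joined by a path inside $\mathrm{supp}(\gamma)$, and adjoining $i_a$ (adjacent to both) would produce a cycle, which is impossible in a tree. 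Hence $i_b$ is the only neighbor of $i_a$ meeting $\mathrm{supp}(\gamma)$. Expanding $(\gamma,\alpha_{i_a})$ in simple-root coordinates, only the $i_a$-coordinate (which is $0$) and the neighbor coordinates contribute, and among the neighbors only $i_b$ survives with coefficient $1$; this yields $(\gamma,\alpha_{i_a}) = (\alpha_{i_b},\alpha_{i_a})$ and completes the argument.

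I expect the connectivity step to be the main obstacle, in the sense that it is the only non-mechanical part: one must legitimately apply "positive roots have connected support" to $\gamma$ (which is why establishing that $\gamma$ is a genuine positive root, via reducedness of the contiguous subword, is essential), and it is precisely the acyclicity of the Dynkin diagram that forbids a second neighbor of $i_a$ in the support. The remaining ingredients — prefix cancellation, the sign extracted from $s_{i_a}$, and the bookkeeping of the $i_a$- and $i_b$-coordinates — are routine.
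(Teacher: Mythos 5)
Your argument is correct, and it is genuinely different from the one in the paper. The paper never analyzes the intermediate root $\gamma$ at all: instead it reduces to the trivial case by showing that $\alpha$ and $\beta$ can be brought into \emph{consecutive positions} in some $\redex'\in[\redex]$. Concretely, it splits the indices strictly between the positions of $\alpha$ and $\beta$ into those lying on a path to $\alpha$ in $\Upsilon_{[\redex]}$ and the rest, argues (using that the arrow $\beta\to\alpha$ is the unique path from $\beta$ to $\alpha$, by acyclicity of $\Delta$) that the latter commute past $\alpha$ to the left and the former commute past $\beta$ to the right, and then reads off $(\alpha,\beta)=(-\alpha_{i_k},\alpha_{i_l})$ from two adjacent letters. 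Your route avoids all manipulation of commutation classes: after stripping the common prefix by $W$-invariance and extracting one sign from $s_{i_a}$, you control $(\gamma,\alpha_{i_a})$ for $\gamma=s_{i_{a+1}}\cdots s_{i_{b-1}}(\alpha_{i_b})$ by the gap condition in (Q2) of Algorithm \ref{Alg_AbsAR} (which pins the $\alpha_{i_a}$- and $\alpha_{i_b}$-coordinates of $\gamma$ at $0$ and $1$), the connectedness of the support of the positive root $\gamma$, and the tree structure of $\Delta$. Each approach buys something: yours is more elementary and self-contained, and isolates the acyclicity of $\Delta$ as the real reason the formula holds; the paper's yields as a by-product the adjacency statement ``$\alpha,\beta$ are consecutive in some $\redex'\in[\redex]$,'' which it reuses immediately afterwards (in the proof of Proposition \ref{Prop:meaning of number in arrow}, and in the parallel argument for Proposition \ref{Lemma:converse} and Corollary \ref{cor: imcom adja}), so if you adopted your proof you would still need that adjacency fact separately elsewhere.
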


\begin{proof}
Take a reduced expression $\redex=(s_{i_1}, \cdots, s_{i_{\ell(w)}})
\in [\redex]$ and  denote $\alpha=\beta^\redex_k$ and
$\beta=\beta^\redex_l$ for $1\leq k<l\leq {\ell(w)}$. Then the arrow
is directed from $\beta$ to $\alpha$. If $l=k+1$, then our assertion
follows from the formula below:
$$(\alpha, \beta)=(s_{i_1} \cdots s_{i_{k-1}}(\alpha_{i_k}), s_{i_1} \cdots s_{i_{k}}(\alpha_{i_l}))=(-\alpha_{i_k}, \alpha_{i_l}).$$

Assume that $l > k+1$ and set $\redex_{k \le \cdot \le l} \seteq
(s_{i_k}, \ldots, s_{i_l})$.  It is enough to show that there exists
a reduced expression $\redex' \in [\redex]$ such that
$\be^{\redex'}_{k'}=\al$ and $\be^{\redex'}_{k'+1}=\be$ for some
$k'\in\{1, \cdots, \ell (w)-1\}$.

Observe that the following property is followed by the algorithm of
combinatorial AR quivers
\[ \{\, i_t \,|\,  k<t<l, \,  i_t= i\}= \{\, i_t \,|\,  k<t<l, \,  i_t= j\}=\emptyset.\]
Hence the direct path from $\beta$ to $\alpha$ is the only path
starting from $\beta$ to $\alpha$. Otherwise, we get a loop in
$\Delta$ as a consequence, which is a contradiction.

Now let
\[ P=\left\{ a_t \in \N \left|\,
\begin{array}{l}
{\rm (i)}\  k<a_t<l,\, t=1, \cdots, m, \, {\rm (ii)}\ a_1<a_2<\cdots<a_m, \\
{\rm (iii)}\text{ each } \beta_{a_t}^\redex \text{ is on a path to
$\alpha$ in $\Upsilon_{[\redex]}$}.
\end{array}
\right. \right\}  \] and let $P^c= \{ k, k+1, \cdots, l\} \backslash
P=\{b_1, b_2, \cdots, b_{(l-k+1)-m}\}$ where $b_1<b_2<\cdots
<b_{(l-k+1)-m}.$ By our observation, there is no path from
$\beta^\redex_{b_{t'}}$ to $\beta^\redex_{a_t}$ for any $t=1,
\cdots, m$ and $t'=1, \cdots, l-k-m+1.$

Hence $\beta_{b_1}^\redex$ is not connected with any of vertices in
$\{\, \beta_{i}^\redex \, | \, k\leq i <b_1\,\} \subset  \{ \,
\beta_{a_t}^\redex\, | \, t=1, \cdots, m\, \} $ and
\[\redex_{k\leq \cdot \leq l}= (s_{i_k}, \cdots,  s_{i_{{b_1}-1}},s_{i_{b_1}}, s_{i_{{b_1}+1}},\cdots, s_{i_l})\sim (s_{i_{b_1}}, s_{i_k}, \cdots, s_{i_{{b_1}-1}},s_{i_{{b_1}+1}},\cdots,  s_{i_l}).\]
Inductively, we can do the same thing with $b_2, \cdots,
b_{(l-k+1)-m}$ and finally get the following equivalent reduced
expression to $\redex_{k\leq \cdot \leq l}$:
\[ \redex_{k\leq \cdot \leq l}\sim (s_{i_{b_1}}, \cdots, s_{i_{b_{(l-k+1)-m}}}, s_{i_k}, s_{i_{a_1}}, \cdots, s_{i_{a_m}}, s_{i_l}).\]
Since $\beta_{a_m}^\redex$ is not connected to $\beta$, we have
$s_{i_{a_m}} s_{i_l}=s_{i_l}s_{i_{a_m}}.$ Hence
\[ \redex_{k\leq \cdot \leq l}\sim (s_{i_{b_1}}, \cdots, s_{i_{b_{(l-k+1)-m}}}, s_{i_k}, s_{i_{a_1}}, s_{i_{a_{m-1}}}, \cdots, s_{i_l},  s_{i_{a_m}}).\]
Inductively, we get
\[ \redex_{k\leq \cdot \leq l}\sim (s_{i_{b_1}}, \cdots, s_{i_{b_{(l-k+1)-m}}}, s_{i_k}, s_{i_l}, s_{i_{a_1}}, \cdots, s_{i_{a_m}} ).\]
Let  $\redex'=(s_{i'_1}, \cdots, s_{i'_{\ell(w)}})$ have the form
\[\redex'=(s_{i_1}, \cdots, s_{i_{k-1}}, s_{i_{b_1}}, \cdots, s_{i_{b_{(l-k+1)-m}}}, s_{i_k}, s_{i_l}, s_{i_{a_1}}, \cdots, s_{i_{a_m}}, s_{i_{l+1}}, \cdots, s_{i_{\ell(w)}}).\]
Then $s_{i'_{l-m+1}}=s_{i_k}$  (resp. $s_{i'_{l-m+2}}=s_{i_l}$) and
$\beta^{\redex'}_{l-m+1}=\beta^\redex_k=\alpha$ (resp.
$\beta^{\redex'}_{l-m+2}=\beta^\redex_l=\beta$). Hence we proved our
assertion by setting $k'=l-m+1.$
\end{proof}

\begin{proposition}  \label{Prop:meaning of number in arrow}
Let $\alpha$ and $\beta$ have residues $i=i_0$ and $j=i_k$ in $\Upsilon_{[\redex]}$.
If $\alpha$ and $\beta$ are in a sectional path
$$ \beta=\gamma_k \xrightarrow{{m_{i_{k-1},i_k}}}  \gamma_{k-1} \xrightarrow{{m_{i_{k-2},i_{k-1}}}}  \cdots
\xrightarrow{\hspace*{0.7cm}} \gamma_{1} \xrightarrow{m_{i_0,i_{1}}} \gamma_{0}=\alpha $$
in $\Upsilon_{[\redex]}$, then we have
\begin{equation} \label{Eqn:2.3}
(\alpha, \beta)  = \left\{
\begin{array}{ll}
\displaystyle \prod_{t=1}^{k-1}  2^{\delta_{3, i_t}}  \prod_{t=0}^{k-1} m_{i_t,i_{t+1}} & \text{ for Type $F_4$}, \\
\displaystyle \prod_{t=0}^{k-1} m_{i_t,i_{t+1}}  & \text{ otherwise},
 \end{array}
 \right.
\end{equation}
where $i_t$ is the residue of $\gamma_t$ and $m_{a,b} \seteq -(\alpha_a,\alpha_b)$ for $a,b \in I$ (Algorithm \ref{Alg_AbsAR}). Hence
\[ (\alpha, \beta)>0.\]
\end{proposition}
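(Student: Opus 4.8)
The plan is to reduce $(\alpha,\beta)=(\gamma_0,\gamma_k)$ to an inner product of simple roots twisted by a chain of reflections, and then to evaluate it by an inductive use of the reflection formula, keeping careful track of the root lengths so that the $F_4$ correction emerges. First I would extract the purely combinatorial content of the sectional hypothesis. Since $d_{[\redex]}(\gamma_t,\gamma_{t+1})=1=d_\Delta(i_t,i_{t+1})$ for consecutive roots while $d_{[\redex]}(\gamma_0,\gamma_k)=k=d_\Delta(i_0,i_k)$, and $\Delta$ is a tree, the residues $i_0,i_1,\dots,i_k$ are pairwise distinct and form a geodesic; hence $d_\Delta(i_s,i_t)=|s-t|$, so that $(\alpha_{i_s},\alpha_{i_t})=0$ whenever $|s-t|\ge 2$, while $(\alpha_{i_t},\alpha_{i_{t+1}})=-m_{i_t,i_{t+1}}<0$.

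The main step, and the one I expect to be the principal obstacle, is to produce a reduced expression $\redex'=(s_{i'_1},\dots,s_{i'_{\ell(w)}})\in[\redex]$ in which the path roots occur \emph{consecutively}: $\gamma_t=\beta^{\redex'}_{c+t}$ for $t=0,\dots,k$ and some $c$, the corresponding residues being exactly $i_0,\dots,i_k$. This extends to the whole path the reduction carried out in the proof of Proposition \ref{the prop} (which is the case $k=1$). I would argue by induction on $k$: assuming $\gamma_0,\dots,\gamma_{k-1}$ already occupy consecutive positions $c,\dots,c+k-1$, I would commute the letter producing $\gamma_k$ leftward into position $c+k$. The arrow $\gamma_k\to\gamma_{k-1}$ guarantees, via (Q2) of Algorithm \ref{Alg_AbsAR}, that no residue $i_{k-1}$ or $i_k$ occurs strictly between them, and the sectional (geodesic) property is exactly what rules out any other neighbour of $i_k$ occurring in between that could block the commutations or create a shorter competing path. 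Granting this, set $w=s_{i'_1}\cdots s_{i'_{c-1}}$ and $v_t=s_{i_0}s_{i_1}\cdots s_{i_{t-1}}$ (so $v_0=e$), giving $\gamma_t=wv_t\alpha_{i_t}$; by $W$-invariance of $(\,\cdot\,,\,\cdot\,)$,
\[
(\alpha,\beta)=(\gamma_0,\gamma_k)=(\alpha_{i_0},\,s_{i_0}s_{i_1}\cdots s_{i_{k-1}}\alpha_{i_k}).
\]

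To evaluate this pairing I would set $\sigma_t=s_{i_t}s_{i_{t+1}}\cdots s_{i_{k-1}}\alpha_{i_k}$, so that $\sigma_k=\alpha_{i_k}$ and $\sigma_t=s_{i_t}(\sigma_{t+1})$. An easy induction, using $(\alpha_{i_s},\alpha_{i_t})=0$ for $|s-t|\ge 2$, shows that $\sigma_t$ is supported on $\alpha_{i_t},\dots,\alpha_{i_k}$ and that the coefficient $c_t$ of $\alpha_{i_t}$ in $\sigma_t$ satisfies $c_t=\dfrac{2\,m_{i_t,i_{t+1}}}{(\alpha_{i_t},\alpha_{i_t})}\,c_{t+1}$ with $c_k=1$; moreover the coefficient of $\alpha_{i_1}$ in $\sigma_0$ is unchanged after step $t=1$, hence equals $c_1$. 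Pairing $\sigma_0$ against $\alpha_{i_0}$ and again invoking the geodesic vanishing leaves only the $\alpha_{i_0}$- and $\alpha_{i_1}$-terms, which combine to give
\[
(\alpha,\beta)=m_{i_0,i_1}\,c_1=\Big(\prod_{t=0}^{k-1}m_{i_t,i_{t+1}}\Big)\prod_{t=1}^{k-1}\frac{2}{(\alpha_{i_t},\alpha_{i_t})}.
\]

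It then remains to identify the correction factor $\prod_{t=1}^{k-1}\frac{2}{(\alpha_{i_t},\alpha_{i_t})}$. An intermediate residue $i_t$ with $1\le t\le k-1$ is never a leaf of $\Delta$, so I would check type by type which non-leaf simple roots fail to satisfy $(\alpha_{i_t},\alpha_{i_t})=2$ in the Bourbaki normalization. In all simply-laced types every root has square length $2$; in $B_n$ and $C_n$ the exceptional-length node is an endpoint of $\Delta$ and thus never intermediate; and $G_2$ admits no intermediate node at all, so the factor is $1$, yielding the ``otherwise'' case. In type $F_4$ the unique non-leaf short root is $\alpha_3$, with $(\alpha_3,\alpha_3)=1$ while the other possible intermediate node $\alpha_2$ has $(\alpha_2,\alpha_2)=2$; hence the factor becomes $\prod_{t=1}^{k-1}2^{\delta_{3,i_t}}$, exactly as stated. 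Finally, positivity $(\alpha,\beta)>0$ is immediate, since each $m_{i_t,i_{t+1}}>0$ and the correction factor is a product of positive numbers. The delicate part is the consecutivity reduction of the second step; once it is in place, the remaining computation is routine.
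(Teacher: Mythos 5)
Your proposal is correct and follows essentially the same route as the paper's proof: reduce $(\alpha,\beta)$ to $(\alpha_{i_0},\,s_{i_0}s_{i_1}\cdots s_{i_{k-1}}\alpha_{i_k})$ via a commutation-equivalent reduced expression in which the path roots appear consecutively (the paper obtains this by invoking Proposition \ref{the prop}, which you rightly flag as the step needing an inductive extension from $k=1$ to general $k$), then evaluate the pairing using the geodesic vanishing $(\alpha_{i_s},\alpha_{i_t})=0$ for $|s-t|\ge 2$, and finally identify the length-correction factor $\prod_{t=1}^{k-1}2/(\alpha_{i_t},\alpha_{i_t})$ by observing that intermediate residues are never leaves of $\Delta$. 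Your recursive coefficient computation is equivalent to the paper's closed-form expansion of $s_{i_0}\cdots s_{i_{k-1}}(\alpha_{i_k})$, and your type-by-type check matches the paper's.
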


\begin{proof}
Note that, by induction on $k$, we can see that
\[s_{i_0}s_{i_1}\cdots s_{i_{k-1}}(\alpha_{i_k})=  \alpha_{i_k} + \sum_{p=1}^k (-2)^p\frac{\prod_{t=0}^{p-1}(\alpha_{i_{k-t-1}}, \alpha_{i_{k-t}})}{\prod_{t=0}^{p-1}(\alpha_{i_{k-t-1}}, \alpha_{i_{k-t-1}})} \, \alpha_{i_{k-p}}.\]

Proposition \ref{the prop} shows that there exists $w \in W$ such that
\[ \alpha= w (\alpha_i) \text{ and } \beta= w s_i s_{i_1} s_{i_2}\cdots s_{i_{k-1}}(\alpha_j).\]
We have
\begin{equation}
\begin{aligned}
& (w (\alpha_i), w s_i s_{i_1} s_{i_2}\cdots s_{i_{k-1}}(\alpha_j)) = (\alpha_{i_0}, s_{i_0} s_{i_1} s_{i_2}\cdots s_{i_{k-1}}(\alpha_j)) \\
&  =\left(\,  \alpha_{i_0} \, , \alpha_{i_k} + \sum_{p=1}^k (-2)^p\frac{\prod_{t=0}^{p-1}(\alpha_{i_{k-t-1}}, \alpha_{i_{k-t}})}{\prod_{t=0}^{p-1}(\alpha_{i_{k-t-1}}, \alpha_{i_{k-t-1}})} \, \alpha_{i_{k-p}} \, \right). \\
\end{aligned}
 \end{equation}
 Since $(\alpha_{i_0}, \alpha_{i_p})= 0$  for $p\neq 0,1$,
 \begin{equation}
 \begin{aligned}
  & (\alpha_{i_0}, s_{i_0} s_{i_1} s_{i_2}\cdots s_{i_{k-1}}(\alpha_j))\\
  & =  \left(\alpha_{i_0},   (-2)^{k-1}\frac{\prod_{t=1}^{k-1}(\alpha_{i_{t}}, \alpha_{i_{t+1}})}{\prod_{t=1}^{k-1}(\alpha_{i_{t}}, \alpha_{i_{t}})} \, \alpha_{i_{1}}+ (-2)^{k}\frac{\prod_{t=0}^{k-1}(\alpha_{i_{t}}, \alpha_{i_{t+1}})}{\prod_{t=0}^{k-1}(\alpha_{i_{t}}, \alpha_{i_{t}})} \, \alpha_{i_{0}}  \right) \\
& =    - (-2)^{k-1}\frac{\prod_{t=1}^{k-1}(\alpha_{i_{t}}, \alpha_{i_{t+1}})}{\prod_{t=1}^{k-1}(\alpha_{i_{t}}, \alpha_{i_{t}})} \, ( \alpha_{i_0},\alpha_{i_{1}} ) = \prod_{t=1}^{k-1} \frac{2}{(\alpha_{i_t},\alpha_{i_t})}\prod_{t=0}^{k-1} -(\alpha_{i_{t}}, \alpha_{i_{t+1}}).
 \end{aligned}
 \end{equation}
 Here we note that the residue $i_t$ for $t=1,2,\cdots, k-1$ cannot be $1$ or the rank $n$. (Only $i_0$ and $i_k$ can be $1$ or $n$.) According to \cite{Bour}, in the case of except $F_4$, we can check that $(\alpha_{i_t},\alpha_{i_t})=2$ for all $t=1,2\cdots, k-1$. In the case of type $F_4$, we have $(\alpha_2, \alpha_2)=2$ and $(\alpha_3, \alpha_3)=1.$ Hence we get the formula (\ref{Eqn:2.3}).
\end{proof}


\begin{remark}
For any finite type other than $F_4$, we have
\[ (\alpha, \beta)=\prod_{t=0}^{k-1} (\gamma_t, \gamma_{t+1})= \prod_{t=0}^{k-1} -(\alpha_{i_t}, \alpha_{i_{t+1}}) = \prod_{t=0}^{k-1} m_{i_t, i_{t+1}}>0. \]
Here we use notations in Proposition \ref{Prop:meaning of number in arrow}.
\end{remark}

\begin{example} \label{Ex: C3}
Let us consider  $\redez=(s_3,s_2, s_3, s_2, s_1, s_2, s_3, s_2, s_1)$ of type $C_3$. Then
$$\Upsilon_{[\redez]}=
 \scalebox{0.84}{\xymatrix@C=1ex@R=1ex{
1& \alpha_1 \ar@{->}[dr] &&&&\alpha_1 \shp 2\alpha_2 \shp \alpha_3\ar@{->}[dr]&&&&\\
2&& \alpha_1 \shp \alpha_2 \ar@{=>}[dr] && \alpha_1 \shp \alpha_2 \shp \alpha_3\ar@{->}[ur] && \alpha_2\ar@{=>}[dr] && \alpha_2 \shp \alpha_3\ar@{=>}[dr]& \\
3&&& 2\alpha_1 \shp 2\alpha_2 \shp \alpha_3\ar@{=>}[ur] &&&& 2\alpha_2
\shp \alpha_3\ar@{=>}[ur] && \alpha_3 }}.
$$
One can check that Proposition \ref{Prop:meaning of number in arrow} holds in the above quiver. For instance,
\begin{align*}
2 & = (\alpha_1 + 2\alpha_2 + \alpha_3,2\alpha_1 + 2\alpha_2 + \alpha_3)  \\ & =(\alpha_1 + 2\alpha_2 + \alpha_3,\alpha_1 + \alpha_2 + \alpha_3)(\alpha_1 + \alpha_2 + \alpha_3,2\alpha_1 + 2\alpha_2 + \alpha_3) \\
& = (\alpha_1,\alpha_2)(\alpha_2,\alpha_3).
\end{align*}
\end{example}

\begin{proposition} \label{Lemma:converse}
Let $\alpha, \beta \in \Phi(w)$ and $\redex$ be a reduced expression
of $w\in \W$. Suppose there is no path between $\alpha$ and $\beta$
in $\Upsilon_{[\redex]}$. Then we have $(\alpha, \beta)=0$.
\end{proposition}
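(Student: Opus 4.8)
The plan is to prove the contrapositive form indirectly: assuming $(\alpha,\beta) \neq 0$, I would like to exhibit a path between $\alpha$ and $\beta$ in $\Upsilon_{[\redex]}$, thereby contradicting the hypothesis. So suppose $\alpha = \beta^\redex_k$ and $\beta = \beta^\redex_l$ with $k < l$ for a fixed reduced expression $\redex = (s_{i_1}, \ldots, s_{i_{\ell(w)}}) \in [\redex]$, and suppose toward a contradiction that $(\alpha,\beta) \neq 0$. Write $i = \res(\alpha) = i_k$ and $j = \res(\beta) = i_l$.

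The key idea is to reduce to the distance-one case already handled in Proposition \ref{the prop}, where an arrow $\beta \to \alpha$ forces $(\alpha,\beta) = -(\alpha_i,\alpha_j) > 0$, and to its sectional-path generalization in Proposition \ref{Prop:meaning of number in arrow}. Concretely, I would argue that if $(\alpha,\beta) \neq 0$, then the residues $i$ and $j$ must be equal or adjacent in $\Delta$, since $(\alpha,\beta)$ is a product (up to sign and powers of $2$) of pairings $(\alpha_{i_t}, \alpha_{i_{t+1}})$ along a connecting path, and these pairings vanish for non-adjacent residues. First I would dispose of the case $i = j$: by Proposition \ref{Lem:sameresidue}, two vertices with the same residue are always joined by a path, so no path between $\alpha$ and $\beta$ would already be a contradiction, independent of the value of $(\alpha,\beta)$. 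Hence I may assume $d_\Delta(i,j) = 1$, i.e.\ $i$ and $j$ are adjacent, so that $(\alpha_i, \alpha_j) < 0$.

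In the adjacent case, I would analyze the positions of the vertices with residues $i$ or $j$ lying strictly between $k$ and $l$. If there is no such intermediate vertex, then condition (Q2)(iii) of Algorithm \ref{Alg_AbsAR} is satisfied directly and there is an arrow $\beta \to \alpha$, giving a path. If intermediate vertices with residue $i$ or $j$ exist, I would use them as stepping stones: the crucial point is that any two adjacent-residue vertices that are consecutive among those of residues $i$ and $j$ are connected by an arrow, and chaining these arrows produces a path from $\beta$ to $\alpha$ (this is essentially the mechanism already exploited in the proof of Proposition \ref{Lem:sameresidue}). Either way, a path between $\alpha$ and $\beta$ exists, contradicting the hypothesis.

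The main obstacle I anticipate is the bookkeeping for residues that are adjacent to \emph{both} $i$ and $j$, or the situation in multiply-laced types where $i$ and $j$ have different root lengths; one must verify that the nonvanishing of $(\alpha,\beta)$ genuinely forces adjacency of the \emph{extreme} residues and not merely of some pair of intermediate residues. The cleanest route around this is likely to invert the formula of Proposition \ref{Prop:meaning of number in arrow}: if $\alpha$ and $\beta$ were connected by a sectional path, then $(\alpha,\beta)$ would equal a nonzero product, so the contrapositive statement — no path implies a specific structural vanishing — should follow from reasoning about where the pairing $(\alpha_{i_t}, \alpha_{i_{t+1}})$ can be nonzero. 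I expect the argument to hinge on showing that a nonzero inner product $(\alpha,\beta)$ cannot occur without the residues $i,j$ being comparable in the quiver via the connectivity guaranteed by Proposition \ref{Lem:sameresidue} and the arrow-creation rule (Q2), and the care needed is precisely in ruling out a nonzero pairing between two roots whose residues are non-adjacent and which sit in disconnected parts of $\Upsilon_{[\redex]}$.
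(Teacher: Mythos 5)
Your proof has a genuine gap at its central step. You reduce the contrapositive to the claim that $(\alpha,\beta)\neq 0$ forces the residues $i=\res(\alpha)$ and $j=\res(\beta)$ to be equal or adjacent in $\Delta$, and this claim is false. For instance, in the adapted class of Example \ref{Ex:adapted AR quiver Aprime} (type $A_5$), take $\alpha=[5]=\alpha_5$ with residue $1$ and $\beta=[3,5]=\alpha_3+\alpha_4+\alpha_5$ with residue $5$: then $(\alpha,\beta)=1\neq 0$ while $d_\Delta(1,5)=4$. (Here a path does exist, so the proposition is not contradicted, but it is not a path your mechanism --- chaining vertices of residues $i$ and $j$ --- would ever produce.) The source of the error is that the formulas of Proposition \ref{the prop} and Proposition \ref{Prop:meaning of number in arrow}, which express $(\alpha,\beta)$ in terms of pairings $(\alpha_{i_t},\alpha_{i_{t+1}})$ of simple roots attached to residues, hold \emph{only under the hypothesis} that $\alpha$ and $\beta$ are joined by an arrow or lie on a common sectional path; they cannot be ``inverted'' to constrain $(\alpha,\beta)$ for an arbitrary pair. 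For two general positive roots the value of $(\alpha,\beta)$ has no a priori relation to the pairing of the simple roots indexed by their residues. You flag this worry yourself at the end, but the argument you sketch does not resolve it, and no local residue-adjacency analysis can.

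The paper's proof goes in the opposite direction and avoids this entirely: it uses the absence of a path as a \emph{combinatorial} hypothesis permitting commutations. One partitions the letters between positions $k$ and $l$ according to whether the corresponding root lies on a path to $\alpha$, and slides the two groups past each other to produce $\redex'\in[\redex]$ in which $\alpha$ and $\beta$ occupy consecutive positions $k',k'+1$. Only then does the residue pairing enter: for consecutive positions one has $(\alpha,\beta)=(\alpha_{i_{k'}}, s_{i_{k'}}\alpha_{i_{k'+1}})=(\alpha_{i_{k'}},\alpha_{i_{k'+1}})$, and since no arrow joins them, rule (Q2) forces $d_\Delta(i_{k'},i_{k'+1})\neq 1$ (and reducedness forces $i_{k'}\neq i_{k'+1}$), whence the pairing vanishes. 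The identity between $(\alpha,\beta)$ and a simple-root pairing is thus a \emph{consequence} of the adjacency achieved by commutation, not a general fact one can assume at the outset. To repair your argument you would need to supply this commutation step, at which point you would essentially be reproducing the paper's proof.
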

\begin{proof} 
Since $<_{\redex}$ is a total order, we can assume that
$\be^{\redex}_k=\al$ and $\be^{\redex}_l=\be$ for $k <l$ without
loss of generality. If $l-k =1$, then
\[ (\alpha, \beta)=(s_{i_1}\cdots, s_{i_{k-1}}(\alpha_{i_k}), s_{i_1}\cdots, s_{i_{k-1}}s_{i_k}(\alpha_{i_l}))=(\alpha_{i_k},s_{i_k}(\alpha_{i_l}))=(\alpha_{i_k}, \alpha_{i_l})=0. \]

Now suppose $l-k\geq 2$. It is enough to find $\redex' \in [\redex]$
such that $\be^{\redex'}_{k'}=\al$ and $\be^{\redex'}_{k'+1}=\be$
for some $k'\in \{1, \cdots, \ell(w)-1\}$. Take the set
\[ P=\left\{ a_t \in \N \left|\,
\begin{array}{l}
 k<a_t<l,\, t=1, \cdots, m, \,  a_1<a_2<\cdots<a_m, \\
\text{ each } \beta_{a_t}^\redex \text{ is on a path to $\alpha$ in
$\Upsilon_{[\redex]}$}.
\end{array}
\right. \right\}  \] and let $P^c= \{ k, k+1, \cdots, l\} \backslash
P=\{b_1, b_2, \cdots, b_{(l-k+1)-m}\}$ where $b_1<b_2<\cdots
<b_{(l-k+1)-m}.$ Then $\beta_{b_1}^\redex$ is not connected with any
of vertices in $\{\, \beta_{i}^\redex \, | \, k\leq i <b_1\,\} $.
Hence
\[\redex_{k\leq \cdot \leq l}= (s_{i_k}, \cdots,  s_{i_{{b_1}-1}},s_{i_{b_1}}, s_{i_{{b_1}+1}},\cdots, s_{i_l})\sim (s_{i_{b_1}}, s_{i_k}, \cdots, s_{i_{{b_1}-1}},s_{i_{{b_1}+1}},\cdots,  s_{i_l}).\]
Inductively, we can do the same thing with $b_2, \cdots,
b_{(l-k+1)-m}$ and finally get the following equivalent reduced
expression to $\redex_{k\leq \cdot \leq l}$:
\[ \redex_{k\leq \cdot \leq l}\sim (s_{i_{b_1}}, \cdots, s_{i_{b_{(l-k+1)-m}}}, s_{i_k}, s_{i_{a_1}}, \cdots, s_{i_{a_m}}, s_{i_l}).\]
Since $\beta_{a_m}^\redex$ is not connected to $\beta$, we have
$s_{i_{a_m}} s_{i_l}=s_{i_l}s_{i_{a_m}}.$ Hence
\[ \redex_{k\leq \cdot \leq l}\sim (s_{i_{b_1}}, \cdots, s_{i_{b_{(l-k+1)-m}}}, s_{i_k}, s_{i_{a_1}}, s_{i_{a_{m-1}}}, \cdots, s_{i_l},  s_{i_{a_m}}).\]
Inductively, we get
\[ \redex_{k\leq \cdot \leq l}\sim (s_{i_{b_1}}, \cdots, s_{i_{b_{(l-k+1)-m}}}, s_{i_k}, s_{i_l}, s_{i_{a_1}}, \cdots, s_{i_{a_m}} ).\]
Then \[\redex'=(s_{i_1}, \cdots, s_{i_{k-1}}, s_{i_{b_1}}, \cdots,
s_{i_{b_{(l-k+1)-m}}}, s_{i_k}, s_{i_l}, s_{i_{a_1}}, \cdots,
s_{i_{a_m}}, s_{i_{l+1}}, \cdots, s_{i_{\ell(w)}})\] is the reduced
expression in $[\redex]$ we desired.
\end{proof}


We record the following corollary, appeared in the proof of the
above proposition, for the later use.

\begin{corollary} \label{cor: imcom adja}
Let $\alpha, \beta \in \Phi(w)$ and $\redex$ be a reduced expression
of $w\in \W$. If there is no path between $\alpha$ and $\beta$ in
$\Upsilon_{[\redex]}$, then there are two distinct reduced
expressions $\redex'$ and $\redex''$ in $[\redex]$ and two integers
$k, l\in \N$ such that $\beta^{\redex'}_{k+1}=\alpha$,
$\beta^{\redex'}_{k}=\beta$ and  $\beta^{\redex''}_{l}=\alpha$,
$\beta^{\redex''}_{l+1}=\beta$.
\end{corollary}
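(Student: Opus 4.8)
The plan is to read off the two expressions directly from work already done in the proof of Proposition \ref{Lemma:converse}. That proof shows that, under the hypothesis that there is no path between $\alpha$ and $\beta$, one can rearrange $\redex$ within its commutation class so that $\alpha$ and $\beta$ occupy consecutive positions, with $\alpha$ immediately preceding $\beta$. I would take the resulting expression as $\redex''$, so that $\beta^{\redex''}_l=\alpha$ and $\beta^{\redex''}_{l+1}=\beta$ for the appropriate index $l$. This already provides one of the two reduced expressions demanded by the statement, so the only remaining task is to manufacture $\redex'$, in which the order of $\alpha$ and $\beta$ is reversed.

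To obtain $\redex'$, the idea is to apply a single commutation move to $\redex''$ at positions $l$ and $l+1$. First I would observe that, since $\alpha$ and $\beta$ are word-adjacent in $\redex''$, the computation of the $l-k=1$ case in the proof of Proposition \ref{Lemma:converse} gives $(\alpha,\beta)=-(\alpha_p,\alpha_q)$, where $p=\res(\alpha)$ and $q=\res(\beta)$ are the residues at positions $l$ and $l+1$. By Proposition \ref{Lemma:converse} the hypothesis forces $(\alpha,\beta)=0$, hence $(\alpha_p,\alpha_q)=0$, i.e. $d_{\Delta}(p,q)>1$. Therefore $s_p$ and $s_q$ commute, and the commutation relation $s_ps_q=s_qs_p$ may legitimately be applied to positions $l,l+1$ of $\redex''$, producing a reduced expression $\redex'$ that still lies in $[\redex]$.

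The last step is to verify that this single swap interchanges the labels $\alpha$ and $\beta$ rather than altering them. Writing $\redex''=(s_{j_1},\cdots,s_{j_{\ell(w)}})$ with $j_l=p$ and $j_{l+1}=q$, orthogonality of the two simple roots gives $s_p(\alpha_q)=\alpha_q$ and $s_q(\alpha_p)=\alpha_p$, so that $\alpha=s_{j_1}\cdots s_{j_{l-1}}(\alpha_p)$ and $\beta=s_{j_1}\cdots s_{j_{l-1}}s_p(\alpha_q)=s_{j_1}\cdots s_{j_{l-1}}(\alpha_q)$. Reading off the labels of $\redex'=(s_{j_1},\cdots,s_{j_{l-1}},s_q,s_p,s_{j_{l+2}},\cdots)$ at positions $l$ and $l+1$ then yields $\beta^{\redex'}_l=s_{j_1}\cdots s_{j_{l-1}}(\alpha_q)=\beta$ and $\beta^{\redex'}_{l+1}=s_{j_1}\cdots s_{j_{l-1}}s_q(\alpha_p)=\alpha$. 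Setting $k=l$ delivers the required $\redex'$ with $\beta^{\redex'}_k=\beta$ and $\beta^{\redex'}_{k+1}=\alpha$, and since the roots attached to positions $l,l+1$ have been interchanged, $\redex'$ and $\redex''$ are distinct.

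I do not anticipate a genuine obstacle here: the substantial part, namely rearranging $\redex$ so that $\alpha$ and $\beta$ become word-adjacent, is exactly what the proof of Proposition \ref{Lemma:converse} accomplishes, and the only new input is that orthogonality of the residues $p,q$ permits the commutation swap. The single point requiring care is the bookkeeping in the final paragraph, which reduces to the elementary identities $s_p(\alpha_q)=\alpha_q$ and $s_q(\alpha_p)=\alpha_p$ for orthogonal simple roots.
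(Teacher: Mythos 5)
Your argument is correct and is essentially the paper's own: the paper records this corollary as a byproduct of the proof of Proposition \ref{Lemma:converse} (which produces one of the two adjacent arrangements), and the second arrangement is obtained exactly as you describe, by the single commutation swap of the two orthogonal adjacent letters --- this is the step the paper carries out explicitly in the proof of Theorem \ref{Thm:order_path}. Your final bookkeeping with $s_p(\alpha_q)=\alpha_q$ and $s_q(\alpha_p)=\alpha_p$ is the right (and complete) verification that the swap interchanges the labels.
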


\begin{proposition} \label{Prop:level_AR}
For a reduced expression $\redex$ of $w \in W$ of any finite type,
define a function $\lambda_{\Upsilon_{[\redex]}}:\Phi^+(w)\to \N$ in
an inductive way:
$$ \alpha \mapsto \left\{
\begin{array}{ll}
1, & \text{ if $\alpha$ is a sink in $\Upsilon_{[\redex]}$} \\
\max\{\lambda_{\Upsilon_{[\redex]}}(\beta) \, | \, \alpha\to \beta
\text{ in } \Upsilon_{[\redex]}\}+1, & \text{ otherwise. }
\end{array}
 \right.$$
Then we have $\lambda_{\Upsilon_{[\redex]}} =\lambda_{[\redex]}$.
\end{proposition}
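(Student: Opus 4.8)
The plan is to prove the two inequalities $\lambda_{\Upsilon_{[\redex]}}(\alpha)\le\lambda_{[\redex]}(\alpha)$ and $\lambda_{[\redex]}(\alpha)\le\lambda_{\Upsilon_{[\redex]}}(\alpha)$ separately, after recording three elementary facts. First, by {\rm (Q2)(i)} of Algorithm \ref{Alg_AbsAR}, every arrow of $\Upsilon_{[\redex]}$ strictly decreases the value of the position function $\pi_\redex$; consequently any directed walk in $\Upsilon_{[\redex]}$ has strictly decreasing positions, and is therefore automatically a simple directed path with no repeated vertices. Second, by Proposition \ref{the prop}, an arrow $\alpha\to\beta$ forces $(\alpha,\beta)>0$, in particular $(\alpha,\beta)\ne 0$. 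Third, unwinding the recursion defining $\lambda_{\Upsilon_{[\redex]}}$ (together with the fact that $\Upsilon_{[\redex]}$ is acyclic, by the first fact), one sees that $\lambda_{\Upsilon_{[\redex]}}(\alpha)$ equals the number of vertices on a longest directed path of $\Upsilon_{[\redex]}$ starting at $\alpha$.

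For the inequality $\lambda_{\Upsilon_{[\redex]}}\le\lambda_{[\redex]}$, I would take a longest directed path $\alpha=\gamma_1\to\gamma_2\to\cdots\to\gamma_m$ starting at $\alpha$, so that $m=\lambda_{\Upsilon_{[\redex]}}(\alpha)$. Reversing it yields a sequence $\gamma_m,\gamma_{m-1},\dots,\gamma_1=\alpha$ ending with $\alpha$ whose positions strictly increase (first fact) and whose consecutive inner products are nonzero (second fact). Hence it is an admissible sequence as in \eqref{Eqn:seq} of Definition \ref{Def:lev,F}, whence $\lambda_{[\redex]}(\alpha)\ge m=\lambda_{\Upsilon_{[\redex]}}(\alpha)$.

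The substantial direction is $\lambda_{[\redex]}\le\lambda_{\Upsilon_{[\redex]}}$. Here I would start from a sequence $\beta_1,\beta_2,\dots,\beta_k=\alpha$ realizing $\lambda_{[\redex]}(\alpha)=k$, so that $\pi_\redex(\beta_{i-1})<\pi_\redex(\beta_i)$ and $(\beta_i,\beta_{i-1})\ne 0$ for each $i$. The key step is to upgrade each such link to a directed path: since $(\beta_i,\beta_{i-1})\ne 0$, the contrapositive of Proposition \ref{Lemma:converse} supplies a directed path between $\beta_{i-1}$ and $\beta_i$ in $\Upsilon_{[\redex]}$, and by the first fact its orientation is forced to run from the vertex of larger position to the vertex of smaller position, i.e.\ from $\beta_i$ to $\beta_{i-1}$. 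Concatenating these paths for $i=k,k-1,\dots,2$ produces a directed walk from $\alpha=\beta_k$ to $\beta_1$ passing through all of $\beta_1,\dots,\beta_k$; by the first fact this walk is a simple directed path and so has at least $k$ vertices. Therefore $\lambda_{\Upsilon_{[\redex]}}(\alpha)\ge k=\lambda_{[\redex]}(\alpha)$, and combining the two inequalities gives $\lambda_{\Upsilon_{[\redex]}}=\lambda_{[\redex]}$.

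I expect the main obstacle to be precisely this upgrade step: one must be certain that the path produced by Proposition \ref{Lemma:converse} is \emph{directed} and that its orientation is pinned down by $\pi_\redex$, so that the individual segments glue coherently into one directed walk, and that the gluing introduces no repetitions — both of which reduce to the single observation that every arrow strictly decreases $\pi_\redex$. Should the ``path'' in Proposition \ref{Lemma:converse} be read as undirected, the fallback is to invoke comparability of $\beta_{i-1}$ and $\beta_i$ under $\prec_{[\redex]}$ together with the fact that $<_\redex$ refines $\prec_{[\redex]}$ to fix the orientation.
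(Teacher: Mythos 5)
Your proposal is correct and takes essentially the same route as the paper: the paper's own proof is the one-line statement that the result ``directly follows from Proposition \ref{the prop} and Proposition \ref{Lemma:converse}'', and your argument is precisely the careful unpacking of that claim (arrows give nonzero pairings, absence of a path gives a zero pairing, and the position function orients and de-duplicates everything). The only difference is that you supply the details the authors leave implicit.
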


\begin{proof}
Our assertion directly follows from Proposition \ref{the prop} and
Proposition \ref{Lemma:converse}.
\end{proof}

By Proposition \ref{Prop:level_AR} and properties of the level
function $\lambda_{[\redex]}$, we have the following theorem.

\begin{theorem} \label{Thm:AR_class}
Two reduced expressions $\redex$ and $\redex'$ are commutation
equivalent if and only if
$\Upsilon_{[\redex]}=\Upsilon_{[\redex']}$.
\end{theorem}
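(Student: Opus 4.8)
The plan is to prove the two implications separately, since one direction is already established earlier in the section. The forward implication—that $\redex\sim\redex'$ forces $\Upsilon_{[\redex]}=\Upsilon_{[\redex']}$—is precisely Proposition \ref{Prop:AR_class}, so no further work is needed there, and the whole content of the theorem lies in the converse.

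For the converse, suppose $\Upsilon_{[\redex]}=\Upsilon_{[\redex']}$. The first thing I would record is a bookkeeping point: by construction (Q1) the vertex set of $\Upsilon_{[\redex]}$ is $\Phi(w)$ for the underlying element $w$, and likewise the vertex set of $\Upsilon_{[\redex']}$ is $\Phi(w')$. Equality of the two quivers in particular equates these vertex sets as subsets of $\Phi^+$, so $\Phi(w)=\Phi(w')$; since the inversion set characterizes an element of $\W$ uniquely (see \eqref{eq: cahracterization}), this yields $w=w'$. Consequently $\redex$ and $\redex'$ are reduced expressions of one and the same element, and their level functions are defined on the common domain $\Phi(w)$, so that comparing them is meaningful.

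Next I would invoke Proposition \ref{Prop:level_AR}, which expresses the level function purely in terms of the arrow structure of the combinatorial AR quiver, namely $\lambda_{\Upsilon_{[\redex]}}=\lambda_{[\redex]}$. The inductive rule defining $\lambda_{\Upsilon_{[\redex]}}$ reads off only the sinks and the arrows $\alpha\to\beta$ of the quiver, so two identical quivers produce identical functions. Chaining the identities then gives $\lambda_{[\redex]}=\lambda_{\Upsilon_{[\redex]}}=\lambda_{\Upsilon_{[\redex']}}=\lambda_{[\redex']}$. Finally, the characterization of commutation classes by level functions (\cite{B99})—that two reduced expressions of $w$ are commutation equivalent if and only if their level functions coincide—upgrades this equality of functions to $\redex\sim\redex'$, i.e.\ $[\redex]=[\redex']$, which completes the converse.

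The substantive mathematics is entirely packaged inside Proposition \ref{Prop:level_AR} (and, through it, Propositions \ref{the prop} and \ref{Lemma:converse}, which guarantee that arrows detect exactly the nonorthogonal adjacencies that the level function uses). Given that machinery, I expect no genuine obstacle: the only points requiring care are the two verifications that equality of quivers is honest enough to transfer the data we need—first that the shared vertex labels force $w=w'$, so that the level functions live on the same domain, and second that the level function is a quiver invariant in the strong sense that it depends only on sinks and arrows and not on any auxiliary choice. Both are immediate once the preceding propositions are in place.
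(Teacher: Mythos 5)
Your proof is correct and follows essentially the same route as the paper's: both reduce the converse to the chain $\lambda_{[\redex]}=\lambda_{\Upsilon_{[\redex]}}=\lambda_{\Upsilon_{[\redex']}}=\lambda_{[\redex']}$ via Proposition \ref{Prop:level_AR} and then invoke the level-function characterization of commutation classes from \cite{B99}. Your additional observation that equality of the vertex sets forces $w=w'$ (so the level functions share a domain) is a sensible clarification the paper leaves implicit, but it does not change the argument.
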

\begin{proof}
It is enough to show that if $\Upsilon_{[\redex]}=
\Upsilon_{[\redex']}$ then $[\redex]=[\redex']$. However, since we
know that
$\lambda_{[\redex]}=\lambda_{\Upsilon_{[\redex]}}=\lambda_{\Upsilon_{[\redex']}}=\lambda_{[\redex']}$
and  $\lambda_{[\redex]}= \lambda_{[\redex']}$ implies
$[\redex]=[\redex']$, our assertion follows.
\end{proof}

Theorem \ref{Thm:AR_class} implies that we can get every equivalent reduced expression $\redex'$ to $\redex$ by observing $\Upsilon_{[\redex]}$: 


\begin{theorem} \label{Thm: compatible reading}
Every reduced expression of $w$ in $[\redex]$ can be obtained by a compatible reading of $\Upsilon_{[\redex]}.$
\end{theorem}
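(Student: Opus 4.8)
The plan is to take an arbitrary $\redex'\in[\redex]$ and exhibit an explicit compatible reading of $\Upsilon_{[\redex]}$ whose output is exactly $\redex'$. Since $\redex\sim\redex'$, Proposition \ref{Prop:AR_class} gives $\Upsilon_{[\redex]}=\Upsilon_{[\redex']}$ as labeled quivers, so it is harmless to compute the arrows of this common quiver using the representative $\redex'$. Writing $\redex'=(s_{i'_1},\dots,s_{i'_{\ell(w)}})$ and recalling that the vertex set is $\Phi(w)$ with $\beta^{\redex'}_k=s_{i'_1}\cdots s_{i'_{k-1}}(\alpha_{i'_k})$ and $\res(\beta^{\redex'}_k)=i'_k$, I would then read the vertices in increasing order of the position function $\pi_{\redex'}$, that is, in the order $\beta^{\redex'}_1,\beta^{\redex'}_2,\dots,\beta^{\redex'}_{\ell(w)}$, recording the residue of each vertex as it is read.

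First I would check that this reading reproduces $\redex'$: by definition the residue recorded at the $k$-th step is $\res(\beta^{\redex'}_k)=i'_k$, so the output sequence of simple reflections is precisely $(s_{i'_1},\dots,s_{i'_{\ell(w)}})=\redex'$. Next I would verify that this reading is compatible, i.e. that whenever there is an arrow $u\to v$ in $\Upsilon_{[\redex]}$ the target $v$ is read strictly before the source $u$. Computing the arrows via the representative $\redex'$ and invoking condition {\rm (Q2)(i)} of Algorithm \ref{Alg_AbsAR}, every arrow has the form $\beta^{\redex'}_k\to\beta^{\redex'}_j$ with $k>j$; hence the target $\beta^{\redex'}_j$ has strictly smaller position than the source $\beta^{\redex'}_k$ and is therefore read first. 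Thus the $\pi_{\redex'}$-increasing reading respects all arrows and is a compatible reading, which establishes the theorem since $\redex'$ was arbitrary.

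The only substantive ingredient is the commutation invariance $\Upsilon_{[\redex]}=\Upsilon_{[\redex']}$ from Proposition \ref{Prop:AR_class}: once the quiver is known to depend only on $[\redex]$, the orientation convention {\rm (Q2)(i)} --- arrows always point toward strictly earlier positions in \emph{any} representative --- forces each total order $<_{\redex'}$ to be a linear extension of the arrow relation, so compatibility is automatic and there is no real obstacle in this direction. I expect the genuinely delicate statement to be the converse, namely that every compatible reading of $\Upsilon_{[\redex]}$ is a reduced expression lying in $[\redex]$; there one must show that transposing two adjacent vertices that are \emph{not} connected by an arrow in a compatible reading is exactly a commutation move, which would rely on Proposition \ref{Lemma:converse} and Corollary \ref{cor: imcom adja}. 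For the stated direction, however, this machinery is not required.
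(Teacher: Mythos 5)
Your proposal is correct and is essentially the argument the paper intends: the paper leaves the proof implicit, deriving the statement from Theorem \ref{Thm:AR_class}, and your explicit version uses exactly the relevant ingredient (Proposition \ref{Prop:AR_class}, i.e.\ commutation invariance of $\Upsilon_{[\redex]}$) together with the orientation convention {\rm (Q2)(i)} to see that reading the vertices in $\pi_{\redex'}$-increasing order is a compatible reading producing $\redex'$. You also correctly observe that the genuinely delicate converse (every compatible reading lies in $[\redex]$) is not what this theorem asserts.
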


\begin{remark}
Throughout this section, we actually use residues as labels of $\Upsilon_{[\redex]}^0$, which need not compute.
In Section \ref{Sec: Labeling}, we will suggest an efficient algorithm for labeling of $\Upsilon_{[\redex]}^0$ with positive roots.
%
%
\end{remark}

Now we will show that combinatorial AR
quivers $\Upsilon_{[\redez]}$ can be considered as a generalization of AR quivers $\Gamma_Q$ by providing isomorphism of quivers
$$ \Upsilon_{[\redez]} \simeq \Gamma_Q   \quad \text{ when $[\redez]=[Q]$}.$$


\begin{lemma} \label{Lem:adapted/nonadapted} For a Dynkin quiver $Q$ of type ADE,
let $\redez= (s_{i_1},\cdots ,s_{i_N}) \in [Q]$ and $(i,j)\in I^2$
be a pair of indices with $d_{\Delta}(i,j)=1$. If
$i_k=i_{k'}=i$ for $k<k'$ then there exists $t$ such that $k<t<k'$
and $i_t=j$.
\end{lemma}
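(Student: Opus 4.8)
The plan is to track the orientation of the single edge $\{i,j\}$ of $\Delta$ (it is a single edge, since $d_{\Delta}(i,j)=1$ in type ADE) as one runs through the sink-reflection procedure defining an adapted reduced expression. First I would reduce to the case in which $k'$ is the \emph{first} occurrence of $i$ after position $k$, i.e. $i_t \ne i$ for all $k<t<k'$. Indeed, a $j$ found strictly between $k$ and this first subsequent occurrence of $i$ is a fortiori a $j$ strictly between $k$ and the original $k'$, so the general statement follows from this reduced case.

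Next I would record the two endpoint orientations dictated by the adapted condition (Definition \ref{def: adapted}). Writing $Q_m = s_{i_m}\cdots s_{i_1}(Q)$, the hypothesis $i_k=i$ means $i$ is a sink of $Q_{k-1}$, so the edge $\{i,j\}$ is oriented $j\to i$ in $Q_{k-1}$; applying $s_i$ turns $i$ into a source, whence $\{i,j\}$ becomes $i\to j$ in $Q_k$. Likewise $i_{k'}=i$ forces $i$ to be a sink of $Q_{k'-1}$, so $\{i,j\}$ is oriented $j\to i$ in $Q_{k'-1}$. Thus the orientation of $\{i,j\}$ is reversed as we pass from $Q_k$ to $Q_{k'-1}$.

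The heart of the argument is the flip mechanism. Since every $i_t$ is a sink of $Q_{t-1}$, applying $s_{i_t}$ reverses precisely the edges incident to $i_t$ and fixes all other edges; hence $\{i,j\}$ changes orientation at step $t$ if and only if $i_t\in\{i,j\}$. By the reduction, no $i$ occurs among $i_{k+1},\dots,i_{k'-1}$, so the only steps in this range that can flip $\{i,j\}$ are those with $i_t=j$. Because the orientation is genuinely reversed between $Q_k$ and $Q_{k'-1}$, the number of such flips is odd, in particular nonzero, which yields a $t$ with $k<t<k'$ and $i_t=j$, as required.

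The main obstacle is the careful bookkeeping of this flip mechanism: one must verify that a sink reflection $s_{i_t}$ toggles exactly the edges at $i_t$, which is immediate from the definition of the $\W$-action on $\lf Q \rf$ once one notes that $i_t$ is always a sink of $Q_{t-1}$, and then track the parity of flips accurately across the interval. The reduction step and the two endpoint-orientation computations are routine once the adapted condition is unwound.
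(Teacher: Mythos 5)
Your proof is correct and follows essentially the same route as the paper: both reduce to the first occurrence of $i$ after position $k$ and track the orientation of the edge $\{i,j\}$ through the sink reflections $Q_m = s_{i_m}\cdots s_{i_1}(Q)$, using that $i$ must be a sink of both $Q_{k-1}$ and $Q_{k'-1}$. The paper phrases the conclusion as a direct contradiction (with no $j$ in between, the edge stays oriented $i\to j$ in $Q_{k'-1}$, so $i$ cannot be a sink there), whereas you phrase it as an odd-parity count of flips, but this is the same mechanism.
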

\begin{proof}
 Let us denote
 \begin{equation} \label{Eqn:quiver_sink}
 Q_m=s_{i_m}\cdots s_{i_1}Q \qquad\text{ for } m=1, \cdots N.
 \end{equation}
  Then $i_k$ is a sink of $Q_{k-1}$. Suppose $k'=\min\{k_1 \, | \,  k_1>k, \, i_{k_1}=i\} $. If $\{t\, |\, k<t<k', \, i_k=j\}=\emptyset $
  then the arrow between $i$ and $j$ in  $Q_{k'-1}$ has the direction $i\to j$. Hence the vertex $i$ cannot be a sink of $Q_{k'-1}$, which is a contradiction.
\end{proof}

\begin{example} In Example \ref{nonadapted AR}, we can obtain the following reduced expression in $[\redez]$ by compatible reading:
$$(s_1,s_2,s_5,s_3,s_4,s_3,s_1,s_2,s_5,s_1,s_3,s_4,s_3)$$
In the above reduced expression, one can check that Lemma \ref{Lem:adapted/nonadapted} do {\it not} hold, since it is not adapted to any $Q$.
\end{example}

The following corollary directly follows from Lemma
\ref{Lem:adapted/nonadapted}.

\begin{corollary}
Suppose $\redex= (s_{i_1},\cdots, s_{i_{\ell(w)}})$ is a reduced
expression of $w\in \W$ and let $i,j\in I$ be indices with $d_\Delta(i,j)=1$. If there are $k, k'\in \{1, \cdots, \ell(w)\}$ with $k<k'$
such that
\begin{enumerate}
\item[{\rm (1)}] $i_k=i_{k'}=i,$
\item[{\rm (2)}] $\{\, j' \, |\, k<j'<k',\, i_{j'}=j \, \}=\emptyset$.
\end{enumerate}
then $\redex$ is not adapted to any Dynkin quiver.
\end{corollary}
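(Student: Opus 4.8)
The plan is to recognize that this Corollary is exactly the contrapositive of Lemma \ref{Lem:adapted/nonadapted}, once one performs a small combinatorial reduction so that the pair $(k,k')$ fits the hypothesis of that Lemma. So first I would argue by contradiction: suppose $\redex$ \emph{were} adapted to some Dynkin quiver $Q$. I then want to produce an index $t$ with $k<t<k'$ and $i_t=j$, which contradicts assumption (2).

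To invoke Lemma \ref{Lem:adapted/nonadapted} I must arrange that $k'$ is the \emph{first} occurrence of $i$ strictly after $k$, since the Lemma is phrased for $k'=\min\{k_1\mid k_1>k,\ i_{k_1}=i\}$. This is the reduction step. Among all pairs $(k,k')$ satisfying conditions (1) and (2) I would pass to one minimizing $k'-k$. I would then check that no index strictly between $k$ and $k'$ equals $i$: indeed, if $i_{k''}=i$ for some $k<k''<k'$, then the shorter pair $(k,k'')$ also satisfies (1), and it satisfies (2) because the window $\{j'\mid k<j'<k''\}$ is contained in $\{j'\mid k<j'<k'\}$ and hence also contains no occurrence of $j$; this contradicts minimality. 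Thus $k'$ is the first occurrence of $i$ after $k$, as required.

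With this in hand I would apply the reflection-functor argument underlying Lemma \ref{Lem:adapted/nonadapted}: setting $Q_m=s_{i_m}\cdots s_{i_1}Q$, the vertex $i$ is a sink of $Q_{k-1}$ and again a sink of $Q_{k'-1}$; reflecting at the sink $i$ reverses the edge between $i$ and $j$ from $j\to i$ to $i\to j$, and this edge can only be reversed back by a reflection at $i$ or at $j$ occurring in between. Since there is no occurrence of $i$ strictly between $k$ and $k'$ (by the reduction) and no occurrence of $j$ there (by (2)), the edge remains oriented $i\to j$ in $Q_{k'-1}$, so $i$ cannot be a sink of $Q_{k'-1}$ --- contradiction.

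The one point requiring care, and the main thing to flag, is that Lemma \ref{Lem:adapted/nonadapted} is stated only for type ADE whereas this Corollary concerns an arbitrary finite-type Weyl group $\W$. This is not a genuine obstacle, because the proof of that Lemma uses nothing special to simply-laced diagrams: it relies only on the facts that reflection at a sink reverses all incident edges and that the edge joining two adjacent vertices $i,j$ with $d_{\Delta}(i,j)=1$ is flipped only by a reflection at $i$ or $j$, both of which hold for an arbitrary Dynkin diagram $\Delta$. Hence I would either invoke the Lemma's argument verbatim in the general setting or restate the single-edge reflection computation; the combinatorial reduction above is then entirely routine.
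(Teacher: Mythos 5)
Your proposal is correct and matches the paper's intent exactly: the paper gives no separate argument, stating only that the corollary ``directly follows from Lemma \ref{Lem:adapted/nonadapted}'', whose proof is precisely the sink-reflection computation you reproduce (including the reduction to the minimal $k'$, which the lemma's proof also performs via its ``Suppose $k'=\min\{\cdots\}$'' step). Your observation that the lemma is stated for type ADE while the corollary is for arbitrary finite type, and that the reflection argument carries over verbatim, is a legitimate point the paper glosses over and is handled correctly.
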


\begin{theorem} \label{thm: Upsilon Q Gamma Q}
For a Dynkin quiver of type $A_n$, $D_n$ or $E_n$, the combinatorial
AR quiver $\Upsilon_{[Q]}$  is isomorphic to the AR quiver $\Gamma_Q$.
\end{theorem}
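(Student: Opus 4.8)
The plan is to realize both quivers through the combinatorial model $A(Q)$ of Proposition \ref{Prop:IsomBtwQuivers} and then verify that the two arrow sets coincide. First I would fix an adapted reduced expression $\redez=(s_{i_1},\ldots,s_{i_N})\in[Q]$ and attach to each vertex $\beta^\redez_k$ of $\Upsilon_{[Q]}$ the coordinate $(i_k,m_k)$, where $m_k=\#\{t<k \mid i_t=i_k\}+1$ is the occurrence count of the residue $i_k$. This is exactly the labeling through which $\iota_Q$ identifies $A(Q)$ with $\Gamma_Q$ in Proposition \ref{Prop:IsomBtwQuivers}, and it is independent of the choice of $\redez\in[Q]$ (by Proposition \ref{Prop:AR_class} together with Proposition \ref{Prop:IsomBtwQuivers}); since $\Phi(w_0)=\Phi^+$, it furnishes a common bijection of the vertex sets of both quivers with $\Phi^+$. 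It then remains to show that, under this identification, the arrows of $\Upsilon_{[Q]}$ agree with those of $A(Q)$. Because the type is ADE, every edge of $\Delta$ is simple, so all arrows carry color $1$ and the multiplicity bookkeeping in (Q3) of Algorithm \ref{Alg_AbsAR} is vacuous.

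By condition (Q2)(ii), every arrow of $\Upsilon_{[Q]}$ joins vertices whose residues $i,j$ satisfy $d_\Delta(i,j)=1$, and the same holds in $A(Q)$, whose arrows run only along edges of $Q$. Hence I would fix an edge $i-j$ of $\Delta$ and analyze the two quivers pair by pair. By Lemma \ref{Lem:adapted/nonadapted}, between two consecutive occurrences of $i$ in $\redez$ there lies an occurrence of $j$, and symmetrically; consequently the subsequence of positions whose residue lies in $\{i,j\}$, say $r_1<r_2<\cdots<r_T$, has strictly alternating residues. Condition (Q2)(iii) then forces the arrows of $\Upsilon_{[Q]}$ involving the pair $(i,j)$ to be exactly $\beta^\redez_{r_{a+1}}\to\beta^\redez_{r_a}$ for $1\le a\le T-1$, that is, a single zigzag threading the alternating subsequence from later to earlier positions.

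Finally I would translate this zigzag into coordinates and match orientations. Assuming $i$ occurs first, the $s$-th occurrence of $i$ (resp. $j$) sits at coordinate $(i,s)$ (resp. $(j,s)$), so the arrows above read $(j,s)\to(i,s)$ and $(i,s+1)\to(j,s)$. To pin down the $A(Q)$ side, observe that at the first $\{i,j\}$-occurrence the vertex $i$ is a sink of $Q_{k-1}=s_{i_{k-1}}\cdots s_{i_1}(Q)$, and since no reflection at $i$ or $j$ has yet been applied the $\{i,j\}$-edge still carries its $Q$-orientation; hence that edge is $j\to i$ in $Q$. The defining rule of $\N Q$ then produces in $A(Q)$ precisely the arrows $(j,m)\to(i,m)$ and $(i,m)\to(j,m-1)$, which coincide with the list obtained from $\Upsilon_{[Q]}$ upon setting $m=s$ and $m=s+1$; the two arrow sets thus agree pair by pair, the case where $j$ occurs first being symmetric with edge $i\to j$. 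I expect the orientation bookkeeping in this last step to be the only delicate point: one must correctly tie the order of first appearance of $i$ versus $j$ in $\redez$ to the orientation of the $\{i,j\}$-edge in $Q$ through the sink condition, and align the occurrence-count index with the $A(Q)$-index so that the arrow \emph{directions}, not merely the underlying pairs, are matched.
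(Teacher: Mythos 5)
Your proposal is correct and follows essentially the same route as the paper's proof: both identify $\Upsilon_{[Q]}$ with $\Gamma_Q$ through the intermediate quiver $A(Q)$ via the occurrence-count coordinates $(i,m)$, both invoke Lemma \ref{Lem:adapted/nonadapted} to get the strict alternation of occurrences of adjacent residues $i,j$, and both tie the order of first appearance to the orientation of the edge in $Q$ through the sink condition before matching arrows edge by edge. Your write-up is, if anything, slightly more explicit than the paper's in deriving the exact zigzag of arrows from condition (Q2)(iii).
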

\begin{proof}
Let us first show that $A(Q) \simeq \Gamma_Q$ is isomorphic to the
combinatorial AR quiver $\Upsilon_{[Q]}$. Suppose the Dynkin quiver
$Q$ has an arrow from $j$ to $i$. Then we have the following
properties.
\begin{enumerate}[(i)]
\item By the construction of $A(Q)$, there is an arrow from $(j, 1)$ to $(i,1)$ in $A(Q),$
\item Recall the definition of the quiver $Q_m$ in (\ref{Eqn:quiver_sink}). Then $i$ is a sink of $Q_{k_i-1}$ and $j$ is a sink of $Q_{k_j-1}$, where
\[ k_i=\text{ min}\{k  \, | \, s_{i_k}= s_i\} < k_j=\text{ min}\{k  \, | \, s_{i_k}= s_j\}.\]
\item $\{k  \, | \, k_i<k<k_j, \, i_k=i\}=\emptyset$.
\end{enumerate}
Here, the reason for (ii) is that, in order to make the vertex $j$ a
sink in the quiver $Q_{k_j-1}$, we need the reflection $s_i$ which
reverses the arrow from $j$ to $i$. Also, Lemma
\ref{Lem:adapted/nonadapted} implies (iii). Using (ii), (iii) and
the construction of the combinatorial AR quiver, we conclude that
there is an arrow from $\beta^\redez_{k_j}$ to $\beta^\redez_{k_i}$
in $\Upsilon_{[\redez]}$ for $\redez \in [Q]$.

Moreover, in the quiver $A(Q)$, there is an arrow  from $(i, m)$ to
$(j, m-1)$ if
\begin{equation} \label{Eqn:2.4_0405}
\#\{s_{i_k}=s_i \, | \,  1\leq k\leq N\}\geq m, \qquad
\#\{s_{i_k}=s_j \, | \,  1\leq k\leq N\}\geq m-1.
\end{equation}
Let
$$ k_{i,m}=\min \left\{
 k'\, \left|
\begin{array}{l}
\text{ there exists a sequence } k_1<k_2<\cdots <k_m=k' \\
\text{ such that }  i_{k_1}=\cdots=i_{k_m}=i
\end{array}\right.
\right\}.
$$
Then by Lemma \ref{Lem:adapted/nonadapted}, we have
$k_{i}=k_{i,1}<k_{j}=k_{j,1}<k_{i,2}<k_{j,2}<k_{i,3}<\cdots$. Hence,
by the construction of $\Upsilon_{[\redez]}$, we have an arrow  from
$\beta^\redez_{k_{i,m}}$ to $\beta^\redez_{k_{j,m-1}}$ if
(\ref{Eqn:2.4_0405}) holds.

Similarly, if we have
\[ \#\{s_{i_k}=s_i \, | \,  1\leq k\leq N\}\geq m, \qquad \#\{s_{i_k}=s_j \, | \,  1\leq k\leq N\}\geq m,\]
then we have the arrow from $(j,m)$ to $(i,m)$ in $A(Q)$ and the
arrow from $\beta^\redez_{k_{j,m}}$ to $\beta^\redez_{k_{i,m}}$ in
$\Upsilon_{[\redez]}$.

As a conclusion, two quivers $A(Q)$ and  $\Upsilon_{[Q]}$ are
isomorphic by the map $\psi:A(Q) \to \Upsilon_{[Q]}$ such that
$(i,m)\mapsto \beta^\redez_{k_{i,m}}$. Recall that the quiver
isomorphism  $\iota_Q:A(Q)\to \Gamma_Q$ was defined by
$(i,m)\mapsto \beta^\redez_{k_{i,m}}$. Hence the ordinary AR quiver
$\Gamma_Q$ and the combinatorial AR quiver $\Upsilon_{[\redez]}$ are
isomorphic to each other.
\end{proof}

Now we can prove that, for any $\redex$ of $w \in W$ of any finite
type, $\Upsilon_{[\redex]}$ visualizes the convex partial order
$\prec_{[\redex]}$ on $\Phi(w)$:

\begin{theorem} \label{Thm:order_path}
The combinatorial AR quiver $\Upsilon_{[\redex]}$ visualizes the
convex partial order $\preceq_{[\redex]}$. That is
$\alpha\preceq_{[\redex]} \beta$ if and only if there is a path from
$\beta$ to $\alpha$ in $\Upsilon_{[\redex]}$.
\end{theorem}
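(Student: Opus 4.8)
The plan is to prove the two implications separately: \emph{soundness} (a path from $\beta$ to $\alpha$ forces $\alpha \preceq_{[\redex]} \beta$) and \emph{completeness} ($\alpha \preceq_{[\redex]} \beta$ forces the existence of such a path). The case $\alpha=\beta$ being trivial (take the length-zero path), I assume $\alpha\neq\beta$ throughout, so that the relevant relation is the strict one $\prec_{[\redex]}$.

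For soundness I would first treat a single arrow $\beta\to\alpha$. The essential observation is that, by Proposition \ref{Prop:AR_class}, the quiver $\Upsilon_{[\redex]}$ — together with its arrow set, viewed as a set of ordered pairs of roots in $\Phi(w)$ — depends only on the class $[\redex]$. Hence for \emph{every} $\redex'\in[\redex]$ the arrow $\beta\to\alpha$ must also be produced by Algorithm \ref{Alg_AbsAR} applied to $\redex'$; in particular condition (Q2)(i) must hold relative to the positions in $\redex'$, i.e. writing $\alpha=\beta^{\redex'}_{j'}$ and $\beta=\beta^{\redex'}_{k'}$ we get $k'>j'$, that is $\alpha<_{\redex'}\beta$. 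As $\redex'$ was arbitrary, $\alpha\prec_{[\redex]}\beta$. For a general path $\beta=\gamma_0\to\gamma_1\to\cdots\to\gamma_s=\alpha$, each arrow gives $\gamma_{t+1}\prec_{[\redex]}\gamma_t$, and since $\prec_{[\redex]}$ is a partial order (an intersection of the total orders $<_{\redex'}$), transitivity yields $\alpha\prec_{[\redex]}\beta$. I would also record that (Q2)(i) shows positions strictly decrease along every arrow in every representative, so $\Upsilon_{[\redex]}$ is acyclic.

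For completeness I would argue by contraposition using Corollary \ref{cor: imcom adja}. Suppose there is no directed path between $\beta$ and $\alpha$ in either direction. Then Corollary \ref{cor: imcom adja} produces $\redex',\redex''\in[\redex]$ realizing both relative orders of $\alpha$ and $\beta$; in particular some $\redex''\in[\redex]$ has $\beta<_{\redex''}\alpha$, whence $\alpha\not\prec_{[\redex]}\beta$. Contrapositively, $\alpha\prec_{[\redex]}\beta$ forces the existence of a directed path between them. By acyclicity this path runs in exactly one direction; were it directed from $\alpha$ to $\beta$, soundness would give $\beta\prec_{[\redex]}\alpha$, contradicting antisymmetry since $\alpha\neq\beta$. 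Hence the path runs from $\beta$ to $\alpha$, as desired.

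The main obstacle is concentrated entirely in the completeness direction, and more precisely it is already absorbed into Corollary \ref{cor: imcom adja}: the genuinely substantive content is that two roots \emph{unconnected} by a path can always be brought into adjacency, in \emph{both} relative orders, by commutation moves (this is where one must actually manipulate reduced words, as in the proof of Proposition \ref{the prop}). Granting that corollary together with Proposition \ref{Prop:AR_class}, the theorem reduces to the bookkeeping above; everything else — soundness and the determination of the path's direction — is formal, using only (Q2)(i) and the fact that $\prec_{[\redex]}$ is an antisymmetric, transitive relation obtained as an intersection of total orders.
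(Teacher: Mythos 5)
Your proposal is correct and follows essentially the same route as the paper: the forward direction via condition (Q2)(i) of Algorithm \ref{Alg_AbsAR} together with the invariance of $\Upsilon_{[\redex]}$ under commutation equivalence (which the paper dismisses as obvious), and the converse by contraposition from the adjacency statement recorded in Corollary \ref{cor: imcom adja} (the paper invokes the proof of Proposition \ref{Lemma:converse} directly and performs the swap explicitly). Your additional remarks on acyclicity and on pinning down the direction of the path are correct refinements of details the paper leaves implicit.
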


\begin{proof}
It is obvious that then there is a path from $\beta$ to $\alpha$ in
$\Upsilon_{[\redex]}$ then we have $\alpha \prec_{[\redex]} \beta$.

Conversely, suppose that there is no path between $\beta$ and
$\alpha$ and $\pi_\redex(\beta)<\pi_\redex(\alpha)$. In the proof of
Proposition \ref{Lemma:converse}, we showed that there is a reduced
expression $\redex'=(s_{i_1}, \cdots, s_{i_{\ell(w)}})\in [\redex]$
such that $\alpha=\beta^{\redex'}_{k+1}$ and
$\beta=\beta^{\redex'}_{k}$ and $(\alpha_{i_k},
\alpha_{i_{k+1}})=0$. Hence by exchanging $s_{i_k}$ and
$s_{i_{k+1}}$ in $\redex'$, we get $\redex''=(s_{i_1}, \cdots,
s_{i_{k-1}},s_{i_{k+1}}, s_{i_k}, s_{i_{k+2}}, \cdots,
s_{i_{\ell(w)}})\in [\redex]$. Since
$\pi_{\redex''}(\beta)=k+1>\pi_{\redex''}(\alpha)=k$, we conclude
that $\beta$ and $\alpha$ are not comparable via
$\preceq_{[\redex]}$.
\end{proof}


\section{Combinatorial reflection maps}\label{sec: reflection map}

The following theorem is a well-known fact about sinks and sources
of a Dynkin quiver $Q$ and an AR quiver $\Gamma_Q$.

 \begin{theorem} Let $Q$ be a Dynkin quiver of type $A_n$, $D_n$, or $E_n$ and $\Gamma_Q$ be the associated AR quiver. The followings are equivalent.
 \begin{enumerate}
 \item[{\rm (a)}] $i\in I$ is a sink $($resp. source$)$ of $Q$.
 \item[{\rm (b)}] There are reduced expressions $\redez$ adapted to $Q$ such that $\redez$ starts $($resp. ends$)$ with $s_i$ $($resp. $s_{i^*})$.
 \item[{\rm (c)}] $\alpha_i$ is a sink $($resp. source$)$ of $\Gamma_Q$.
 \end{enumerate}
 \end{theorem}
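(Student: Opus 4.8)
The plan is to split the three-way equivalence into the two ``sink'' implications (a)$\Leftrightarrow$(b) and (b)$\Leftrightarrow$(c), prove these directly, and then deduce the ``source'' half by passing to the opposite quiver $Q^{\mathrm{op}}$ (obtained by reversing every arrow of $Q$). I would begin with the sink case. The implication (b)$\Rightarrow$(a) is immediate from Definition \ref{def: adapted}: if $\redez=(s_{i_1},\dots ,s_{i_N})\in[Q]$ begins with $s_i$, then $i=i_1$ is forced to be a sink of $Q_0=Q$. For (a)$\Rightarrow$(b) I would run the sink--reflection procedure but initialize it at the given sink $i$; by \eqref{eq: well-known adapted}(ii) this procedure completes to a reduced expression of $w_0$ adapted to $Q$, and the freedom to pick $i$ as the first sink yields some $\redez\in[Q]$ beginning with $s_i$.

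The bridge to (c) is the remark that the first root $\beta^\redez_1=\alpha_{i_1}$ is always simple and is the $<_\redez$-minimum, hence $\prec_{[\redez]}$-minimal; by Proposition \ref{Prop:par_ord/ar} this is precisely a sink of $\Gamma_Q$. Thus $\redez$ starts with $s_i$ iff $\beta^\redez_1=\alpha_i$ is a sink of $\Gamma_Q$. Conversely, a sink of $\Gamma_Q$ is a $\prec_{[Q]}$-minimal root, and using Corollary \ref{cor: imcom adja} I would ``bubble'' such a minimal root to the front: every root before it in a given $\redez$ is incomparable to it, so the requisite adjacent transpositions are available, and iterating produces a $\redez\in[Q]$ realizing it as $\beta^\redez_1$. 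In particular a minimal root is automatically simple. This settles (b)$\Leftrightarrow$(c) for sinks and identifies the sinks of $\Gamma_Q$ as exactly $\{\alpha_i : i\text{ a sink of }Q\}$.

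For the source case the involution $^*$ enters through the dual computation of the \emph{last} letter. Writing $w_0=s_{i_1}\cdots s_{i_N}$ one has $\beta^\redez_N=s_{i_1}\cdots s_{i_{N-1}}(\alpha_{i_N})=w_0\,s_{i_N}(\alpha_{i_N})=-w_0(\alpha_{i_N})=\alpha_{i_N^{*}}$ by \eqref{eq: star involution}; so the last root is the simple root $\alpha_{i_N^*}$, and it is the $<_\redez$-maximum, i.e.\ a source of $\Gamma_Q$. Running the argument of the previous paragraph at the other end, $\redez$ ends with $s_{i^*}$ iff $\beta^\redez_N=\alpha_i$ is a source of $\Gamma_Q$, and every source is simple and realizable as a last root. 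This gives (b)$\Leftrightarrow$(c) for sources, with the $^*$ appearing solely through $\beta^\redez_N=\alpha_{i_N^*}$. To close the loop with (a) I would use that $i$ is a source of $Q$ iff $i$ is a sink of $Q^{\mathrm{op}}$; the already-proved sink equivalence applied to $Q^{\mathrm{op}}$ gives ``$i$ source of $Q$ $\Leftrightarrow$ $\alpha_i$ a sink of $\Gamma_{Q^{\mathrm{op}}}$.'' Invoking the standard duality $D=\operatorname{Hom}_\C(-,\C)\colon \Gamma_{Q^{\mathrm{op}}}\xrightarrow{\ \sim\ }(\Gamma_Q)^{\mathrm{op}}$, which reverses all arrows and sends $S(i)\mapsto S(i)$ and hence $\alpha_i\mapsto\alpha_i$, a sink of $\Gamma_{Q^{\mathrm{op}}}$ labelled $\alpha_i$ becomes a source of $\Gamma_Q$ labelled $\alpha_i$, yielding (a)$\Leftrightarrow$(c) for sources.

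I expect the source direction to be the delicate part, for two reasons. First, the appearance of $^*$ must be tied precisely to the last-root identity $\beta^\redez_N=\alpha_{i_N^*}$ and must \emph{not} be confused with the label-matching under $Q\leftrightarrow Q^{\mathrm{op}}$, where simple roots in fact correspond by the \emph{same} index; keeping these two sources of ``twisting'' separate is the main bookkeeping hazard. Second, the ``a minimal (resp.\ maximal) root can be moved to the front (resp.\ the end)'' step, though intuitively clear, needs the incomparability-to-adjacency statement of Corollary \ref{cor: imcom adja} to be applied iteratively and with care. I would also want to quote the identification $\Gamma_{Q^{\mathrm{op}}}\cong(\Gamma_Q)^{\mathrm{op}}$ in a form compatible with the coordinate model $A(Q)$ of Proposition \ref{Prop:IsomBtwQuivers}, so that ``simple roots preserved'' is unambiguous.
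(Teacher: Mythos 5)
The paper does not actually prove this statement: it is quoted at the head of Section \ref{sec: reflection map} as ``a well-known fact'' with no argument supplied, so there is no in-paper proof to compare yours against. Judged on its own, your outline is correct and, pleasingly, can be run almost entirely on machinery the paper does develop: the identity $\beta^{\redez}_N=\alpha_{i_N^*}$ is exactly the paper's Lemma preceding Proposition \ref{prop: generalized reflection}, the order--path dictionary is Proposition \ref{Prop:par_ord/ar}, and the ``bubbling'' of a $\prec_{[Q]}$-minimal (resp.\ maximal) root to the first (resp.\ last) position is legitimately powered by Corollary \ref{cor: imcom adja} together with the observation, made explicit in the proof of Theorem \ref{Thm:order_path}, that incomparable roots sitting at adjacent positions have orthogonal residues and hence commuting reflections. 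Your reduction of ``sink of $\Gamma_Q$'' to ``$\prec_{[Q]}$-minimal'' and back is also sound, since $<_{\redez}$-minimality of $\beta^{\redez}_1$ already forbids any $\gamma\prec_{[\redez]}\beta^{\redez}_1$.

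Two steps deserve tightening. First, in (a)$\Rightarrow$(b) for sinks you assert that the greedy sink-reflection procedure ``initialized at $i$'' completes; \eqref{eq: well-known adapted}(ii) does not literally say this. The clean fix: take any $\redez\in[Q]$ and let $k$ be the first position with $i_k=i$; since two distinct sinks of a quiver are never adjacent and reflecting at a non-neighbour of $i$ leaves $i$ a sink, every $i_t$ with $t<k$ is a sink of $Q_{t-1}$ distinct from and non-adjacent to $i$, so $s_{i_k}$ commutes past all of them and Proposition \ref{Prop:Q-W} keeps the result in $[Q]$. Second, the duality $\Gamma_{Q^{\mathrm{op}}}\simeq(\Gamma_Q)^{\mathrm{op}}$ is imported from outside the paper, and the paper's arrow convention for $\Gamma_Q$ is pinned down by the combinatorial model $A(Q)$ rather than by Definition \ref{Def:AR}; you should either verify the label-preserving reversal directly on $A(Q)$ (using that $\mathsf{a}_i$ and $\mathsf{b}_i$ swap under $Q\mapsto Q^{\mathrm{op}}$), or bypass $Q^{\mathrm{op}}$ altogether, since once (b)$\Leftrightarrow$(c) is known for sources, (a)$\Leftrightarrow$(b) for sources follows from the fact that $s_{i_N}\cdots s_{i_1}Q$ is $Q$ with vertices relabelled by $^*$ --- another standard fact, but one that keeps the whole argument on the quiver side. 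Neither point is a fatal gap; both are the bookkeeping hazards you yourself flagged.
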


Let $X_n$ be a simply laced type, i.e., $X$ is $A,D$ or $E$, and $n$ is a proper integer depending of $X$.
 On the set of AR quiver $\Gamma_{\lf Q \rf}=\{ \, \Gamma_Q \, |\, \text{ $Q$ is a Dynkin quiver of type $X_n$} \}$, for $i\in I$,
define {\it right $($resp. left$)$ reflection map}
 \[ r_i : \Gamma_{\lf Q \rf}\to \Gamma_{\lf Q \rf}\] by   $\Gamma_Q \mapsto \Gamma_Q r_i\ (\text{resp. }\Gamma_Q \mapsto \Gamma_Q r_i)$, where
 \begin{equation}
\Gamma_Q r_i=\left\{
\begin{array}{ll}
 \Gamma_{s_i(Q)} & \text{ if $i$ is a sink  in $Q$,}\\
 \Gamma_{Q} & \text{ otherwise,} \text{ and }
 \end{array}\right.
r_i \Gamma_Q  =\left\{
\begin{array}{ll}
 \Gamma_{s_{i^*}(Q)} & \text{ if $i^*$ is a source  in $Q$,}\\
 \Gamma_{Q} & \text{ otherwise.}
 \end{array}\right.
 \end{equation}

\begin{remark}
The reflection maps in this section can be understood as a combinatorial version of the reflection functors in
the representation theory over the path algebra $\C Q$ of a quiver $Q$ (see \cite[Section 3]{Krause} for reference).
\end{remark}

 \begin{example} Let $\redez=(s_3, s_1, s_2, s_4, s_1, s_3, s_5, s_2, s_4, s_1, s_3, s_5, s_2, s_1, s_4)$ of $A_5$. Note that
 $\redez$ is adapted one. Then $\alpha_3$ is a sink of $\Gamma_{[\redez]}$ and $\alpha_2$ is a source of $\Gamma_{[\redez]}$.
\begin{align*}
&  \scalebox{0.84}{\xymatrix@C=1ex@R=2ex{
    [5]\ar@{->}[dr]  && [4]\ar@{->}[dr] &&[2,3] \ar@{->}[dr] && [1] \\
 &[4,5] \ar@{->}[ur]\ar@{->}[dr]&&[2,4] \ar@{->}[ur]\ar@{->}[dr]&&[1,3] \ar@{->}[ur]\ar@{->}[dr]\\
 && [2,5]\ar@{->}[ur]\ar@{->}[dr] && [1,4]\ar@{->}[ur]\ar@{->}[dr] && [3] \\
 &  [2] \ar@{->}[ur]\ar@{->}[dr]  && [1,5] \ar@{->}[ur] \ar@{->}[dr] && [3,4] \ar@{->}[ur]\\
 & &[1,2] \ar@{->}[ur] &&[3,5] \ar@{->}[ur] }} \quad  r_3  \quad
 = \scalebox{0.84}{ \xymatrix@C=1ex@R=2ex{
   & [5]\ar@{->}[dr]  && [3,4]\ar@{->}[dr] &&[2] \ar@{->}[dr] && [1] \\
 &&[3,5] \ar@{->}[ur]\ar@{->}[dr]&&[2,4] \ar@{->}[ur]\ar@{->}[dr]&&[1,2] \ar@{->}[ur]\\
&  [3]  \ar@{->}[ur] \ar@{->}[dr]&& [2,5]\ar@{->}[ur]\ar@{->}[dr] && [1,4]\ar@{->}[ur]\ar@{->}[dr] \\
 & &  [2,3] \ar@{->}[ur]\ar@{->}[dr]  && [1,5] \ar@{->}[ur] \ar@{->}[dr] && [4] \\
 && &[1,3] \ar@{->}[ur] && [4,5] \ar@{->}[ur] }} \allowdisplaybreaks\\
 & r_4 \scalebox{0.84}{ \xymatrix@C=1ex@R=2ex{
   & [5]\ar@{->}[dr]  && [4]\ar@{->}[dr] &&[2,3] \ar@{->}[dr] && [1]\\
 &&[4,5] \ar@{->}[ur]\ar@{->}[dr]&&[2,4] \ar@{->}[ur]\ar@{->}[dr]&&[1,3] \ar@{->}[ur]\ar@{->}[dr]\\
& & & [2,5]\ar@{->}[ur]\ar@{->}[dr] && [1,4]\ar@{->}[ur]\ar@{->}[dr] && [3]\\
 & &  [2] \ar@{->}[ur]\ar@{->}[dr]  && [1,5] \ar@{->}[ur] \ar@{->}[dr] && [3,4] \ar@{->}[ur]\\
 && &[1,2] \ar@{->}[ur] &&[3,5] \ar@{->}[ur]}}
 = \scalebox{0.84}{\xymatrix@C=1ex@R=2ex{
    [5]\ar@{->}[dr]  && [4]\ar@{->}[dr] &&[3] \ar@{->}[dr] && [1,2]  \ar@{->}[dr]\\
 &[4,5] \ar@{->}[ur]\ar@{->}[dr]&&[3,4] \ar@{->}[ur]\ar@{->}[dr]&&[1,3] \ar@{->}[ur]\ar@{->}[dr] && [2] \\
 && [3,5]\ar@{->}[ur]\ar@{->}[dr] && [1,4]\ar@{->}[ur]\ar@{->}[dr] && [2,3]  \ar@{->}[ur]\\
 &&& [1,5] \ar@{->}[ur] \ar@{->}[dr] && [2,4] \ar@{->}[ur]\\
 &&[1] \ar@{->}[ur] &&[3,5] \ar@{->}[ur]}}
\end{align*}
 \end{example}
Let $i$ be a sink (resp. source) in $Q$. The right (resp. left) reflection map $r_i$ on $\Gamma_Q$ can be described as follows: 
\begin{eqnarray} &&
\parbox{103ex}{
\begin{itemize}
\item[{\rm (i)}] Delete the sink (resp. source) $\al_i$ (resp. $\al_{i^*}$) in $\Gamma_Q$.
\item[{\rm (ii)}] Put a new vertex $\al_i$ (resp. $\al_{i^*}$) with residue $i^*$  at the beginning (resp. end) of $\Gamma_{Q}$ and arrows starting from $\al_i$ (resp. ending at  $\al_{i^*}$)  and ending at the first vertices (resp. starting from the last vertices) with residues $j$ such that $d_{\Delta}(i^*,j)=1$.


\item[{\rm (iii)}] Change each label $\beta$ in $\Phi^+ \setminus \{ \al_i\}$ (resp. $\Phi^+ \setminus \{ \al_{i^*}\}$) with $s_i \beta$ (resp. $s_{i^*} \beta$).
\end{itemize}
}\label{ref Q h}
\end{eqnarray}

\vskip 3mm

Analogously, we can define reflection maps on combinatorial AR
quivers. In order to do this, we need notions of source and sink of
commutation classes $[\redex]$ of $\W$.

\begin{definition}
For a commutation equivalence class $[\redex]$, we say that $i\in I$
is a {\it sink} (resp. {\it source}) if there is a reduced
expression $\redex'\in [\redex]$ of $w$ starting with $s_i$  (resp.
ending with $s_i$).
\end{definition}

The following proposition follows from the construction of the
combinatorial AR quiver $\Upsilon_{[\redex]}$ and \eqref{eq:
cahracterization}:

\begin{proposition} \hfill
\begin{enumerate}
\item[{\rm (a)}]  $i$ is a sink of $[\redex]$ if and only if $\alpha_i$ is a sink in the quiver $\Upsilon_{[\redex]}$.
\item[{\rm (b)}] $i$ is a source of $[\redex]$ if and only if  $-w(\alpha_i)$ is a source in the quiver $\Upsilon_{[\redex]}$.
\end{enumerate}
\end{proposition}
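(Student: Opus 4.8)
The plan is to reduce both parts to the behaviour of the two extremal roots of a reduced word and then to transport the conclusion across the commutation class via Proposition~\ref{Prop:AR_class} and Theorem~\ref{Thm:order_path}. Fix any $\redex'=(s_{i'_1},\dots,s_{i'_{\ell(w)}})\in[\redex]$. Its first root is the simple root $\beta^{\redex'}_1=\alpha_{i'_1}$, and its last root is
\[
\beta^{\redex'}_{\ell(w)}=s_{i'_1}\cdots s_{i'_{\ell(w)-1}}(\alpha_{i'_{\ell(w)}})=w\,s_{i'_{\ell(w)}}(\alpha_{i'_{\ell(w)}})=-w(\alpha_{i'_{\ell(w)}}).
\]
Under the convention of Algorithm~\ref{Alg_AbsAR} arrows point from larger to smaller position, so a vertex is a sink (resp.\ source) exactly when it has no outgoing (resp.\ incoming) arrow; by Theorem~\ref{Thm:order_path} these are precisely the minimal (resp.\ maximal) elements of $(\Phi(w),\prec_{[\redex]})$.

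For part (a) the forward implication is immediate: if $i$ is a sink of $[\redex]$, choose $\redex'\in[\redex]$ with $i'_1=i$; then $\alpha_i=\beta^{\redex'}_1$ is the first vertex, which by (Q2) has no outgoing arrow, and $\Upsilon_{[\redex]}=\Upsilon_{[\redex']}$. For the converse, suppose $\alpha_i$ is a sink, hence minimal for $\prec_{[\redex]}$, fix $\redex\in[\redex]$, and let $\beta^\redex_k=\alpha_i$. By minimality no earlier root satisfies $\beta^\redex_j\prec_{[\redex]}\alpha_i$; and $\alpha_i\prec_{[\redex]}\beta^\redex_j$ would force $k<j$, contradicting $j<k$. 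Thus every $\beta^\redex_j$ with $j<k$ is incomparable to $\alpha_i$, so $(\beta^\redex_j,\alpha_i)=0$ by Proposition~\ref{Lemma:converse}. The adjacent pair $\beta^\redex_{k-1},\beta^\redex_k$ then satisfies $(\beta^\redex_{k-1},\beta^\redex_k)=0$ iff $(\alpha_{i_{k-1}},\alpha_{i_k})=0$ (the $l-k=1$ computation in Proposition~\ref{Lemma:converse}), so their orthogonality forces $s_{i_{k-1}}$ and $s_{i_k}$ to commute, and we may exchange them inside $[\redex]$, moving $\alpha_i$ to position $k-1$. Iterating slides $\alpha_i$ to the first position of some $\redex''\in[\redex]$, and then $\alpha_i=\beta^{\redex''}_1=\alpha_{i''_1}$ gives $i''_1=i$, so $i$ is a sink of $[\redex]$.

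Part (b) is proved symmetrically, replacing ``first/outgoing/minimal'' by ``last/incoming/maximal'' and using the last-root identity. If $i$ is a source, take $\redex'\in[\redex]$ with $i'_{\ell(w)}=i$; its last vertex is $-w(\alpha_i)=\beta^{\redex'}_{\ell(w)}$, which has no incoming arrow, so $-w(\alpha_i)$ is a source of $\Upsilon_{[\redex]}$. Conversely, if $-w(\alpha_i)$ is a source it is maximal for $\prec_{[\redex]}$; the same orthogonality-and-commutation argument, run from the top, produces $\redex''\in[\redex]$ with $\beta^{\redex''}_{\ell(w)}=-w(\alpha_i)$. Comparing with the last-root identity gives $-w(\alpha_{i''_{\ell(w)}})=-w(\alpha_i)$, whence $i''_{\ell(w)}=i$ and $i$ is a source of $[\redex]$.

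The point requiring care, and the reason both implications must be argued at the extreme positions rather than by inspecting residues, is that the residue of a vertex labelled $\alpha_i$ need \emph{not} equal $i$ when $\alpha_i$ occurs in the interior of a word: for instance $\redex=(s_1,s_2,s_1)$ in type $A_2$ has $\beta^\redex_3=\alpha_2$ with residue $1$. Only at the first (resp.\ last) position do the identities $\beta^{\redex'}_1=\alpha_{i'_1}$ (resp.\ $\beta^{\redex'}_{\ell(w)}=-w(\alpha_{i'_{\ell(w)}})$) recover the index, which is why the minimal (resp.\ maximal) root must be slid to that position. The commutation moves stay legal throughout because incomparability to $\alpha_i$ (equivalently, orthogonality, by Proposition~\ref{Lemma:converse}) is a property of the roots themselves and is unaffected by rewriting the word; alternatively, one may invoke Theorem~\ref{Thm: compatible reading} and simply read the minimal (resp.\ maximal) vertex first (resp.\ last).
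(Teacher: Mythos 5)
Your proof is correct, and it is essentially the argument the paper intends: the paper states this proposition without proof, remarking only that it ``follows from the construction of $\Upsilon_{[\redex]}$ and \eqref{eq: cahracterization}'', and your identification of the first and last roots as $\beta^{\redex'}_1=\alpha_{i'_1}$ and $\beta^{\redex'}_{\ell(w)}=-w(\alpha_{i'_{\ell(w)}})$, combined with sliding a minimal (resp.\ maximal) vertex to the first (resp.\ last) position via the orthogonality/commutation mechanism of Proposition~\ref{Lemma:converse}, is exactly the fleshing-out of that remark. The only streamlining available is that the converse directions follow immediately from Theorem~\ref{Thm: compatible reading} (read the sink first, or the source last), which you already note as an alternative.
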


Using sources and sinks of a commutation equivalence class, we shall
define a reflection map on the set of combinatorial AR quivers
$$\Upsilon_{w_0} \seteq \{ \, \Upsilon_{[\redez]} \, |\, \redez\text{ is a reduced expression of $w_0$ }\}$$
and divide the set $\Upsilon_{w_0}$ into the orbits
$\Upsilon_{\lf\redez \rf}$ of reflection maps (see also
Definition \ref{def: ref equi} below):
$$\Upsilon_{w_0} = \bigsqcup_{\lf \redez \rf} \Upsilon_{\lf\redez \rf}$$

\begin{definition}
The right reflection map $r_i$ on $[\redez]$ is
defined by
$$[\redez]\, r_i =
\left\{
\begin{array}{ll}
[(s_{i_2},\cdots, s_{i_N}, s_{i^*})] & \text{ if $i$ is a sink and } \redez'=(s_{i}, s_{i_2}, \cdots, s_{i_N})\in [\redez],\\
\ [\redez] &  \text{ if $i$ is not a sink of $[\redez]$}.
\end{array}
\right.
$$
On the other hand, the left reflection map $r_i$
on $[\redez]$ is defined by
$$
r_i \, [\redez]= \left\{
\begin{array}{ll}
[(s_{i^*},s_{i_1}\cdots, s_{i_{N-1}})] &  \text{ if $i$ is a source and } \redez'=(s_{i_1}, \cdots, s_{i_{N-1}}, s_i)\in [\redez],\\
\ [\redez] &  \text{ if $i$ is not a source of $[\redez]$}.
\end{array}
\right.
$$
\end{definition}

\begin{lemma}
For a reduced expression $\redez=(s_{i_1},\ldots, s_{i_N})$ of
$w_0$, define $\redex_{< k} = s_{i_1}\cdots s_{i_{k-1}}$ for $1 \le
k \le N$. Then
$$ \redex_{< N}(\alpha_{i_N}) = \al_{i_{N}^*}.$$
\end{lemma}

\begin{proof}
$ \redex_{< N}(\alpha_{i_N}) = w_0 \cdot s_{i_N} (\al_{i_N})=
w_0(-\al_{i_N}) = \al_{i^*_N}$.
\end{proof}

The following propositions show the reflection map is
well-defined on
$$\{ \, [\redez] \, |\, \redez\text{ is a reduced expression of $w_0$ }\}.$$

\begin{proposition} \label{prop: generalized reflection}
Let $\redez=(s_{i_1}, \cdots, s_{i_{N-1}},s_{i_N})$ be a reduced
expression of $w_0$.
\begin{enumerate}
\item[{\rm (a)}] $\redez'=(s_{i^*_N},s_{i_1}, \cdots, s_{i_{N-1}})$ is a reduced expression of $w_0$ which is not in $[\redez]$.
\item[{\rm (b)}] $\redez''=(s_{i_2},\cdots, s_{i_{N-1}},s_{i_N},s_{i_1^*})$ is a reduced expression of $w_0$ which is not in $[\redez]$.
\end{enumerate}
\end{proposition}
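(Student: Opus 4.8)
The plan is to separate the two kinds of claims. That $\redez'$ and $\redez''$ are reduced words for $w_0$ is a one-line computation using the conjugation identity behind the $^*$-involution; that neither lies in $[\redez]$ will follow from Theorem \ref{Thm:AR_class} together with the observation that a single positive root occupies opposite extremal positions in the two quivers.

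First I would record that \eqref{eq: star involution} gives $s_{j^*}=w_0 s_j w_0^{-1}$ for every $j\in I$, since $w_0(\al_j)=-\al_{j^*}$ and the reflection in $-\al_{j^*}$ is $s_{j^*}$; equivalently $s_{j^*}w_0=w_0 s_j$ and $w_0 s_{j^*}=s_j w_0$ (recall $w_0=w_0^{-1}$). For (a), from $w_0=s_{i_1}\cdots s_{i_N}$ we have $s_{i_1}\cdots s_{i_{N-1}}=w_0 s_{i_N}$, so
\[ s_{i_N^*}\, s_{i_1}\cdots s_{i_{N-1}} = s_{i_N^*}\,w_0\,s_{i_N} = w_0\,s_{i_N}\,s_{i_N}=w_0. \]
For (b), from $s_{i_2}\cdots s_{i_N}=s_{i_1}w_0$ we get
\[ s_{i_2}\cdots s_{i_N}\,s_{i_1^*} = s_{i_1}\,w_0\,s_{i_1^*} = s_{i_1}\,s_{i_1}\,w_0 = w_0. \]
As each of $\redez'$ and $\redez''$ is a word of exactly $N=\ell(w_0)$ letters representing $w_0$, it is automatically reduced.

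For the inequivalence in (a), the last root of $\redez$ is $\beta^\redez_N=s_{i_1}\cdots s_{i_{N-1}}(\al_{i_N})=\al_{i_N^*}$ by the Lemma immediately preceding this proposition; since every arrow of a combinatorial AR quiver runs from a larger to a smaller position by {\rm (Q2)}, the maximal-position vertex $\beta^\redez_N$ receives no incoming arrow and is therefore a source of $\Upsilon_{[\redez]}$. In $\redez'$ the first letter is $s_{i_N^*}$, so $\beta^{\redez'}_1=\al_{i_N^*}$ is the minimal-position vertex, hence a sink of $\Upsilon_{[\redez']}$. If $[\redez]=[\redez']$, then Theorem \ref{Thm:AR_class} gives $\Upsilon_{[\redez]}=\Upsilon_{[\redez']}$, so the vertex $\al_{i_N^*}$ would be simultaneously a source and a sink, i.e.\ isolated; but for $w_0$ every index of $I$ occurs, so $\Upsilon_{[\redez]}$ is connected with $N\ge 2$ vertices and has no isolated vertex, a contradiction. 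Part (b) is the mirror image: the first root of $\redez$ is $\beta^\redez_1=\al_{i_1}$, a sink, whereas $\redez''$ ends in $s_{i_1^*}$, so by the same Lemma its last root is $\al_{(i_1^*)^*}=\al_{i_1}$, a source of $\Upsilon_{[\redez'']}$; the identical argument rules out $[\redez]=[\redez'']$.

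The main thing to watch is the degenerate rank-one situation (type $A_1$), where $i_N^*=i_N$ forces $\redez'=\redez$ and the statement fails; the argument above implicitly uses that $\Delta$ is connected of rank at least two, so that $\Upsilon_{[\redez]}$ is connected with more than one vertex and ``being both a source and a sink'' genuinely entails isolation. Apart from this caveat, everything reduces to the displayed computation and a single invocation of the source/sink dictionary, so I foresee no real difficulty.
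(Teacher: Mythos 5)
Your proof is correct and, for part of the statement, genuinely simpler than the paper's. For the claim that $\redez'$ and $\redez''$ are reduced expressions of $w_0$, the paper argues by contradiction: if $\redez'$ were not reduced it would represent an element of length $N-2$, and a computation inside the root system (tracking where $w^{-1}$ sends the roots $\beta^{\redez}_m$) produces the contradiction $\beta^\redez_m=-\al_{i_N^*}$. You instead observe $s_{j^*}=w_0s_jw_0^{-1}$ and compute directly that $s_{i_N^*}s_{i_1}\cdots s_{i_{N-1}}=s_{i_N^*}w_0s_{i_N}=w_0$, with reducedness free from the length count; this is a one-line replacement for the paper's half-page argument, and the authors themselves remark that they had difficulty locating a proof of this "well-known" fact. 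For the non-equivalence, the two arguments are essentially the same invariant in different clothing: the paper notes $\la_{\redez}(\al_{i_N^*})>1$ while $\la_{\redez'}(\al_{i_N^*})=1$ and invokes the commutation-invariance of the level function, whereas you note that $\al_{i_N^*}$ is a source of $\Upsilon_{[\redez]}$ but a sink of $\Upsilon_{[\redez']}$ and invoke the commutation-invariance of the quiver (since $\la_{[\redez]}(\al)=1$ exactly when $\al$ is a sink, these coincide); your version needs the extra connectedness observation to rule out an isolated vertex, which the paper's inequality $\la_{\redez}(\al_{i_N^*})>1$ also silently requires. Your caveat about type $A_1$ (where $\redez'=\redez$ and the statement fails) is a fair point that applies equally to the paper's unstated rank hypothesis.
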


\begin{proof}
{\rm (a)} Suppose $\redez'$ is not a reduced expression. Then
$\redez'$ represents $w \in \W$ whose length is $N-2$, that is
\[w=s_{i^*_N}s_{i_1} \cdots s_{i_{N-1}}\in \W \quad \text{ and }
\quad  \ell(w)=N-2.\]
 Denote $\beta^\redez_k=s_{i_1}\cdots s_{i_{k-1}}\alpha_k$ for $k=1, \cdots, N$. Recall that $w_0^{-1} \beta^\redez_k\in \Phi^-$
 and $|\, \{\beta^\redez_k\, |\, 1\leq k\leq N\}\, |=|\Phi^+|=N$. Since $\beta^\redez_N=\alpha_{i_N^*}$ and
 $|\, \Phi(w)\, |=|\,  \{\alpha\in \Phi^+| w^{-1}(\alpha)\in \Phi^-\}\, |=N-2, $ there exists $m$ such that $1\leq m\leq N-1$
 and  $w^{-1}\beta^\redez_m \in \Phi^+\backslash\{\alpha_{i_N^*}\}$. Observe that $w^{-1}  \beta^\redez_m=s_{i_N}w_0^{-1}s_{i^*_N}\beta^\redez_m\in \Phi^+$
 and $w_0^{-1}(s_{i^*_N}\beta^\redez_m)=w_0(s_{i^*_N}\beta^\redez_m)\in \Phi^-$.  Hence $w_0^{-1}s_{i_N^*}\beta^\redez_m=s_{i_N}w^{-1}\beta^\redez_m=-\alpha_{i_N}$
 and $w^{-1}\beta^\redez_m=\alpha_{i_N}$. Now we get
 \[\beta^\redez_m=w \alpha_{i_N}=s_{i_N^*}s_{i_1}\cdots s_{i_{N-1}}\alpha_{i_N}=s_{i_N^*}(\alpha_{i_N^*})=-\alpha_{i_N^*}, \]
  which is a contradiction.  Hence the length of $w$ is $N$. In other words, $w=w_0$ and $\redez'$ is a reduced expression of $w_0$.

Moreover, we have $[\redez]\neq[\redez']$ since $\la_{\redez}(\al_{i^*_N}) >1$  and $\la_{\redez'}(\al_{i^*_N})=1$. \\
{\rm (b)} The analogous proof of (a) works to (b).
\end{proof}

\begin{remark}
To the experts, the fact that $\redez'$ and $\redez''$ are also reduced expressions of $w_0$ may be well-known (for example, \cite[page 7]{CT15} and \cite[page 650]{Kato12}). However, we have had a difficulty finding its proof.
Thus we provide a proof by using the system of positive roots.
\end{remark}

\begin{proposition} \label{Prop:well-def reflec}
Let $\redez=(s_{i_1}, \cdots, s_{i_N})$ and $\redez'=(s_{i'_1},
\cdots s_{i'_N})$ be reduced expressions in $[\redez]$.
\begin{enumerate}
\item[{\rm (a)}] If $i_1=i'_1$ then $\redez^1=(s_{i_2}, \cdots, s_{i_N}, s_{i^*_1})$ and $\redez^2=(s_{i'_2}, \cdots, s_{i'_N}, s_{i^*_1})$
are in the same commutation equivalence class.
\item[{\rm (b)}] If $i_N=i'_N$ then $\redez^3=(s_{i^*_N}, s_{i_1}, \cdots, s_{i_{N-1}})$ and  $\redez^4=(s_{i^*_N}, s_{i'_1}, \cdots, s_{i'_{N-1}})$
are in the same commutation equivalence class.
\end{enumerate}
\end{proposition}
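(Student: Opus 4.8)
The plan is to reduce both parts to a single \emph{deletion step}: if two commutation equivalent reduced expressions of $w_0$ share the same first letter, then the expressions obtained by deleting that letter are again commutation equivalent. Granting this, part (a) is immediate and part (b) follows from (a) by a reversal symmetry. For (a), set $u\seteq s_{i_1}w_0$; since $i_1=i'_1$, both $(s_{i_2},\cdots,s_{i_N})$ and $(s_{i'_2},\cdots,s_{i'_N})$ are reduced expressions of $u$ obtained by deleting the common initial letter $s_{i_1}$ from $\redez$ and $\redez'$. Once the deletion step gives $(s_{i_2},\cdots,s_{i_N})\sim(s_{i'_2},\cdots,s_{i'_N})$, I would append the common letter $s_{i^*_1}$ to both. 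This is legitimate because $\redez^1$ and $\redez^2$ are reduced expressions of $w_0$ by Proposition \ref{prop: generalized reflection}, and any sequence of commutation relations among the first $N-1$ letters leaves the appended last letter untouched; hence $\redez^1\sim\redez^2$.

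To prove the deletion step I would invoke the combinatorial AR quiver and Theorem \ref{Thm:AR_class}. Comparing labelings in Algorithm \ref{Alg_AbsAR}, the vertices of $\Upsilon_{[(s_{i_2},\cdots,s_{i_N})]}$ are precisely $s_{i_1}\beta^\redez_k$ for $k=2,\cdots,N$, that is, the image under $s_{i_1}$ of $\Phi(w_0)\setminus\{\alpha_{i_1}\}$, while the residues and the arrow conditions (Q2)--(Q3) among the positions $2,\cdots,N$ are inherited verbatim from $\redez$. Thus, as a labeled quiver, $\Upsilon_{[(s_{i_2},\cdots,s_{i_N})]}$ is obtained from $\Upsilon_{[\redez]}$ by deleting the sink vertex $\beta^\redez_1=\alpha_{i_1}$ together with its incident arrows and relabeling each remaining vertex $\beta$ by $s_{i_1}\beta$. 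The same description applies to $\redez'$, with $\beta^{\redez'}_1=\alpha_{i'_1}=\alpha_{i_1}$. Since $\redez\sim\redez'$ forces $\Upsilon_{[\redez]}=\Upsilon_{[\redez']}$ and here we delete the \emph{same} labeled vertex $\alpha_{i_1}$ and apply the \emph{same} relabeling $s_{i_1}$, the two resulting quivers coincide; by Theorem \ref{Thm:AR_class} the deleted expressions are commutation equivalent.

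For part (b) I would pass to reversed words. Writing $\redez^{\mathsf{t}}=(s_{i_N},\cdots,s_{i_1})$, reversal is a bijection on reduced expressions of $w_0$ (as $w_0=w_0^{-1}$) carrying a commutation relation at positions $(k,k+1)$ to one at positions $(N-k,N-k+1)$, so $\redez\sim\redez'$ if and only if $\redez^{\mathsf{t}}\sim\redez'^{\mathsf{t}}$. Since $i_N=i'_N$, the reversed words share their first letter $s_{i_N}$, so part (a) yields $(s_{i_{N-1}},\cdots,s_{i_1},s_{i^*_N})\sim(s_{i'_{N-1}},\cdots,s_{i'_1},s_{i^*_N})$, and reversing these back produces exactly $\redez^3\sim\redez^4$. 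The main obstacle is the deletion step: verifying that removing a common initial letter is well defined on commutation classes is where the real content lies, since a priori a sequence of braid commutations from $\redez$ to $\redez'$ need not fix the first letter. The identification of $[\redex]$ with $\Upsilon_{[\redex]}$ via Theorem \ref{Thm:AR_class} is precisely what dissolves this difficulty, converting the question into the transparent operation of deleting a sink from a quiver; the remaining maneuvers (appending a fixed letter and reversing) are purely formal.
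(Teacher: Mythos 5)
Your proof is correct and follows essentially the same route as the paper: both arguments reduce the statement to the observation that deleting the common first letter produces, via Algorithm \ref{Alg_AbsAR}, the same subquiver of $\Upsilon_{[\redez]}=\Upsilon_{[\redez']}$, and then invoke Theorem \ref{Thm:AR_class} to get commutation equivalence of the truncations before re-attaching the letter $s_{i_1^*}$ (whose reducedness is supplied by Proposition \ref{prop: generalized reflection}). Your justifications of the re-attachment step (extending the chain of commutations verbatim) and of part (b) (reversal of words rather than the paper's ``similarly'') are somewhat more explicit than the paper's, but the substance is identical.
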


\begin{proof}
Since we proved $\redez^p$, $p=1,2,3,4$, are all reduced expressions
of $w_0$, it is enough to show that
$\Upsilon_{[\redez^1]}=\Upsilon_{[\redez^2]}$ and
$\Upsilon_{[\redez^3]}=\Upsilon_{[\redez^4]}$. If $i_1=i'_1$ then
$\Upsilon_{[\redex]}$ and $\Upsilon_{[\redex']}$ are the same
subquiver of $\Upsilon_{[\redez]}$ where $\redex=(s_{i_2}, \cdots,
s_{i_N})$ and $\redex=(s_{i'_2}, \cdots, s_{i'_N})$. By the
algorithm of combinatorial AR quivers, there is a unique way to get
another combinatorial AR quiver by putting the last vertex on the
$i_1^*$-th residue of $\Upsilon_{[\redex]}$. Since the resulting AR
quiver is $\Upsilon_{[\redez^1]}=\Upsilon_{[\redez^2]}$, we have
$[\redez^1]=[\redez^2]$. Similarly, we can show that
$[\redez^3]=[\redez^4]$.
\end{proof}

The reflecting map on $[\redez]$ induces the    right (resp.
left)  {\it reflection map} $r_i$ for $i\in I$ on
$\Upsilon_{w_0} $ as follows:
\begin{equation}
\Upsilon_{[\redez]} \, r_i=
 \Upsilon_{[\redez] \,  r_i}  \qquad  (\text{resp. } r_i\,  \Upsilon_{[\redez]} = \Upsilon_{r_i[\redez]} ).
 \end{equation}

If  $s_{i_1}$ is a sink in $[\redez]$, there is a reduced expression
$\redez=(s_{i_1}, s_{i_2}, \cdots, s_{i_N})\in [\redez]$. We know
that $[\redez] s_{i_1}= [\redez']$ where $\redez'=(s_{i'_1},
s_{i'_2}, \cdots, s_{i'_N})=(s_{i_2},\cdots, s_{i_{N}},s_{i_1^*})$.
Hence  for $k=1, \cdots, l-1$, we have
$\beta^{\redez'}_k=s_{i_1}\beta^{\redez}_{k+1}$.

 When $s_{i_N}$ is a source in $[\redez]$, there is a reduced expression $\redez=(s_{i_1}, s_{i_2}, \cdots, s_{i_N})\in [\redez]$.
 We know that $s_{i_N}[\redez]= [\redez']$ where $\redez'=(s_{i'_1}, s_{i'_2}, \cdots, s_{i'_N})=(s_{i_N^*},s_{i_1},\cdots, s_{i_{N-1}})$.
 Hence  for $k=2, \cdots, l$, we have $\beta^{\redez'}_k=s_{i_N^*}\beta^\redez_{k-1}$.

For $\redex=(s_{i_1}, \cdots, s_{i_k})$, the right  (resp. left)
action of the reflection map  $r_{\redex}$ is defined by
$$ [\redez] \, r_{\redex}=[\redez] r_{i_1} \cdots r_{i_k} \qquad (\text{resp. } r_{\redex}[\redez]=r_{i_k}\cdots r_{i_1} [\redez]).$$

 Then the right (resp. left) reflection map on $\Upsilon_{[\redez]}$ can be described as an analogue of \eqref{ref Q h}:
\begin{eqnarray} &&
\parbox{103ex}{
\begin{itemize}
\item[{\rm (i)}] Delete the sink (resp. source) $\al_i$ (resp. $\al_{i^*}$) with residue $i$ (resp. $i^*$) and arrows
incident with $\al_i$ (resp. $\al_{i^*}$) in $\Upsilon_{[\redez]}$.
\item[{\rm (ii)}] Put a new vertex $\al_i$ (resp. $\al_{i^*}$) in the end (resp. beginning) of $\Upsilon_{[\redez]}$ and arrows the conditions in Algorithm \ref{Alg_AbsAR}.
\item[{\rm (iii)}] Change each label $\beta$ in $\Phi^+ \setminus \{ \al_i\}$ (resp. $\Phi^+ \setminus \{ \al_{i^*}\}$) with $s_i \beta$ (resp. $s_{i^*} \beta$).
\end{itemize}
}\label{ref redez h}
\end{eqnarray}

\begin{example} \label{ex: relfection map}
Let us consider reduced expression $\redez=(s_1, s_2, s_1, s_3, s_4,
s_3, s_2, s_3, s_1, s_2)$ of $A_4$ which is not adapted to any
Dynkin quiver $Q$. Then we have
\begin{equation} \label{A4 non-adapted A}
\scalebox{0.84}{\xymatrix@C=2ex@R=1ex{
1 && [3,4] \ar@{->}[dr] &&& [2] \ar@{->}[drr] &&& [1] \\
2 &[3] \ar@{->}[ur]\ar@{->}[dr] && [2,4] \ar@{->}[dr]\ar@{->}[urr] &&&& [1,2] \ar@{->}[ur] \\
3 && [2,3] \ar@{->}[ur] && [4] \ar@{->}[dr] && [1,3] \ar@{->}[ur]\\
4 &&&&& [1,4] \ar@{->}[ur] \\
}}.
\end{equation}
Since $s_2$ is a source of $\redez$, we have $r_2 [\redez]=(s_3,
s_1, s_2, s_1, s_3, s_4, s_3, s_2, s_3, s_1) $ and $r_2
\Upsilon_{[\redez]}$ is
\begin{equation} \label{A4 non-adapted B}
\scalebox{0.84}{\xymatrix@C=2ex@R=1ex{
1 & [4] \ar@{->}[dr] &&& [2,3] \ar@{->}[drr] &&& [1] \\
2  && [2,4] \ar@{->}[dr]\ar@{->}[urr] &&&& [1,3] \ar@{->}[ur]\ar@{->}[dr] \\
3 & [2] \ar@{->}[ur] && [3,4] \ar@{->}[dr] && [1,2] \ar@{->}[ur] && [3]\\
4 &&&& [1,4] \ar@{->}[ur] \\
}}.
\end{equation}

\end{example}

\begin{definition} \label{def: ref equi} \hfill
\begin{enumerate}
\item Let  $[\redez]$ and $[\redez']$ be two commutation equivalence classes. We say $[\redez]$ and $[\redez']$ are
{\it reflection equivalent} and write $[\redez]\overset{r}{\sim}
[\redez']$ if $[\redez']$ can be obtained from $[\redez]$ by a
sequence of reflection maps. The family of commutation
equivalence classes $\lf \redez \rf \seteq \{\, [\redez]\, |\,
[\redez]\overset{r}{\sim}[\redez']\, \}$ is called an {\it r-cluster
point}.
\item If  $[\redez]\overset{r}{\sim}[\redez']$ then we say $\Upsilon_{[\redez]}$ and $\Upsilon_{[\redez']}$ are {\it reflection equivalent}
and write
$\Upsilon_{[\redez]}\overset{r}{\sim}\Upsilon_{[\redez']}$. Also,
$\Upsilon_{\lf \redez \rf} \seteq \{\, \Upsilon_{[\redez]}\, |\,
[\redez]\overset{r}{\sim}[\redez']\, \}$ is called an {\it r-cluster
point}.
\end{enumerate}
\end{definition}

\medskip

Now we shall observe what the equivalent classes in the same
r-cluster point share:


\begin{definition} \label{def: sigma-composition}
Let $\sigma$ be a Dynkin diagram automorphism and $k$ be the number of $\sigma$-orbits of the index set $I$. Take a sequence of $\sigma$-orbits $\mathcal{O}=(o_1, o_2, \cdots, o_k)$ where $o_i\neq o_j$ for $1\leq i< j\leq k.$
For a reduced expression $\redez=(s_{i_1}, \cdots, s_{i_N})$ of $w_0$, the {\it $\sigma$-composition} of $[\redez]$ associated to $\mathcal{O}$ is \[(\mathtt{c}_1, \mathtt{c}_2, \cdots,
\mathtt{c}_k)\in \N^k \qquad \text{ where } \ \mathtt{c}_j=| \{ s_{i_t} \ | \ i_t \in o_j \ \text{ for some } k \in \Z \}|.\]
\end{definition}

\begin{remark}
Depending on the sequence of orbits $\mathcal{O}$, we get different $\sigma$-composition. However, every $\sigma$-composition is same up to order of components. Hence, we fix  $\mathcal{O}=(o_1, o_2, \cdots, o_k)$ satisfying
\[ \, [ \, \text{smallest element in } \, o_i\, ] < [ \, \text{smallest element in }\, o_{i+1}\, ] \, \]
for all $i=1\cdots, k-1.$
\end{remark}



The well definedness of $\sigma$-composition follows by the fact
that if $\redez=(s_{i_1}, \cdots, s_{i_N})$ and $\redez'=(s_{i'_1},
\cdots, s_{i'_N})$ are in the same commutation class then
$$ \#\{ i_k\, |\, i_k = i \, \}= \#\{ i'_k\, |\, i'_k=i  \, \} \text{ for any $i \in I$}.$$

\begin{example} (1) In Example \ref{ex: relfection map}, the $^*$-composition of $[\redez]$ in \eqref{A4 non-adapted A} is
$$(4,6)$$
since there are $4$-many $s_i$ for $i=1$ or $4$ in $\redez$ and $6$-many $s_j$ for $j=2$ or $3$ in $\redez$.

(2) Let us take a Dynkin diagram involution $\sigma$ of $D_4$ as $\sigma(i)=i$ for $1 \le i \le 2$ and $\sigma(3)=4$. Then $\sigma$-composition of $[\redez]$ in Example \ref{D4 non-adapted D-1} is
$$ (4,4,4).$$

(3) Let us take a Dynkin diagram automorphism $\sigma$ of $D_4$ as $\sigma(2)=2$, $\sigma(1)=3$, $\sigma(3)=4$ and $\sigma(4)=1$. Then $\sigma$-composition of $[\redez]$ for
$\redez=(s_1,s_2,s_3,s_2,s_1,s_2,s_4,s_2,s_1,s_2,s_3,s_2)$ is
$$ (6,6).$$
\end{example}

\begin{proposition}
If  two commutation equivalence classes $[\redez]$ and $[\redez']$
of $w_0$ are in the same $r$-cluster point then
$\sigma$-compositions of $[\redez]$ and $[\redez']$ are same.
\end{proposition}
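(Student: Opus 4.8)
The plan is to reduce the assertion to the invariance of the $\sigma$-composition under a single reflection map, and then to track how the multiset of residues of a reduced expression changes. By Definition~\ref{def: ref equi}, $[\redez]\overset{r}{\sim}[\redez']$ means that $[\redez']$ is produced from $[\redez]$ by a finite sequence of right and left reflection maps $r_i$, each of which (by Propositions~\ref{prop: generalized reflection} and~\ref{Prop:well-def reflec}) carries a commutation class of reduced expressions of $w_0$ to another such class. So I would argue by induction on the length of that sequence: it is enough to show that the $\sigma$-composition of $[\redez]$ coincides with that of $[\redez]\,r_i$ for a single $i\in I$, the left reflection being entirely symmetric.

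Next I would pin down the effect of one reflection map on the sequence of residues. If $i$ is a sink of $[\redez]$, then a representative $(s_i,s_{i_2},\dots,s_{i_N})\in[\redez]$ is sent to the representative $(s_{i_2},\dots,s_{i_N},s_{i^*})\in[\redez]\,r_i$; comparing the two index sequences shows that exactly one occurrence of the index $i$ is deleted and one occurrence of $i^*$ is appended, with every other residue left in place. A left reflection likewise deletes one $i$ and inserts one $i^*$, and when $i$ is neither a sink nor a source the map is the identity. Throughout I would invoke the well-definedness remark following Definition~\ref{def: sigma-composition}, which guarantees that $\#\{t\mid i_t=i\}$ depends only on the commutation class, so that each residue count, and hence each $\mathtt{c}_j$, is a well-defined function of $[\redez]$.

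It then remains to feed this into the orbit counts $\mathtt{c}_j=\#\{\,t \mid i_t\in o_j\,\}$. The decisive point is that the indices $i$ and $i^*$ lie in the \emph{same} $\sigma$-orbit: when the involution $i\mapsto i^*$ is trivial (that is, $w_0=-\mathrm{id}$) this is immediate and in fact every individual residue count is preserved, while otherwise $\sigma$ is taken compatibly with this involution, so that $i\mapsto i^*$ permutes the $\sigma$-orbits among themselves --- as it does whenever $i\mapsto i^*$ lies in $\langle\sigma\rangle$, which is the situation in all of the Examples above. Granting this, replacing one $i$ by one $i^*$ subtracts $1$ from and adds $1$ to the same $\mathtt{c}_j$, leaving every orbit count fixed; thus each reflection map preserves the $\sigma$-composition, and the induction completes the proof. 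The one genuinely non-formal ingredient --- which I would isolate as a short lemma --- is precisely this claim that $i$ and $i^*$ always share a $\sigma$-orbit; once it is in hand, everything else is bookkeeping on multisets.
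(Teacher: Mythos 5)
Your argument is correct and is essentially the paper's own proof: both reduce to a single (left or right) reflection map, observe that it merely replaces one occurrence of the residue $i$ in the reduced expression by one occurrence of $i^*$, and conclude because $i$ and $i^*$ lie in the same $\sigma$-orbit. The only difference is one of emphasis --- you isolate the fact that $*$ preserves each $\sigma$-orbit as an explicit lemma, whereas the paper disposes of it with the one-line remark that every Dynkin diagram automorphism $\sigma$ is ``compatible with $*$'' and thereby reduces to the case $\sigma={}^*$.
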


\begin{proof}
Note that any Dynkin diagram automorphism $\sigma$ is compatible with $^*$, it is enough
to show when $\sigma= {}^*$. Let $\redez=(s_{i_1},\cdots, s_{i_N})$.
The only thing we need to show is that  $^*$-compositions of
$[\redez]$, $r_{i_N}[\redez]$ and $[\redez] r_{i_1}$ are same. If
$r_{i_N}[\redez]=[\redez']$  then $(s_{i^*_N},s_{i_1},\cdots,
s_{i_{N-1}})\in[\redez']$. Hence $^*$-compositions of $[\redez]$ and
$[\redez']$ are same. Similarly, $^*$-compositions of
$[\redez]r_{i_1}$ and $[\redez]$ are same. Hence we proved the
proposition.
\end{proof}

\begin{example} \label{Ex:coxeter composition} \hfill
\begin{enumerate}
\item Let $\redez$ be a reduced
expression of $w_0$ of $A_{n}$ adapted to
$$ Q =\xymatrix@R=0.5ex@C=5ex{ *{\circ}<3pt> \ar@{<-}[r]_<{1}  &*{\circ}<3pt>
\ar@{<-}[r]_<{ 2}  &   {} \ar@{.}[r] & *{\circ}<3pt>
\ar@{<-}[r]_>{ \ \ n-1} &*{\circ}<3pt>\ar@{<-}[r]_>{
\quad n} &*{\circ}<3pt> } $$ Let $\sigma=^*$. Then the
$\sigma$-composition of $[\redez]$ consists of $\lceil
\frac{n+1}{2}\rceil$ components such that
\begin{equation}
\left\{
\begin{array} {ll}
(n+1, \cdots, n+1)& \text{ if $n$ is even }\\
(n+1, \cdots, n+1, \frac{n+1}{2}) & \text{ if $n$ is odd}.
\end{array} \right.
\end{equation}
It is well known that all the adapted reduced expressions of $w_0$
are in this r-cluster point and all of equivalent classes in this
r-cluster point are adapted to some Dynkin quiver.
\item In type $A_k$ $(k \le 4)$ (resp. $D_4$), there is a one-to-one correspondence between $r$-cluster points and $^*$-compositions (resp. $\sigma$-composition). ( See  Appendix \ref{Sec:Appen_A4} for $A_4$. )
\item In type $A_5$, there are at least 8 r-cluster points.
\end{enumerate}
\end{example}

\begin{remark}
The number of commutation classes of reduced expressions  $[\redez]$ of type $A_n$ increases exponentially as the $n$ increases \cite[A006245]{OEIS}.
However, in this paper, we claim that commutation classes in the same cluster point are closely related to each other. Hence classifying cluster points can be an interesting problem. (See Conjecture \ref{Conjecture} below.)
\end{remark}

\begin{conjecture} \label{Conjecture}
As a generalization of Example \ref{Ex:coxeter composition} (2), we conjecture that the $\sigma$-Coxeter composition classifies all cluster points, where
$\sigma$ is non-trivial.
\end{conjecture}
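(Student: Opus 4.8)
The plan is to split the claimed classification into its two halves. One half --- that the $\sigma$-composition is constant on an $r$-cluster point --- is already established by the Proposition immediately preceding Example \ref{Ex:coxeter composition}, so it yields a well-defined map from $r$-cluster points to $\sigma$-compositions. The entire content of the conjecture therefore lies in the reverse implication: two commutation classes with equal $\sigma$-composition must lie in a common $r$-cluster point, i.e.\ this map is injective, together with an intrinsic description of its image (the ``$\sigma$-Coxeter compositions''). I would fix the relevant non-trivial automorphism $\sigma$ for the given type ($\sigma = {}^*$ in type $A_n$, the triality in $D_4$, and so on) and aim to produce, inside each $r$-cluster point, a distinguished representative whose commutation class is determined purely by the tuple $(\mathtt{c}_1,\ldots,\mathtt{c}_k)$ of Definition \ref{def: sigma-composition}.

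First I would set up a normal-form procedure driven by the reflection maps. The explicit recipe \eqref{ref redez h} for the right reflection on $\Upsilon_{[\redez]}$ --- delete the sink $\al_i$, reattach it as a source of residue $i^*$, and relabel every remaining root by $s_i$ --- lets one cycle sinks to the tail one at a time, and Proposition \ref{Prop:well-def reflec} guarantees these operations descend to commutation classes. Using the realization of $\prec_{[\redez]}$ by paths in $\Upsilon_{[\redez]}$ (Theorem \ref{Thm:order_path}) and the combinatorial control over same-residue vertices of Proposition \ref{Lem:sameresidue} to track how sinks and sources migrate under each $r_i$, I would select the representative whose residue sequence is lexicographically least for a total order on $I$ refining the fixed orbit order $\mathcal{O}=(o_1,\ldots,o_k)$, and argue that the sorting terminates.

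Next I would show that this terminal normal form depends only on the $\sigma$-composition, for which induction on the rank $n$ seems most promising. Removing an orbit $o_k$ consisting of leaves of $\Delta$ should, after extracting the corresponding subword, relate $\Upsilon_{[\redez]}$ to a combinatorial AR quiver of a Levi-type subdiagram; then the multiplicities $\mathtt{c}_1,\ldots,\mathtt{c}_{k-1}$ would pin down the smaller class by induction, while $\mathtt{c}_k$ dictates how the removed orbit is interleaved. The cases $A_k$ $(k\le 4)$ and $D_4$ of Example \ref{Ex:coxeter composition} (2), presumably settled by direct enumeration, would serve as base cases, and surjectivity onto the set of $\sigma$-Coxeter compositions would follow by exhibiting the normal form attached to each admissible tuple.

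The step I expect to be genuinely hard --- and the reason the statement is still a conjecture --- is establishing that the reflection maps act \emph{transitively} on the set of classes sharing a fixed $\sigma$-composition. The maps $r_i$ are rigid, being applicable only at a current sink or source, so connecting two a priori unrelated classes of equal composition may demand long and non-canonical sequences of reflections, while the number of commutation classes itself grows exponentially in $n$ \cite[A006245]{OEIS}. A subtler point is the intrinsic characterization of the image: one must describe which tuples $(\mathtt{c}_1,\ldots,\mathtt{c}_k)$ actually occur, rather than merely naming them $\sigma$-Coxeter compositions after the fact. As a first test of the normal-form procedure I would run it in type $A_5$, where Example \ref{Ex:coxeter composition} (3) already records at least eight $r$-cluster points, to check that the sorting genuinely separates them.
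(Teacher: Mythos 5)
The statement you are treating is Conjecture \ref{Conjecture}; the paper gives no proof of it, so there is no argument of the authors' to compare yours against. Your proposal correctly separates the claim into two implications and correctly notes that the forward one --- constancy of the $\sigma$-composition on an $r$-cluster point --- is already the (unlabeled) proposition immediately preceding Example \ref{Ex:coxeter composition}. But that means the entire content of the conjecture is the converse, namely that the reflection maps act transitively on the set of commutation classes with a fixed $\sigma$-composition, and your own final paragraph concedes that this is precisely the step you cannot carry out. What you have written is therefore a research plan with an accurate diagnosis of where the difficulty sits, not a proof.

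Concretely, the normal-form strategy has two unfilled holes. First, you give no monovariant certifying that the ``sorting'' by reflection maps terminates, and, more importantly, no argument that two classes sharing a $\sigma$-composition arrive at the \emph{same} terminal form: lexicographic minimality of a compatible reading is a choice made within each $r$-cluster orbit separately, so it cannot by itself detect that two orbits coincide. Second, the inductive step of deleting a leaf orbit $o_k$ is not obviously compatible with the operations you are quotienting by: by \eqref{ref redez h}, each $r_i$ relabels every remaining positive root by $s_i$ and moves a vertex from one end of $\Upsilon_{[\redez]}$ to the other, and you would need to show that the induced commutation class of the subword on $I\setminus o_k$ is itself an invariant of the $r$-cluster point, which is a statement of essentially the same depth as the conjecture one rank down together with a nontrivial compatibility. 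Finally, the intrinsic description of which tuples $(\mathtt{c}_1,\ldots,\mathtt{c}_k)$ occur is never supplied. None of this is a criticism of the plan as a plan --- it is a sensible line of attack --- but the statement remains a conjecture and your proposal does not close it.
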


\section{ Labeling of combinatorial AR quivers} \label{Sec: Labeling}

 In this section, we discuss about finding labels of combinatorial AR quivers.
For $A_n$ type, there are more efficiency way to find the label of each vertex in $\Gamma_Q$ than direct computations.
Similarly, for the labeling in $\Upsilon_{[\redex]}$ of other finite types, there exists analogues way to avoid large amount of computations (see Remark \ref{rmk: note} (1)).
We first discuss combinatorial AR quivers of type $A$ and generalize the argument to classical finite types.  

Let $Q$ be an AR quiver of type $A_n$.
Recall that the subquiver $B(Q)$ of the repetition quiver $\Z Q$
induces a coordinate of  the AR quiver $\Gamma_Q$. We denote by
$(\alpha)_C$ for $\alpha \in \Phi^+$ the coordinate corresponding to
$\alpha.$ On the other hand, we denote $\alpha\in \Phi^+$ by $(a,b)_\Phi$
when
$(\alpha)_C=(a,b).$

\begin{lemma} \label{lemma: position simple}\cite{B99,KKK13b}
We call the vertex $k$ in the Dynkin quiver $Q$  a {\it left
intermediate} if $Q$ has the subquiver {\rm $\xymatrix@R=3ex{ *{
\circ }<3pt> \ar@{->}[r]_<{k-1}  &*{\circ}<3pt>
\ar@{->}[r]_<{k} &*{\circ}\ar@{-}[l]^<{\ \ k+1}} $} and call
the vertex $k$ in the Dynkin quiver $Q$  a {\it right
intermediate} if $Q$ has the subquiver {\rm $\xymatrix@R=3ex{ *{
\circ }<3pt> \ar@{<-}[r]_<{k-1}  &*{\circ}<3pt>
\ar@{<-}[r]_<{k} &*{\circ}\ar@{-}[l]^<{\ \ k+1}} $.} Then we have the following properties.
\begin{enumerate}
\item[{\rm (1)}] For a simple root $\alpha_k$, we have
\begin{equation}
(\alpha_k)_C= \left\{ \begin{array}{ll}
(k,\xi_k), & \text{ if $k$ is a sink in $Q$, } \\
(n+1-k,\xi_k-n+1), & \text{ if $k$ is a source in $Q$, }\\
(1,\xi_k-k+1), & \text{ if $k$ is a right intermediate,}\\
(n,\xi_k-n+k), & \text{ if $k$ is a left intermediate.}
\end{array}
\right.
\end{equation}
\item[{\rm (2)}] If $\beta \to \alpha$ is an arrow in $\Gamma_Q$ for $\alpha, \beta \in \Phi^+$ then $(\beta, \alpha)=1$.
\end{enumerate}
Here $\xi$ is the height function such that $\text{max}\{\, \xi_k\,
|\, k=1, \cdots, n\, \}=0$.
\end{lemma}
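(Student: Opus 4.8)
The plan is to reduce the whole statement to locating each simple root inside the $\phi_Q$-orbit that produces it, exploiting that the coordinate map $(\cdot)_C = \pi$ of $B(Q)$ is completely pinned down by the two rules in \eqref{Eqn:label_B}: it sends $\gamma_i\mapsto(i,\xi_i)$ and satisfies $\pi(\phi_Q\beta)=\pi(\beta)-(0,2)$ whenever $\beta,\phi_Q\beta\in\Phi^+$. Since $\{\gamma_i\}=\Phi(\phi_Q)$ consists exactly of the roots $\beta$ with $\phi_Q^{-1}\beta\in\Phi^-$ (cf. \eqref{eq: cahracterization}), the construction of $B(Q)$ organizes $\Phi^+$ into finite $\phi_Q$-chains, each entering at a unique $\gamma_i$; so every $\beta\in\Phi^+$ is uniquely $\phi_Q^{m}(\gamma_i)$ with $m\ge 0$ and $\gamma_i,\dots,\phi_Q^m(\gamma_i)\in\Phi^+$, whence $(\beta)_C=(i,\xi_i-2m)$. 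Computing $(\alpha_k)_C$ thus means identifying the orbit top $\gamma_i$ (giving the residue) and the offset $m$ (giving the second coordinate). Throughout I write $\alpha_k=\epsilon_k-\epsilon_{k+1}$, use $\gamma_i,\theta_i$ as in \eqref{eq: gam thet}, set $i^\ast=n+1-i$, and realize $\phi_Q$ as the induced $(n{+}1)$-cycle on $\{\epsilon_1,\dots,\epsilon_{n+1}\}$.

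Part~(2) is immediate: type $A_n$ is simply laced, so $-(\alpha_i,\alpha_j)=1$ when $d_\Delta(i,j)=1$, and Proposition~\ref{the prop} gives $(\beta,\alpha)=-(\alpha_i,\alpha_j)=1$ for every arrow $\beta\to\alpha$. In part~(1) the sink case is equally direct: if $k$ is a sink then no arrow leaves $k$, so $\overset{\gets}{k}=\{k\}$, hence $\gamma_k=\alpha_k$ and $(\alpha_k)_C=(k,\xi_k)$ straight from \eqref{Eqn:label_B}.

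For the source case, $k$ a source forces $\overset{\to}{k}=\{k\}$, so $\alpha_k=\theta_k\in\Phi(\phi_Q^{-1})$ sits at the bottom of its column. Its residue needs no computation: since $k$ is a source there is $\redez=(s_{i_1},\dots,s_{i_N})\in[Q]$ ending with $s_{i_N}$, $i_N=k^\ast$, and the last labelled root is $\beta^{\redez}_N=\redex_{<N}(\alpha_{i_N})=\alpha_{i_N^\ast}=\alpha_k$ by the Lemma on the last letter of a reduced expression in Section~\ref{sec: reflection map}; its residue is $i_N=k^\ast=n+1-k$, as asserted. For the two intermediate cases the local orientation pins down which wall of the strip $1\le\mathrm{res}\le n$ carries the orbit top: a right intermediate $k{+}1\to k\to k{-}1$ drives the backward orbit toward the left boundary, forcing the top to be the left-anchored $\gamma_1=\epsilon_1-\epsilon_{r_1+1}$ (residue $1$), while a left intermediate $k{-}1\to k\to k{+}1$ forces the right-anchored $\gamma_n=\epsilon_{l_n}-\epsilon_{n+1}$ (residue $n$); one then counts the offset $m$ along the orbit to reach $\alpha_k$.

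The heart of the argument, and the step I expect to be the main obstacle, is pinning the offset $m$ (hence the second coordinate) in the non-sink cases: a priori $m$ depends on the \emph{entire} orientation, through the column lengths and the global shape of the cycle, whereas the asserted answers $\xi_k-n+1$, $\xi_k-k+1$, $\xi_k-n+k$ depend only on $k,n,\xi_k$. The mechanism collapsing this is the defining relation $\xi_{j\pm1}=\xi_j\pm1$ across each edge, which forces $\xi_i-\xi_k\equiv i-k \pmod 2$ and makes the step count telescope to these closed forms. I would establish it by one of three routes: (i) write $\phi_Q$ explicitly as the $(n{+}1)$-cycle determined by the left/right pattern of $Q$ and trace the images of $\{k,k+1\}$ until positivity first fails on each side; (ii) more cleanly, guess the full coordinate $\Pi(\epsilon_a-\epsilon_b)$ of every positive root, verify that $\Pi$ agrees with $\pi$ on the $\gamma_i$ and is $\phi_Q$-equivariant, conclude $\Pi=\pi$ by the uniqueness inherent in \eqref{Eqn:label_B}, and specialize to $b=a+1$; or (iii) induct on $N=\ell(w_0)$ using the reflection map $r_i$ at a sink and tracking how \eqref{ref redez h} shifts the coordinates of the surviving simple roots. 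Route~(ii) avoids any case analysis of the cycle and is the version I would write up.
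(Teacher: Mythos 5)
The paper offers no proof of this lemma at all---it is quoted with a citation to \cite{B99,KKK13b}---so there is no in-paper argument to compare yours against; you must supply a complete proof. Your framework (organizing $\Phi^+$ into $\phi_Q$-chains entering at the $\gamma_i$, so that $(\beta)_C=(i,\xi_i-2m)$ for $\beta=\phi_Q^m(\gamma_i)$) is the right one, and three of the pieces are genuinely done: part (2) via Proposition \ref{the prop} and simply-lacedness, the sink case via $\gamma_k=\alpha_k$, and the \emph{residue} of the source case via the identity $\redex_{<N}(\alpha_{i_N})=\alpha_{i_N^*}$.

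The genuine gap is that the quantitative heart of part (1) is announced but never carried out. For a source you only say $\alpha_k=\theta_k$ ``sits at the bottom of its column''; that identifies it as the leftmost vertex of its row but does not produce the second coordinate $\xi_k-n+1$, which requires computing the length of that $\phi_Q$-orbit (equivalently $\mathsf{r}_i$ in \eqref{Eqn:vertex_numbers}) and relating it to $\xi$. For the two intermediate cases neither coordinate is established: the claim that a right intermediate's backward orbit ``is driven toward the left boundary, forcing the top to be $\gamma_1$'' is exactly the assertion to be proved, not an argument, and the offset $m$ is again deferred. You name three routes and execute none; route (ii) in particular consists of guessing and verifying a closed formula for $\Pi(\epsilon_a-\epsilon_b)$ for \emph{all} positive roots, which is essentially the whole content of the lemma and cannot be left as a remark. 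One further warning before you write up route (ii): if you use Definition \ref{Def:height} literally ($\xi(j)=\xi(i)-1$ along an arrow $i\to j$), the chains you compute will \emph{not} reproduce the stated formulas --- e.g.\ for $Q\colon 1\leftarrow 2\to 3\leftarrow 4\leftarrow 5$ of Example \ref{Ex:adapted AR quiver Aprime} the orbit of $\gamma_1=[1]$ is $[1],[2,3],[4],[5]$, and matching $(\alpha_5)_C=(1,\xi_5-4)$ and $(\alpha_4)_C=(1,\xi_4-3)$ against the displayed coordinates forces the opposite convention $\xi(j)=\xi(i)+1$ along $i\to j$ (normalized to $\max\xi=0$). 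Any honest verification step in your route (ii) will collide with this, so the convention must be fixed explicitly at the outset.
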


After all, the following theorem shows how to find vertices in
$\Gamma_Q$ associated to a (non-simple) positive root in an
efficient way.  In order to introduce such methods, we distinguish types of sectional paths in AR quivers.

\begin{definition} (cf. \cite[Definition 3.3]{Oh15E})
In an AR quiver $\Gamma_Q$, a sectional path is called {\it N-sectional} if the path is upwards. On the other hand, if a sectional path is downwards, it is said to be an {\it S-sectional} path.
\end{definition}

\begin{theorem} \label{Thm:adaptedAR_comb} \cite{Oh14A}
For a positive root $\alpha=\sum_{j=k_1}^{k_2} \alpha_j$ of type $A_n$, let us call $\alpha_{k_1}$ by the {\it left end} and $\alpha_{k_2}$ by the {\it right end} of $\alpha$.
\begin{enumerate}
\item[{\rm (a)}] Every vertex in an N-sectional path in $\Gamma_Q$ shares its left end.
\item[{\rm (b)}] Every vertex in an S-sectional path in $\Gamma_Q$ shares its right end.
\end{enumerate}
\end{theorem}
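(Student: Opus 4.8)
The plan is to transport the statement to $\Gamma_Q\cong\Upsilon_{[Q]}$ (Theorem~\ref{thm: Upsilon Q Gamma Q}) and to realize positive roots of type $A_n$ by $\alpha_k=\epsilon_k-\epsilon_{k+1}$, so that $[a,b]=\epsilon_a-\epsilon_{b+1}$; the left end $a$ and right end $b$ of an interval are then read off from its two nonzero $\epsilon$-coordinates. First I would record the inner product of two intervals,
\[ (\,[a,b]\,,\,[c,d]\,)=\delta_{a,c}-\delta_{a,d+1}-\delta_{b+1,c}+\delta_{b,d}, \]
and observe that this equals $1$ exactly when precisely one of $a=c$ (equal left ends) and $b=d$ (equal right ends) holds: if both fail the value is $\le 0$, and if both hold the two intervals coincide. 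Since every arrow of $\Upsilon_{[Q]}$ carries color $1$ in the simply laced type $A_n$, Proposition~\ref{Prop:meaning of number in arrow} gives $(\gamma,\gamma')=1$ for \emph{any} two roots $\gamma,\gamma'$ lying on a common sectional path; hence any two such roots share exactly one of their ends.

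Next I would promote this pairwise fact to a single shared end along the whole path. Suppose a sectional path contained three consecutive roots $\gamma,\gamma',\gamma''$ with $\gamma,\gamma'$ sharing only their left end and $\gamma',\gamma''$ sharing only their right end. Then $\gamma$ and $\gamma''$ have different left ends (since $\gamma'$ forces $L(\gamma)=L(\gamma')\ne L(\gamma'')$) and different right ends, hence share no end; but $\gamma,\gamma''$ also lie on the sectional path, so $(\gamma,\gamma'')=1$ by the previous paragraph, a contradiction. The symmetric argument rules out a switch from right-sharing to left-sharing. Therefore every connected sectional path is uniformly of one kind: all its vertices share a common left end, or all share a common right end.

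It remains to match the uniform kind with the geometric direction, i.e.\ to prove that N-sectional (upward) paths are the left-sharing ones and S-sectional (downward) paths the right-sharing ones. For this I would use the coordinates of $\Gamma_Q$ inside the repetition quiver of Section~\ref{Sec:Aus}: writing the root at position $(i,p)$ with residue $i$, an upward arrow changes $(i,p)$ to $(i-1,p+1)$ and a downward arrow to $(i+1,p+1)$, so that $p+i$ is constant along every N-sectional path and $p-i$ along every S-sectional path. The key input is then the coordinate lemma that the left end of the root at $(i,p)$ depends only on $p+i$ and the right end only on $p-i$; granting it, a left-sharing path must vary $p+i$ only if its left ends vary, so it cannot be S-sectional, and dually. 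Lemma~\ref{lemma: position simple}, which locates the simple roots, pins down the actual end values along each $p\pm i$-level, yielding exactly (a) and (b).

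The crux, and the step I expect to be the main obstacle, is this last matching: it is where the orientation of $Q$ and the height function $\xi$ enter and where one must exclude a global swap of the two ends. The cleanest way I see to make it rigorous, uniformly in $Q$, is to knit $\Gamma_Q$ column by column using the additive (mesh) relation
\[ \mu_{(i,p)}+\mu_{(i,p+2)}=\mu_{(i-1,p+1)}+\mu_{(i+1,p+1)}, \]
where $\mu_{(i,p)}$ is the root at $(i,p)$. Taking as inductive hypothesis that, in all earlier columns, the left end is constant along N-diagonals and the right end along S-diagonals, the translate $\mu_{(i,p)}$ shares its left end with $\mu_{(i-1,p+1)}$ and its right end with $\mu_{(i+1,p+1)}$; substituting into the mesh and cancelling in the $\epsilon$-basis gives $\mu_{(i,p+2)}=\epsilon_a-\epsilon_{c'}$, where $\epsilon_a$ is the positive coordinate of $\mu_{(i+1,p+1)}$ and $\epsilon_{c'}$ the negative coordinate of $\mu_{(i-1,p+1)}$. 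Thus $\mu_{(i,p+2)}$ inherits its left end along its N-diagonal and its right end along its S-diagonal, completing the induction with the simple roots of Lemma~\ref{lemma: position simple} as base cases.
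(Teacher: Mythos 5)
First, note that the paper does not actually prove this theorem: it is quoted from \cite{Oh14A}. What the paper does prove is the generalization, Theorem \ref{Thm: labeling of type A}, and it does so by a route quite different from yours: Proposition \ref{Prop:AR_label} computes directly from the reflection action \eqref{Eqn:s_i action} and Lemma \ref{Lem:AR_comb_prop1} that an arrow going \emph{up} one residue preserves the left end and an arrow going \emph{down} one residue preserves the right end, which immediately chains along a sectional path. Your first two paragraphs are correct and are a genuinely different argument: the identity $([a,b],[c,d])=\delta_{a,c}-\delta_{a,d+1}-\delta_{b+1,c}+\delta_{b,d}$, the observation that this equals $1$ iff exactly one end is shared, Proposition \ref{Prop:meaning of number in arrow} forcing $(\gamma,\gamma')=1$ for any two roots on a common sectional path, and the three-term argument promoting this to a single end shared along the whole path are all sound. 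This buys you, cleanly and orientation-independently, that every sectional path uniformly shares a left end or uniformly shares a right end.

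The gap is in the step you yourself flag as the crux: deciding which end goes with which direction. As written, the argument rests on two unestablished inputs. (i) The mesh relation $\mu_{(i,p)}+\mu_{(i,p+2)}=\mu_{(i-1,p+1)}+\mu_{(i+1,p+1)}$ is a genuine theorem of AR theory (additivity of dimension vectors over almost split sequences); it is nowhere available in the paper, it fails at the boundary rows $i=1,n$ where a middle term is absent, and it takes a different form at the projective/injective slices, so "knitting column by column" needs all of these cases handled. (ii) The base case is misidentified: the simple roots located by Lemma \ref{lemma: position simple} are scattered throughout $\Gamma_Q$ and do not form an initial slice; the knitting must start from the projective slice, whose labels are the roots $\gamma_i$ of \eqref{eq: gam thet} (intervals determined by the orientation of $Q$), and it is exactly there that the left/right matching with N/S is decided. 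The economical way to close the gap is to replace the whole third and fourth paragraphs by the one-arrow computation of Proposition \ref{Prop:AR_label}: writing $\beta^{\redex}_{k_1}=\redex_{\le k_2-1}s_{i_{k_2}}\cdots s_{i_{k_1-1}}(\alpha_{i_{k_1}})$ and $\beta^{\redex}_{k_2}=\redex_{\le k_2-1}(\alpha_{i_{k_2}})$ and using \eqref{Eqn:s_i action} shows an upward arrow preserves the left end and a downward arrow the right end. Be aware, though, that once you have that computation your inner-product argument becomes redundant, since the single-arrow statement already propagates along the path by transitivity.
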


Now we know how to draw the AR quiver $\Gamma_Q$ associated to the
Dynkin quiver $Q$ of $A_n$ purely combinatorially. We summarize the
procedure with the example below.

 \begin{example} \label{Ex:adapted AR quiver A}
For $ Q =\xymatrix@C=5ex{ *{ \circ }<3pt> \ar@{->}[r]_<{1}
&*{\circ}<3pt> \ar@{->}[r]_<{2} &*{\circ}<3pt>
\ar@{<-}[r]_<{3} &*{\circ}<3pt> \ar@{->}[r]_<{4} &
*{\circ}<3pt>\ar@{<-}[r]_<{ \ 5} & *{\circ}<3pt>
\ar@{-}[l]^<{\ \ \ \ 6} }$ of type $A_6$, Lemma \ref{lemma: position
simple} tells that $\Gamma_Q$ can be drawn with partial labels:
$$\scalebox{0.84}{\xymatrix@C=2ex@R=1ex{
1& &\bullet \ar@{->}[dr] & & \bullet \ar@{->}[dr] & &[2] \ar@{->}[dr] & &[1]  \\
2&[5]\ar@{->}[ur]\ar@{->}[dr]&&\bullet\ar@{->}[ur]\ar@{->}[dr]&&\bullet \ar@{->}[ur]\ar@{->}[dr]&& \bullet \ar@{->}[ur]&\\
3&& \bullet \ar@{->}[ur]\ar@{->}[dr] &&\bullet\ar@{->}[ur]\ar@{->}[dr] && \bullet\ar@{->}[ur]\ar@{->}[dr]&&\\
4&[3]\ar@{->}[ur]\ar@{->}[dr]&&\bullet\ar@{->}[ur]\ar@{->}[dr]& &\bullet \ar@{->}[ur]\ar@{->}[dr]&&[4]&\\
5&& \bullet \ar@{->}[ur]\ar@{->}[dr] && \bullet\ar@{->}[ur]\ar@{->}[dr] && \bullet \ar@{->}[ur]\ar@{->}[dr]&&\\
6&&& \bullet \ar@{->}[ur] && \bullet\ar@{->}[ur] &&[6]&.\\
}}
$$
Finally, using Theorem \ref{Thm:adaptedAR_comb}, we can {\it
complete} whole labels of $\Gamma_Q$
$$
\scalebox{0.84}{\xymatrix@C=2ex@R=1ex{
1& &[5,6] \ar@{->}[dr] & & [3,4] \ar@{->}[dr] & &[2] \ar@{->}[dr] & &[1]  \\
2&[5]\ar@{->}[ur]\ar@{->}[dr]&&[3,6]\ar@{->}[ur]\ar@{->}[dr]&&[2,4] \ar@{->}[ur]\ar@{->}[dr]&&[1,2] \ar@{->}[ur]&\\
3&& [3,5] \ar@{->}[ur]\ar@{->}[dr] &&[2,6]\ar@{->}[ur]\ar@{->}[dr] &&[1,4]\ar@{->}[ur]\ar@{->}[dr]&&\\
4&[3]\ar@{->}[ur]\ar@{->}[dr]&&[2,5]\ar@{->}[ur]\ar@{->}[dr]& &[1,6] \ar@{->}[ur]\ar@{->}[dr]&&[4]&\\
5&& [2,3]\ar@{->}[ur]\ar@{->}[dr] && [1,5]\ar@{->}[ur]\ar@{->}[dr] && [4,6] \ar@{->}[ur]\ar@{->}[dr]&&\\
6&&& [1,3] \ar@{->}[ur] && [4,5]\ar@{->}[ur] &&[6]&.\\
}}
$$
\end{example}

Now, we generalize the above arguments in $\Gamma_Q$. In order to find analogous results in $\Upsilon_{[\redex]}$ of every finite type, we introduce the notion of {\it component}:

\begin{definition} \label{Def:comp}
 Let  $\alpha= \sum_{i\in J} c_i \epsilon_i$ and $\beta= \sum_{i\in J} d_i \epsilon_i$. (Note that $J$ need not to be the same as $I$.)
\begin{enumerate}
\item If $i\in I$ satisfies $c_i \neq 0$ then $\epsilon_i$ is called a component of $\alpha$.
\item  If $i\in I$ satisfies $c_i>0$ (resp. $c_i<0$) then $\epsilon_i$ is called a {\it positive component } (resp. {\it negative component} of $\alpha$.
\item
We say $\alpha$ and $ \beta$ share a component if there is $i\in I$ such that $\epsilon_i$ is a positive component to both $\alpha$ and $\beta$ or a negative component to both $\alpha$ and $\beta$.
\end{enumerate}
\end{definition}

\begin{remark}
In $A_n$ type, we have $[i,j]=\epsilon_i-\epsilon_{j+1}.$ Hence Theorem \ref{Thm:adaptedAR_comb} can be restated as follows:
An $N$-sectional (resp. $S$-sectional) path in $\Gamma_Q$ shares a positive (resp. negative) component. In short, each sectional path in $\Gamma_Q$ shares a component.
\end{remark}

For type $A_n$, recall that the action $s_i$  on $\Phi^+$ can be described as follows:
\begin{align} \label{Eqn:s_i action}
[j,k] \mapsto\begin{cases}
[j,k-1] & \text{ if }j<k=i,\\
[j+1,k]   & \text{ if } j=i<k,\\
[j,k+1]    & \text{ if } j<k=i-1,\\
[j-1,k]   & \text{ if } j=i+1<k,\\
-[i] & \text{ if } i=j=k,\\
[j,k] & \text{ otherwise.}
\end{cases}
\end{align}


Then the following lemma is an easy consequence induced from the
action of simple reflection on $\Phi^+$:

\begin{lemma} \label{Lem:AR_comb_prop1}
Let $s_t$ be a simple reflection on $\W$ of type $A_n$ and $[i,j]
\seteq \sum_{k=i}^j \alpha_k$ for $i,j\in I$.
\begin{enumerate}
\item[{\rm (1)}]  If $s_t[i,k]$, $s_t [j,k]\in \Phi^+$  then $s_{t}[i,k]=[i',k']$ and $s_{t}[j,k]=[j',k']$ for some $i',j'\leq k' \in \{1,2, \cdots, n\}$.
\item[{\rm (2)}] If $s_t[i,j]$, $s_t[i,k]\in \Phi^+$  then $s_{t}[i,j]=[i',j']$ and $s_{t}=[i',k']$ for some $i'\leq j', k' \in \{1,2, \cdots, n\}$.
\end{enumerate}
\end{lemma}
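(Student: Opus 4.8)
The plan is to work entirely in the standard coordinate realization of the $A_n$ root system recalled just before the statement, in which every positive root is written as $[a,b]=\epsilon_a-\epsilon_{b+1}$ with $a\le b$, and in which the simple reflection $s_t$ acts on the ambient space linearly as the transposition exchanging $\epsilon_t$ and $\epsilon_{t+1}$ while fixing every other $\epsilon_r$ (this follows from $\alpha_t=[t,t]=\epsilon_t-\epsilon_{t+1}$). The key observation that drives everything is that the image $s_t(\epsilon_r)$ of a single basis vector depends only on $r$ and $t$, never on the root in which $\epsilon_r$ happens to appear. So the whole proof will be a one-line bookkeeping of which basis vector survives with which sign, and I expect no case analysis on the position of $t$ relative to $i,j,k$ to be needed; I would keep \eqref{Eqn:s_i action} only as a sanity check rather than as the engine of the argument.

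For part (1), I would write $[i,k]=\epsilon_i-\epsilon_{k+1}$ and $[j,k]=\epsilon_j-\epsilon_{k+1}$, so that by linearity $s_t[i,k]=s_t(\epsilon_i)-s_t(\epsilon_{k+1})$ and $s_t[j,k]=s_t(\epsilon_j)-s_t(\epsilon_{k+1})$. Both images share the common term carrying coefficient $-1$, namely $s_t(\epsilon_{k+1})$. Since $i\le k<k+1$ we have $s_t(\epsilon_i)\neq s_t(\epsilon_{k+1})$, so each image is a genuine root; by the hypothesis $s_t[i,k]\in\PR$ it is a \emph{positive} root $\epsilon_{i'}-\epsilon_{m}$ with $i'<m$, and the vector carrying coefficient $-1$ must be $s_t(\epsilon_{k+1})$, i.e. $m$ equals the index of $s_t(\epsilon_{k+1})$. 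Hence $s_t[i,k]=[i',m-1]$ whose right end $m-1$ is determined by $k$ and $t$ alone. The identical reasoning applied to $[j,k]$ yields the same value $m$, so $s_t[j,k]=[j',m-1]$, and the two images share the right end $k'=m-1$, as claimed.

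Part (2) is the mirror image. Writing $[i,j]=\epsilon_i-\epsilon_{j+1}$ and $[i,k]=\epsilon_i-\epsilon_{k+1}$, the two images $s_t[i,j]$ and $s_t[i,k]$ now carry the \emph{common} term $s_t(\epsilon_i)$ with coefficient $+1$. In a positive root $\epsilon_a-\epsilon_b$ the vector with coefficient $+1$ is the smaller-indexed one, and here that vector is $s_t(\epsilon_i)$ in both cases; therefore both images have the same left end $i'=$ (index of $s_t(\epsilon_i)$), which is exactly the assertion of (2).

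The single point that needs care — and which I would flag as the subtle step rather than a real obstacle — is the use of the positivity hypotheses $s_t[i,k],s_t[j,k]\in\PR$ (resp. $s_t[i,j],s_t[i,k]\in\PR$). These are precisely what excludes the degenerate output $-[t]$ (the case $i=k=t$ of \eqref{Eqn:s_i action}) and, more importantly, what pins the coefficient $-1$ onto $s_t(\epsilon_{k+1})$ and the coefficient $+1$ onto $s_t(\epsilon_i)$ with the correct ordering $a<b$; without positivity one could not read off a well-defined ``right end'' or ``left end'' from the image. Once this is recorded, both parts follow immediately from the fact that $s_t$ transforms the shared endpoint-coordinate $\epsilon_{k+1}$ (in (1)) or $\epsilon_i$ (in (2)) in a way independent of the other endpoint.
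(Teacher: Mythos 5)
Your proof is correct. The paper itself offers no written argument for this lemma: it simply declares it "an easy consequence induced from the action of simple reflection on $\Phi^+$", pointing at the case-by-case formula \eqref{Eqn:s_i action}, so the intended route is a finite case check of how $s_t$ moves the left and right ends of $[i,k]$, $[j,k]$ (resp.\ $[i,j]$, $[i,k]$) in each of the six cases of that formula. You instead work in the orthonormal realization $[a,b]=\epsilon_a-\epsilon_{b+1}$ and use that $s_t$ is the transposition of $\epsilon_t$ and $\epsilon_{t+1}$, so that the shared coordinate $\epsilon_{k+1}$ (resp.\ $\epsilon_i$) is sent to a basis vector depending only on $t$, and positivity forces it to remain the coefficient-$(-1)$ (resp.\ $+1$) slot in both images. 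This is a genuinely different and cleaner presentation: it eliminates the case analysis entirely, and it is in substance the "component" point of view (Definition \ref{Def:comp} and the Remark restating Theorem \ref{Thm:adaptedAR_comb} via $[i,j]=\epsilon_i-\epsilon_{j+1}$) that the paper only introduces afterwards in order to generalize to the other classical types in Theorem \ref{Rem:label_Dtype}; your argument makes that generalization transparent from the start. Your flag on the role of the positivity hypotheses is exactly the right one to record: it is what rules out the degenerate output $-[t]$ and what lets you read off a well-defined left or right end from the image.
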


\begin{proposition} \label{Prop:AR_label}
Let $\redex=(s_{i_1},s_{i_2},\cdots, s_{i_N})$ be a reduced
expression of $w\in \W$ of type $A_n$ and $\Gamma_{[\redex]}$ be the
combinatorial AR quiver.
\begin{enumerate}
\item[{\rm (a)}]  If there is an arrow from $\beta^\redex_{k_1}$ in the $l$-th residue to $\beta^\redex_{k_2}$ in the $(l-1)$-th residue,
then the corresponding positive roots $[i_1,j_1]$ and $[i_2,j_2]$ to
$\beta^\redex_{k_1}$ and $\beta^\redex_{k_2}$ satisfy $i_1=i_2$.
\item[{\rm (b)}]  If there is an arrow from $\beta^\redex_{k_1}$ in the $l$-th residue to $\beta^\redex_{k_2}$ in the $(l+1)$-th residue,
then the corresponding positive roots $[i_1,j_1]$ and $[i_2,j_2]$ to
$\beta^\redex_{k_1}$ and $\beta^\redex_{k_2}$ satisfy $j_1=j_2$.
\end{enumerate}
\end{proposition}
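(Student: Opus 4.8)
The plan is to reduce to the situation in which the two roots joined by the arrow occupy adjacent positions in a single reduced expression, and then to compute directly in the $\epsilon$-coordinates, exploiting that the Weyl group of type $A_n$ acts on $\{\epsilon_1,\dots,\epsilon_{n+1}\}$ by permutations. Throughout I use that the residue is an invariant of the commutation class (this is what makes $\Upsilon_{[\redex]}$ well defined, cf. Proposition \ref{Prop:AR_class}), so a given positive root carries the same residue in every $\redex' \in [\redex]$.

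First I would fix a representative $\redex=(s_{i_1},\dots,s_{i_N})\in[\redex]$ exhibiting the given arrow. Since the arrow has $d_{[\redex]}(\cdot,\cdot)=1$, Proposition \ref{the prop} (more precisely, the reduced expression produced in its proof) yields $\redex'=(s_{i'_1},\dots,s_{i'_N})\in[\redex]$ in which the target of the arrow sits at some position $k'$ and the source at position $k'+1$. Writing $w=s_{i'_1}\cdots s_{i'_{k'-1}}$, the target equals $w(\alpha_{i'_{k'}})$ and the source equals $w\,s_{i'_{k'}}(\alpha_{i'_{k'+1}})$. Realizing $w$ as the permutation $\pi$ of $\{1,\dots,n+1\}$ with $w(\epsilon_a)=\epsilon_{\pi(a)}$ and recalling $[i,j]=\epsilon_i-\epsilon_{j+1}$, every label becomes a difference of two basis vectors, and the assertions (a), (b) amount to identifying which basis vector is shared.

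For part (a) the target has residue $l-1$ and the source has residue $l$, so $i'_{k'}=l-1$ and $i'_{k'+1}=l$. Using $s_{l-1}(\alpha_l)=\alpha_{l-1}+\alpha_l$ in type $A_n$, the source is
\[ [i_1,j_1] = w\,s_{l-1}(\alpha_l) = w(\alpha_{l-1})+w(\alpha_l) = [i_2,j_2] + w(\alpha_l). \]
Hence $[i_2,j_2]=\epsilon_{\pi(l-1)}-\epsilon_{\pi(l)}$ and $[i_1,j_1]=\epsilon_{\pi(l-1)}-\epsilon_{\pi(l+1)}$; the common positive component $\epsilon_{\pi(l-1)}$ forces $i_1=i_2=\pi(l-1)$. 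Part (b) is entirely parallel: there the target has residue $l+1$ and the source residue $l$, and the identity $s_{l+1}(\alpha_l)=\alpha_l+\alpha_{l+1}$ gives $[i_1,j_1]=\epsilon_{\pi(l)}-\epsilon_{\pi(l+2)}$ and $[i_2,j_2]=\epsilon_{\pi(l+1)}-\epsilon_{\pi(l+2)}$, whose common negative component $\epsilon_{\pi(l+2)}=\epsilon_{j_1+1}=\epsilon_{j_2+1}$ yields $j_1=j_2$.

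The only delicate point is the reduction step: I must be sure that after the commutation moves of Proposition \ref{the prop} the two roots genuinely become consecutive and retain their residues, so that the single-reflection computation applies with the correct choice of which root is source and which is target in each case. Everything downstream is the elementary identity $s_{l\mp1}(\alpha_l)=\alpha_l+\alpha_{l\mp1}$ together with bookkeeping of $\epsilon$-coordinates; as an alternative to the permutation language one can read off the shared end directly from Lemma \ref{Lem:AR_comb_prop1} applied to the single reflection $s_{i'_{k'}}$. I expect no genuine obstacle beyond carefully matching the source/target roles in the two cases, and the statement is exactly the length-one base case of the sectional-path sharing phenomenon recorded in Theorem \ref{Thm:adaptedAR_comb}.
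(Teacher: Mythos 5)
Your proof is correct, but it takes a genuinely different route from the paper's. The paper never reduces to adjacency: it works inside the given word, observing that condition (iii) of Algorithm \ref{Alg_AbsAR} forces all intermediate indices $i_{k_2+1},\dots,i_{k_1-1}$ to differ from $l$ and $l-1$, so that the interval-action formula \eqref{Eqn:s_i action} gives $s_{i_{k_2}}\cdots s_{i_{k_1-1}}(\alpha_{l})=[l-1,j]$ for some $j\ge l$; the shared left end of this interval and of $\alpha_{l-1}=[l-1,l-1]$ is then pushed through the common prefix $s_{i_1}\cdots s_{i_{k_2-1}}$ by Lemma \ref{Lem:AR_comb_prop1}. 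You instead invoke the rearrangement constructed in the proof of Proposition \ref{the prop} to place the two roots in consecutive positions of some $\redex'\in[\redex]$, after which a single-reflection computation in $\epsilon$-coordinates exhibits the shared component explicitly as $w(\epsilon_{l-1})$ in case (a) and $w(\epsilon_{l+2})$ in case (b). Both arguments are sound. Your reduction buys a one-line endgame and makes transparent \emph{which} component is shared (using only that both labels lie in $\Phi^+$, so the sign of the shared $\epsilon$ determines whether it is the left or right end), at the cost of re-deploying the nontrivial commutation argument; the paper's version stays in the original reduced expression but must track interval endpoints through the whole prefix via Lemma \ref{Lem:AR_comb_prop1}. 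The one point you flagged as delicate --- that the two roots keep their residues and their source/target roles after the commutation moves --- is indeed unproblematic: commutation moves permute positions without altering either the root or the index attached to it, which is precisely the content of Proposition \ref{Prop:AR_class}, and the relative order of the two roots is preserved since they are comparable under $\prec_{[\redex]}$.
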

\begin{proof}
{\rm (a)} The arrow from $\beta^\redex_{k_1}$ in the $l$-th residue
to $\beta^\redex_{k_2}$ on the $(l-1)$-th residue implies that
 $k_1 > k_2$ and
\begin{eqnarray} &&
  \parbox{95ex}{
the vertices $\{\beta^\redex_k \, | \, k=k_2+1, \cdots, k_1-1\}$ in
$\Upsilon_{[\redex]}$ are not on the $l$-th or $(l-1)$-th residue.
}\label{eq: observation}
\end{eqnarray}

Denote $\redex_{\le k_2-1}=s_{i_1} s_{i_2} \cdots s_{k_2-1}$. Then
 $[i_1, j_1]=  \redex_{\le k_2-1} s_{i_{k_2}} s_{i_{k_2}+1} \cdots s_{i_{k_1-1}}(\alpha_{i_{k_1}}=[l])$ and
 $[i_2, j_2]=  \redex_{\le k_2-1} (\alpha _{i_{k_2}}=[l-1])$. Using \eqref{Eqn:s_i action} and \eqref{eq: observation}, we have
$$s_{i_{k_2}} s_{i_{k_2}+1} \cdots s_{i_{k_1-1}}(\alpha_{i_{k_1}})=[l-1,j]$$ for some $j\geq l$.
Then the first assertion follows from Lemma \ref{Lem:AR_comb_prop1}. \\
{\rm (b)}  The same argument as that in the proof of {\rm (a)}
works.
\end{proof}

\begin{theorem} \label{Thm: labeling of type A} For any $\Upsilon_{[\redex]}$ of type $A$,
if two roots $\alpha$ and $\beta$ are in an N-sectional $($resp. S-sectional$)$ path then $\alpha$ and $\beta$ share  their positive $($resp. negative$)$ components.
\end{theorem}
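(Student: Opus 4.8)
The plan is to reduce the statement about an arbitrary sectional path to the single-arrow situation already settled in Proposition~\ref{Prop:AR_label}, and then to propagate the conclusion along the whole path by a trivial induction. The dictionary I will use is the identity $[i,j]=\epsilon_i-\epsilon_{j+1}$ recorded in the remark following Definition~\ref{Def:comp}: for a positive root of type $A_n$ the unique positive component is $\epsilon_i$, determined by the left end $i$, and the unique negative component is $\epsilon_{j+1}$, determined by the right end $j$. Hence two roots $[i,j]$ and $[i',j']$ share a positive (resp. negative) component precisely when $i=i'$ (resp. $j=j'$). Under this translation, the theorem becomes the assertion that every vertex on an N-sectional (resp. S-sectional) path carries the same left (resp. right) end.

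First I would record a structural fact about $\Upsilon_{[\redex]}$ in type $A_n$: every arrow joins two residues at Dynkin distance $1$, since an arrow requires $d_{\Delta}(i_k,i_j)=1$, so each arrow raises or lowers the residue by exactly one step. I would then observe that the sectionality condition $d_{[\redex]}(\alpha,\beta)=d_{\Delta}(i,j)$ forbids any zigzag: a path of length $m$ can change the residue by at most $m$, with equality only when every step moves in the same vertical direction. Consequently an N-sectional (upward) path consists entirely of arrows going from residue $l$ to residue $l-1$, and an S-sectional (downward) path consists entirely of arrows going from residue $l$ to residue $l+1$.

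With this in hand the argument is a direct induction on the length of the path. For an N-sectional path $\gamma_0\to\gamma_1\to\cdots\to\gamma_m$, each arrow $\gamma_t\to\gamma_{t+1}$ runs from some residue $l$ down to residue $l-1$, so Proposition~\ref{Prop:AR_label}(a) gives that $\gamma_t$ and $\gamma_{t+1}$ have equal left ends; by transitivity all of the $\gamma_t$ share one left end, equivalently one positive component $\epsilon_i$. The S-sectional case is identical word for word, this time invoking Proposition~\ref{Prop:AR_label}(b) to keep the right end, hence the negative component $\epsilon_{j+1}$, constant along the path.

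I do not expect a serious obstacle: the substance is entirely carried by Proposition~\ref{Prop:AR_label}, which localizes everything to a single arrow. The only point needing a little care is the reduction step, namely confirming that an N- or S-sectional path never alternates between upward and downward arrows, so that part (a) (resp. part (b)) of Proposition~\ref{Prop:AR_label} applies uniformly at every step. This is exactly where the defining equality $d_{[\redex]}(\alpha,\beta)=d_{\Delta}(i,j)$ of a sectional path enters, through the monotonicity observation above.
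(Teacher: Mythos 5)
Your proposal is correct and is essentially the paper's own (implicit) argument: the paper states Theorem \ref{Thm: labeling of type A} immediately after Proposition \ref{Prop:AR_label} with the understanding that one translates ``shared positive/negative component'' into ``equal left/right end'' via $[i,j]=\epsilon_i-\epsilon_{j+1}$ and then applies part (a) or (b) of that proposition arrow by arrow along the path. Your extra check that an N-/S-sectional path cannot zigzag is a reasonable precaution (and is most cleanly justified by applying the sectionality condition $d_{[\redex]}(\alpha,\beta)=d_{\Delta}(i,j)$ to pairs at distance two along the path, rather than only to the endpoints), but it is already built into the paper's convention that N-sectional means upwards and S-sectional means downwards.
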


\begin{example}
Let $\redez=(s_1, s_2, s_1, s_3, s_5, s_4, s_3, s_2, s_3, s_5, s_4,
s_1, s_3, s_2, s_3)$ of $A_5$. We can easily find that labels of
sinks and sources of the quiver $\Upsilon_{[\redez]}$ are $[1]$,
$[5]$ and $[3]$. In addition, one can compute that
$\beta^\redez_3=\alpha_2$ and
$\beta^\redez_{13}=-w_0(s_3s_2(\alpha_3))=\alpha_4$ easily. Hence
$\Gamma_{[\redex]}$ has the form
\begin{equation}
\scalebox{0.84}{\xymatrix@C=2ex@R=1ex{
1&&&& \bullet \ar@{->}[drr] &&  && [2]\ar@{->}[drr] && && [1] \\
2&& \bullet \ar@{->}[dr]\ar@{->}[urr] &&  && \bullet \ar@{->}[dr]\ar@{->}[urr] &&  && \bullet \ar@{->}[urr]\\
3& [3] \ar@{->}[ur] && [4] \ar@{->}[dr] && \bullet \ar@{->}[ur] && \bullet \ar@{->}[dr] && \bullet\ar@{->}[ur] \\
4&&  && \bullet \ar@{->}[ur]\ar@{->}[drr]  &&&& \bullet\ar@{->}[ur]\ar@{->}[drr]\ \\
5&&&&  && \bullet\ar@{->}[urr] &&  && [5] \\
}}
\end{equation}
 By Proposition \ref{Prop:AR_label}, we can find almost all labels of $\Upsilon_{[\redez]}$ as follows:
 \begin{equation}
\scalebox{0.84}{\xymatrix@C=2ex@R=1ex{
1&&&& [3,5] \ar@{->}[drr] &&  && [2]\ar@{->}[drr] && && [1] \\
2&& [3,4] \ar@{->}[dr]\ar@{->}[urr] &&  && [2,5] \ar@{->}[dr]\ar@{->}[urr] &&  && [1,2]\ar@{->}[urr]\\
3& [3] \ar@{->}[ur] && [4] \ar@{->}[dr] && [2,*] \ar@{->}[ur] && [\dagger,5] \ar@{->}[dr] && [1,3]\ar@{->}[ur] \\
4&&  && [2,4] \ar@{->}[ur]\ar@{->}[drr]  &&&& [1,5]\ar@{->}[ur]\ar@{->}[drr]\ \\
5&&&&  && [1,4]\ar@{->}[urr] &&  && [5] \\
}}
\end{equation}
Finally, we can conclude that $*=3$ and $\dagger=4$, since the
labels of $\Upsilon_{[\redez]}$ coincide with $\Phi^+$.
\end{example}



By applying similar arguments of  Lemma \ref{Lem:AR_comb_prop1} and Proposition \ref{Prop:AR_label}, we have the following theorem for classical finite types ABCD:
\begin{theorem} \label{Rem:label_Dtype}
For any $\Upsilon_{[\redex]}$ of classical finite types, a sectional path shares a component; that is,
$$ \text{ if two roots $\alpha$ and $\beta$ are in a sectional path then $\alpha$ and $\beta$ share one component. }$$
\end{theorem}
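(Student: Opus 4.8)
The plan is to reduce the statement, via the reduction formula already extracted in the proof of Proposition \ref{Prop:meaning of number in arrow} (together with Proposition \ref{the prop}), to a purely combinatorial computation in the root lattice, and then to carry that computation out type by type. First I would reduce to endpoints. If $\alpha$ and $\beta$ lie on a sectional path $P$, then the sub-path of $P$ joining them is again sectional, since its length equals the Dynkin distance of the two residues; so it suffices to treat the case where $\alpha$ and $\beta$ are the two endpoints of a sectional path $\beta=\gamma_k\to\cdots\to\gamma_0=\alpha$ with residues $i_0,\ldots,i_k$. Because consecutive vertices are joined by arrows, consecutive residues are Dynkin-adjacent (Algorithm \ref{Alg_AbsAR}\,(Q2)), and sectionality forces $(i_0,i_1,\ldots,i_k)$ to be a geodesic in $\Delta$, that is, a simple path in the tree $\Delta$.

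Next I would invoke the reduction formula: there is $w\in\W$ with
\[ \alpha=w(\alpha_{i_0}), \qquad \beta=w(\gamma), \qquad \gamma\seteq s_{i_0}s_{i_1}\cdots s_{i_{k-1}}(\alpha_{i_k}). \]
Since $\W$ is a classical Weyl group, $w$ acts on the orthonormal basis $\{\epsilon_j\}$ as a signed permutation, $w(\epsilon_j)=\eta_j\epsilon_{\pi(j)}$ with $\eta_j\in\{\pm 1\}$. Writing $\alpha_{i_0}=\sum_j a_j\epsilon_j$ and $\gamma=\sum_j g_j\epsilon_j$, the coefficients of $\epsilon_{\pi(j)}$ in $\alpha$ and $\beta$ are $a_j\eta_j$ and $g_j\eta_j$; hence if $a_j$ and $g_j$ are nonzero of the same sign, so are $a_j\eta_j$ and $g_j\eta_j$, and $\epsilon_{\pi(j)}$ is a component shared by $\alpha$ and $\beta$. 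Thus it suffices to prove the following statement inside the root system: for a geodesic $(i_0,\ldots,i_k)$ in $\Delta$, the simple root $\alpha_{i_0}$ and the root $\gamma=s_{i_0}\cdots s_{i_{k-1}}(\alpha_{i_k})$ share a component. (Applied to general pairs on $P$, this also re-derives the type $A$ statement of Theorem \ref{Thm: labeling of type A}.)

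Finally I would establish this core claim by a direct telescoping computation, using that each $s_t$ with $t<n$ swaps $\epsilon_t\leftrightarrow\epsilon_{t+1}$, while the special reflection acts by $\epsilon_n\mapsto-\epsilon_n$ (types $B_n,C_n$) or $\epsilon_{n-1}\leftrightarrow-\epsilon_n$ (type $D_n$). For a geodesic contained in the linear part one obtains $\gamma=\epsilon_{i_0}-\epsilon_c$ for increasing residues and $\gamma=\epsilon_c-\epsilon_{i_0+1}$ for decreasing residues, sharing with $\alpha_{i_0}=\epsilon_{i_0}-\epsilon_{i_0+1}$ the component $+\epsilon_{i_0}$ or $-\epsilon_{i_0+1}$ respectively, exactly as in type $A$. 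When the special node $n$ is an endpoint of the geodesic, telescoping the swaps and then applying the sign-changing reflection yields $\gamma=\epsilon_{i_0}$ or $2\epsilon_{i_0}$ (types $B$, $C$, with $i_k=n$), or $\gamma=\epsilon_{i_0}+\epsilon_n$ (the corresponding $D$ cases), each visibly sharing a component with $\alpha_{i_0}$. The genuinely new cases are in type $D_n$, where a geodesic may cross the branch node $n-2$ between the two forks: for the branch geodesics $(n-1,n-2,n)$ and $(n,n-2,n-1)$ one computes $\gamma=\epsilon_{n-2}+\epsilon_{n-1}$ in both cases, which shares $+\epsilon_{n-1}$ with $\alpha_{n-1}=\epsilon_{n-1}-\epsilon_n$ and with $\alpha_n=\epsilon_{n-1}+\epsilon_n$ respectively. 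I expect the main obstacle to be precisely this type $D$ bookkeeping: because the sign-changing reflection turns differences of basis vectors into sums, one must track signs carefully to confirm that the shared basis vector occurs with the same sign in $\gamma$ and in $\alpha_{i_0}$, and one must enumerate all geodesic shapes through the fork (those ending at $n$, those ending at $n-1$, and the branch-crossing ones). Once these finitely many shapes are listed, each remaining computation is a routine telescoping of the explicit $\epsilon$-action.
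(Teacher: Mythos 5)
Your proposal is correct, and it fills in a proof that the paper essentially omits: the paper's entire argument for this theorem is the sentence ``by applying similar arguments of Lemma \ref{Lem:AR_comb_prop1} and Proposition \ref{Prop:AR_label},'' i.e.\ a pointer to the type~$A$ template in which one propagates a shared end along each arrow using the explicit $s_i$-action \eqref{Eqn:s_i action}. Your route is the same in substance --- pull the pair back to $\alpha_{i_0}$ and $\gamma=s_{i_0}\cdots s_{i_{k-1}}(\alpha_{i_k})$ via the $w$ produced in the proofs of Propositions \ref{the prop} and \ref{Prop:meaning of number in arrow}, compute $\gamma$ explicitly, and transport the shared $\epsilon$-coordinate back by $w$ --- but your two organizational choices are genuinely better suited to the general classical case: (i) replacing the case-by-case invariance statement of Lemma \ref{Lem:AR_comb_prop1} by the observation that $w$ acts as a signed permutation of the $\epsilon_j$ (so ``sharing a signed component'' is manifestly $\W$-invariant), and (ii) treating each pair $(\alpha,\beta)$ as the endpoints of its own sectional sub-path rather than chaining single arrows. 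Point (ii) matters: for the type $D$ branch-crossing geodesic $(n-1,n-2,n)$ the three roots are $w(\epsilon_{n-1}-\epsilon_n)$, $w(\epsilon_{n-2}-\epsilon_n)$, $w(\epsilon_{n-2}+\epsilon_{n-1})$, so consecutive pairs share \emph{different} components and no single component is common to all three; a naive transitivity argument along arrows (which is how Theorem \ref{Thm: labeling of type A} is deduced in type $A$) would break here, whereas your endpoint reduction proves exactly the pairwise statement that the theorem asserts. Your reduction to geodesic residue sequences is also justified correctly (a non-backtracking walk in the tree $\Delta$ is a geodesic, and backtracking is excluded by applying the sectional condition to pairs at distance two). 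The only gaps are cosmetic: you list the telescoped values of $\gamma$ only for $i_k=n$ and should also record the symmetric cases $i_0\in\{n-1,n\}$ (e.g.\ $\gamma=\epsilon_{i_k}+\epsilon_n$ sharing $+\epsilon_n$ with $\alpha_n=\epsilon_n$ in type $B$), but these are the same routine computations and none of them fails.
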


We do not know whether the above theorem holds for exceptional types $E$ and $F_4$ or not. However, we can observe the following proposition without consideration of types: 

\begin{proposition} \label{prop: sectional simplicity}
 For $\al$ and $\beta$ in a sectional path in $\Upsilon_{[\redex]}$, there exists no $\{ \gamma_{i} \ | \ 1 \le i \le r \}$ in the same sectional path such that
$$ \sum_{i=1}^r \gamma_i =\alpha+\beta \quad \text{ and } \quad  \gamma_i \ne \al,\be \ \ \text{ for all } 1 \le i \le r.$$
\end{proposition}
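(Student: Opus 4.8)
The plan is to reduce the claim to a statement about coefficients in a basis of simple roots, exploiting the triangular shape of the roots occurring along a sectional path. Fix a reduced expression $\redex$ in the class and list the roots of the sectional path containing $\alpha$ and $\beta$ as $\gamma_0 \to \gamma_1 \to \cdots \to \gamma_s$ (so that $\gamma_{t-1}\prec_{[\redex]}\gamma_t$), with residues $i_0,i_1,\dots,i_s$. First I would record that these residues are pairwise distinct: if $i_t=i_{t'}$ for $t\ne t'$ then $d_{\Delta}(i_t,i_{t'})=0$, whereas $d_{[\redex]}(\gamma_t,\gamma_{t'})\ge 1$ since $\gamma_t,\gamma_{t'}$ are distinct vertices joined by a path, contradicting the defining property $d_{[\redex]}=d_{\Delta}$ of a sectional path (Definition \ref{Def:sectional}). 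As in the proof of Proposition \ref{Prop:meaning of number in arrow} (which rests on Proposition \ref{the prop}), there is a single $w\in\W$ with $\gamma_t=w\,\eta_t$ for every $t$, where $\eta_t\seteq s_{i_0}s_{i_1}\cdots s_{i_{t-1}}(\alpha_{i_t})$.

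The key structural observation is that $\eta_t$ is triangular with respect to the linearly independent family $\{\alpha_{i_0},\dots,\alpha_{i_s}\}$. Indeed, since none of $i_0,\dots,i_{t-1}$ equals $i_t$, applying the reflections $s_{i_{t-1}},\dots,s_{i_0}$ to $\alpha_{i_t}$ only ever adds multiples of $\alpha_{i_0},\dots,\alpha_{i_{t-1}}$ and never alters the coefficient of $\alpha_{i_t}$. Hence $\eta_t$ is supported on $\{\alpha_{i_0},\dots,\alpha_{i_t}\}$ and its $\alpha_{i_t}$-coefficient equals $1$.

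Now I would argue by contradiction. Suppose $\sum_{i=1}^r \gamma_{a_i}=\alpha+\beta$ with all $\gamma_{a_i}$ on the path and $\gamma_{a_i}\ne\alpha,\beta$; writing $\alpha=\gamma_p$ and $\beta=\gamma_q$ with $p<q$, the indices satisfy $a_i\in\{0,\dots,s\}\setminus\{p,q\}$. Applying $w^{-1}$ turns the relation into $\sum_i \eta_{a_i}=\eta_p+\eta_q$, an identity among genuine vectors that I can test coefficientwise against $\{\alpha_{i_0},\dots,\alpha_{i_s}\}$. Set $N=\max_i a_i$ (the right-hand side is nonzero, so $r\ge1$); since $a_i\ne q$ we have $N\ne q$. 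If $N>q$, then the coefficient of $\alpha_{i_N}$ on the left equals $|\{i:a_i=N\}|\ge1$, because only the top terms contribute and each does so with leading coefficient $1$, while on the right it is $0$ as $N>q>p$. If $N<q$, then the entire left-hand side is supported on $\{\alpha_{i_0},\dots,\alpha_{i_N}\}$, so the coefficient of $\alpha_{i_q}$ is $0$, whereas on the right it equals $1$ (coming from $\eta_q$). Either way we reach a contradiction, which proves the proposition.

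The one step that genuinely needs care — and the main obstacle — is the existence of a \emph{single} $w$ realizing $\gamma_t=w\,\eta_t$ simultaneously for all vertices of the sectional path, rather than merely for its two endpoints. This cannot in general be read off as a prefix of $\redex$, since a sectional path need not occupy consecutive positions in any reduced word in $[\redex]$ (intervening roots of unrelated residue may be unavoidable); it is precisely here that the conjugation framework behind Proposition \ref{Prop:meaning of number in arrow} is essential. Once that is in hand, the triangularity and the extremal-coefficient bookkeeping are routine and, notably, completely independent of the Dynkin type.
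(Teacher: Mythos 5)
Your argument is correct and is essentially the paper's own: the published proof consists only of the remarks that for classical types the claim follows from Theorem \ref{Rem:label_Dtype} and that in general one writes the sectional path in the form $\{\,w(\al_{i_1}) \to ws_{i_1}(\al_{i_2}) \to \cdots\,\}$ and ``checks'' the assertion --- your observation that the residues along a sectional path are pairwise distinct, together with the triangularity of $s_{i_0}\cdots s_{i_{t-1}}(\al_{i_t})$ with respect to $\{\al_{i_0},\dots,\al_{i_s}\}$ and the leading-coefficient comparison at the maximal index, is exactly that check, carried out uniformly and type-independently. You also correctly isolate the one genuinely delicate ingredient, namely the existence of a single $w$ conjugating the entire path simultaneously, which the paper likewise takes from Propositions \ref{the prop} and \ref{Prop:meaning of number in arrow} without spelling out the iteration, so your reliance on it matches the paper's own level of detail.
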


\begin{proof}
Our assertion for classical types follows from the previous theorem. By considering a sectional path
$$\left\{ w(\al_{i_1}) \to ws_{i_1}(\al_{i_2}) \to \cdots \to w\prod_{s=1}^{k} s_{i_{s}}(\al_{i_{k+1}}) \right\}$$
for any $w$ of finite type, one can check our assertion in general.
\end{proof}

\begin{example} Recall that the set of positive roots can be expressed as $$\{ \, \epsilon_i \pm  \epsilon_{j} \,  | \,  1\leq i<j \leq n \, \}.$$
For type $D_5$, consider the reduced expression
\[ \redez= (s_2, s_1, s_3, s_2, s_1, s_5, s_3 , s_2, s_1, s_4, s_3, s_2, s_1, s_5, s_3, s_2, s_1, s_4, s_3, s_5). \]

The combinatorial AR quiver $\Upsilon_{[\redez]}$ has the form of
\[  \scalebox{0.8}{\xymatrix@C=2ex@R=1ex{
1&&\srt{1}{-2}\ar@{->}[dr]  && \bullet \ar@{->}[dr]  &&  \bullet \ar@{->}[dr]  &&\bullet \ar@{->}[dr] && \srt{1}{-3} \ar@{->}[dr]\\
2&&& \bullet \ar@{->}[dr]\ar@{->}[ur]  &&\bullet \ar@{->}[dr]\ar@{->}[ur]  &&\bullet \ar@{->}[dr]\ar@{->}[ur]  && \bullet \ar@{->}[dr]\ar@{->}[ur] && \srt{2}{-3} \\
3& &\srt{3}{-5}\ar@{->}[dr]\ar@{->}[ur]  &&\bullet\ar@{->}[ddr]\ar@{->}[ur]  &&\bullet\ar@{->}[dr]\ar@{->}[ur]  && \bullet \ar@{->}[ddr]\ar@{->}[ur] && \srt{2}{-4}\ar@{->}[ur] \\
4&&& \srt{3}{4}\ar@{->}[ur]  &&&& \bullet \ar@{->}[ur] \\
5&\srt{4}{-5}\ar@{->}[uur]  &&&& \bullet \ar@{->}[uur]  &&&& \srt{2}{5}\ar@{->}[uur]
}}.\]
Here $\epsilon_i \pm \epsilon_j$ is denoted by $\left< i, \pm j \right>$. Note that the labels filled in the previous quiver are not hard to find by direct computations. Now, by Theorem \ref{Rem:label_Dtype}, we can complete to find all labels in $\Upsilon_{[\redez]}$.
\[  \scalebox{0.8}{\xymatrix@C=2ex@R=1ex{
1&&\srt{1}{-2}\ar@{->}[dr]  && \srt{2}{-5}\ar@{->}[dr]  &&  \srt{4}{5}\ar@{->}[dr]  && \srt{3}{-4}\ar@{->}[dr] && \srt{1}{-3} \ar@{->}[dr]\\
2&&& \srt{1}{-5}\ar@{->}[dr]\ar@{->}[ur]  &&\srt{2}{4}\ar@{->}[dr]\ar@{->}[ur]  && \srt{3}{5}\ar@{->}[dr]\ar@{->}[ur]  && \srt{1}{-4}\ar@{->}[dr]\ar@{->}[ur] && \srt{2}{-3} \\
3& &\srt{3}{-5}\ar@{->}[dr]\ar@{->}[ur]  &&\srt{1}{4}\ar@{->}[ddr]\ar@{->}[ur]  &&\srt{2}{3}\ar@{->}[dr]\ar@{->}[ur]  && \srt{1}{5}\ar@{->}[ddr]\ar@{->}[ur] && \srt{2}{-4}\ar@{->}[ur] \\
4&&& \srt{3}{4}\ar@{->}[ur]  &&&& \srt{1}{2}\ar@{->}[ur] \\
5&\srt{4}{-5}\ar@{->}[uur]  &&&& \srt{1}{3}\ar@{->}[uur]  &&&& \srt{2}{5}\ar@{->}[uur]
}}\]
\end{example}

\begin{example} In Example \ref{Ex: C3}, $\Upsilon_{[\redez]}$ can be also labeled in terms of orthonormal basis:
$$\Upsilon_{[\redez]}=
 \scalebox{0.84}{\xymatrix@C=1ex@R=1ex{
1& \epsilon_{1}-\epsilon_2\ar@{->}[dr] &&&&\epsilon_1 +\epsilon_2 \ar@{->}[dr]&&&&\\
2&& \epsilon_1-\epsilon_3 \ar@{=>}[dr] && \epsilon_1+\epsilon_3\ar@{->}[ur] && \epsilon_2-\epsilon_3 \ar@{=>}[dr] && \epsilon_2+\epsilon_3 \ar@{=>}[dr]& \\
3&&& 2\epsilon_1 \ar@{=>}[ur] &&&&  2\epsilon_2 \ar@{=>}[ur] && 2\epsilon_3 }},
$$
which implies Theorem \ref{Rem:label_Dtype}. Note that, for any reduced expression,
every positive root of the form $2\epsilon_i$ has residue $n$ and any positive root has residue $n$ is of the form $2\epsilon_i$.
\end{example}

\section{Application to KLR algebras  and PBW bases} \label{sec: KLR}

 In this section, we apply our results in previous sections to the representation theory of KLR algebras which were introduced by
 Khovanov-Lauda \cite{KL09} and Rouquier \cite{R08}, independently.

\subsection{KLR algebra} \label{subsec: KLR}
Let $I$ be an index set. A \emph{symmetrizable Cartan datum}
$\mathsf{D}$ is a quintuple $(\cmA,\wl,\Pi,\wl^{\vee},\Pi^{\vee})$
consisting of
{\rm (a)} an integer-valued matrix $\cmA=(a_{ij})_{i,j \in I}$,
called \emph{the symmetrizable generalized Cartan matrix},
{\rm (b)} a free abelian group $\wl$, called the \emph{weight lattice},
{\rm (c)} $\Pi= \{ \alpha_i \in \wl \mid \ i \in I \}$, called
the set of \emph{simple roots},
{\rm (d)} $\wl^{\vee}\seteq {\rm Hom}(\wl, \Z)$, called the \emph{coweight lattice},
{\rm (e)} $\Pi^{\vee}= \{ h_i \ | \ i \in I \}\subset P^{\vee}$, called
the set of \emph{simple coroots}, satisfying
$$\langle h_i,\alpha_j \rangle = a_{ij} \text{ for all } i,j \in I \text{ and $\Pi$ is linearly independent}.$$

The free abelian group $\rl\seteq\soplus_{i \in I} \Z \alpha_i$ is
called the \emph{root lattice}. Set $\rl^{+}= \sum_{i \in I}
\Z_{\ge 0} \alpha_i$. 

Let $\cor$ be a commutative ring. For $i,j\in I$ such that $i\not=j$
and let us take  a family of polynomials $(Q_{ij})_{i,j\in I}$ in
$\cor[u,v]$ which are of the form
\begin{equation} \label{eq:Q}
Q_{ij}(u,v) =
\delta(i \ne j)\sum\limits_{ \substack{ (p,q)\in \Z^2_{\ge0} \\
d_i \times  p+d_j \times q=-d_i \times a_{ij}}} t_{i,j;p,q} u^p v^q
\end{equation}
with $t_{i,j;p,q}\in\cor$, $t_{i,j;p,q}=t_{j,i;q,p}$ and
$t_{i,j;-a_{ij},0} \in \cor^{\times}$. Thus we have
$Q_{i,j}(u,v)=Q_{j,i}(v,u)$.

We denote by $\mathfrak{S}_{n} = \langle \mathfrak{s}_1, \ldots,
\mathfrak{s}_{n-1} \rangle$ the symmetric group on $n$ letters,
where $\mathfrak{s}_i\seteq (i, i+1)$ is the transposition of $i$
and $i+1$. Then $\mathfrak{S}_n$ acts on $I^n$ by place
permutations.

For $n \in \Z_{\ge 0}$ and $\beta \in \rl^+$ such that $\het(\be) =
n$, we set
$$I^{\beta} = \{\nu = (\nu_1, \ldots, \nu_n) \in I^{n} \ | \ \alpha_{\nu_1} + \cdots + \alpha_{\nu_n} = \beta \}.$$

\begin{definition}
For $\beta \in \rl^+$ with $|\beta|=n$, the {\em
Khovanov-Lauda-Rouquier algebra}  $R(\beta)$  at $\beta$ associated
with a symmetrizable Cartan datum $(\cmA,\wl,
\Pi,\wl^{\vee},\Pi^{\vee})$ and a matrix $(Q_{ij})_{i,j \in I}$ is
the $\Z$-gradable $\cor$-algebra generated by the elements $\{
e(\nu) \}_{\nu \in  I^{\beta}}$, $ \{x_k \}_{1 \le k \le n}$, $\{
\tau_m \}_{1 \le m \le n-1}$ satisfying the following defining
relations:
\begin{align*}
& e(\nu) e(\nu') = \delta_{\nu, \nu'} e(\nu), \ \
\sum_{\nu \in  I^{\beta} } e(\nu) = 1, \ \ x_{k} x_{m} = x_{m} x_{k}, \ \ x_{k} e(\nu) = e(\nu) x_{k}, \allowdisplaybreaks \\
& \tau_{m} e(\nu) = e(\mathfrak{s}_{m}(\nu)) \tau_{m}, \ \
\tau_{k}\tau_{m} = \tau_{m} \tau_{k} \ \ \text{if} \ |k-m|>1,  \ \  \tau_{k}^2 e(\nu) = Q_{\nu_{k}, \nu_{k+1}} (x_{k}, x_{k+1}) e(\nu), \allowdisplaybreaks \\
& (\tau_{k} x_{m} - x_{\mathfrak{s}_k(m)} \tau_{k}) e(\nu) =
\begin{cases}
-e(\nu) \ \ & \text{if} \ m=k, \nu_{k} = \nu_{k+1}, \\
e(\nu) \ \ & \text{if} \ m=k+1, \nu_{k}=\nu_{k+1}, \\
0 \ \ & \text{otherwise},
\end{cases}  \allowdisplaybreaks \\
& (\tau_{k+1} \tau_{k} \tau_{k+1}-\tau_{k} \tau_{k+1} \tau_{k}) e(\nu)  =\begin{cases} \dfrac{Q_{\nu_{k}, \nu_{k+1}}(x_{k}, x_{k+1}) -
Q_{\nu_{k}, \nu_{k+1}}(x_{k+2}, x_{k+1})} {x_{k} - x_{k+2}}e(\nu) \
\ & \text{if} \
\nu_{k} = \nu_{k+2}, \\
0 \ \ & \text{otherwise}.
\end{cases}
\end{align*}
\end{definition}

For $\beta, \gamma \in \rl^+$ with $\het(\beta)=m$, $\het(\gamma)=
n$,
 set
$$e(\beta,\gamma)=\displaystyle\sum_{\substack%
{\nu \in I^{m+n}, \\ (\nu_1, \ldots ,\nu_m) \in I^{\beta},\\
(\nu_{m+1}, \ldots ,\nu_{m+n}) \in I^{\gamma}}} e(\nu) \in
R(\beta+\gamma).$$ Then $e(\beta,\gamma)$ is an idempotent. Let
\begin{align} \label{eq:embedding}
R( \beta)\otimes R( \gamma  )\to e(\beta,\gamma)R(
\beta+\gamma)e(\beta,\gamma)
\end{align} be the $\cor$-algebra homomorphism given by
\begin{equation*}
\begin{aligned}
& e(\mu)\otimes e(\nu)\mapsto e(\mu*\nu) \ \ (\mu\in I^{\beta}),\\
& x_k\otimes 1\mapsto x_ke(\beta,\gamma) \ \  (1\le k\le m),  \quad 1\otimes x_k\mapsto x_{m+k}e(\beta,\gamma) \ \  (1\le k\le n), \\
& \tau_k\otimes 1\mapsto \tau_ke(\beta,\gamma) \ \  (1\le k<m),
\quad \ 1\otimes \tau_k\mapsto \tau_{m+k}e(\beta,\gamma) \ \  (1\le
k<n),
\end{aligned}
\end{equation*}
where  $\mu*\nu$ is the concatenation of $\mu$ and $\nu$; i.e.,
$\mu*\nu=(\mu_1,\ldots,\mu_m,\nu_1,\ldots,\nu_n)$.

\medskip
For a  $R(\beta)$-module $M$ and a  $R(\gamma)$-module $N$, we
define the \emph{convolution product} $M\conv N$ by
$$M\conv N \seteq R(\beta + \gamma) e(\beta,\gamma)
\otimes_{R(\beta )\otimes R( \gamma)}(M\otimes N). $$

For a graded $R(\be)$-module $M=\bigoplus_{k \in \Z} M_k$, we define
$qM =\bigoplus_{k \in \Z} (qM)_k$, where
 \begin{align*}
 (qM)_k = M_{k-1} & \ (k \in \Z).
 \end{align*}
We call $q$ the \emph{grading shift functor} on the category of
graded $R(\be)$-modules.

Let $\Rep(R(\be))$ be the category consisting of finite dimensional
graded $R(\be)$-modules and
 $[\Rep(R(\be))]$ be the Grothendieck group of $\Rep(R(\be))$. Then $[\Rep(R)] \seteq \soplus_{\be \in \rl^+} [\Rep(R(\be))]$
 has a natural $\Z[q,q^{-1}]$-algebra structure induced by the convolution product $\conv$ and the grading shift functor $q$.
In this paper, we usually ignore grading shifts.

For an $R(\be)$-module $M$ and an $R(\gamma_k)$-module $M_k$ $(1 \le
k \le n)$, we denote by
$$ M^{\conv 0} \seteq \cor, \quad M^{\conv r} = \underbrace{ M \conv \cdots \conv M }_r, \quad \dct{k=1}{n} M_k = M_1 \conv \cdots \conv M_n.$$

\begin{theorem}[{\cite{KL09, R08}}] \label{Thm:categorification}
For a given symmetrizable Cartan datum $\mathsf{D}$, let
$U_{\Z[q,q^{-1}]}(\g)^\vee$ the dual of the integral form of the
negative part of the quantum group $U_q(\g)$ associated with
$\mathsf{D}$ and $R$ be the KLR algebra associated with $\mathsf{D}$
and $(Q_{ij}(u,v))_{i,j \in I}$. Then we have
\begin{align}
U^-_{\Z[q,q^{-1}]}(\g)^{\vee} \simeq [\Rep(R)]. \label{eq:KLRU}
\end{align}
\end{theorem}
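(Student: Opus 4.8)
The statement is the fundamental categorification theorem of Khovanov--Lauda \cite{KL09} and Rouquier \cite{R08}, so the plan is to reproduce the essential structure of their argument, which I present in the Khovanov--Lauda formulation as an explicit isomorphism of twisted bialgebras. First I would pass to the dual picture. Let $K_0(R) \seteq \soplus_{\be \in \rl^+} K_0(R(\be))$ be the split Grothendieck group of the category of finitely generated projective graded $R(\be)$-modules. For $P$ projective and $M \in \Rep(R(\be))$ the space $\mathrm{Hom}_{R(\be)}(P,M)$ is finite-dimensional and graded, and the assignment $\langle [P],[M]\rangle \seteq \dim_q \mathrm{Hom}_{R(\be)}(P,M)$ defines a perfect $\Z[q,q^{-1}]$-pairing between $K_0(R)$ and $[\Rep(R)]$. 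The convolution product $\conv$ and the restriction functors endow $K_0(R)$ and $[\Rep(R)]$ with the structure of a twisted $\Z[q,q^{-1}]$-bialgebra and its dual. It therefore suffices to construct an isomorphism of bialgebras $\gamma : U^-_{\Z[q,q^{-1}]}(\g) \xrightarrow{\sim} K_0(R)$ and then to transport it through the two perfect pairings to obtain \eqref{eq:KLRU}.

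Second I would define $\gamma$ on the divided-power generators by $f_i^{(n)} \mapsto [P(i^{(n)})]$, where $P(i^{(n)})$ is the indecomposable projective cover of the simple $R(n\al_i)$-module. That $\gamma$ respects the easy defining relations of $U^-_{\Z[q,q^{-1}]}(\g)$ is essentially formal: commutation of generators attached to orthogonal simple roots follows from the idempotent decomposition $\sum_{\nu} e(\nu)=1$ together with the relation $\tau_k \tau_m = \tau_m \tau_k$ for $|k-m|>1$, while the compatibility of divided powers comes from identifying $R(n\al_i)$ (where $Q_{ii}=0$ by \eqref{eq:Q}, so $\tau_k^2 e(\nu)=0$) with the nil-Hecke algebra, whose unique indecomposable projective categorifies $f_i^{(n)}$.

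The hard part, and the main obstacle, is to verify the quantum Serre relations
\[ \sum_{a+b=1-a_{ij}} (-1)^a f_i^{(a)} f_j f_i^{(b)} = 0 \qquad (i \ne j) \]
in $K_0(R)$. I would establish these by restricting to the rank-two subalgebra attached to $\{i,j\}$ and producing, for the corresponding string of projective modules, an exact sequence whose alternating Euler characteristic realizes the left-hand side; this is exactly the point where the explicit polynomials $Q_{ij}(u,v)$ of \eqref{eq:Q} enter, through the computation of $\tau_k^2 e(\nu)$ and the diagrammatic relations among the generating morphisms. (Rouquier's alternative route replaces this computation by the uniqueness of a minimal categorification of the rank-two datum.) Granting the Serre relations, $\gamma$ is a well-defined algebra homomorphism, and it intertwines the coproducts because restriction of $R(\be+\gamma)$-modules to $e(\be,\gamma)R(\be+\gamma)e(\be,\gamma)$ implements the comultiplication on $U^-$.

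Finally I would show $\gamma$ is bijective. Surjectivity holds because every projective is a direct summand of an iterated convolution product of the $P(i^{(n)})$, so the classes $[P(i^{(n)})]$ generate $K_0(R)$ as an algebra. For injectivity I would compare against a PBW-type basis: fixing a reduced expression $\redez$ of $w_0$, the proper standard modules $\{S_{\redez}(\be)\}_{\be \in \PR}$ and their ordered convolution products furnish a basis of $[\Rep(R)]$ whose (graded) characters coincide with the dual PBW monomials, and the latter form a $\Z[q,q^{-1}]$-basis of $U^{-\vee}_{\Z[q,q^{-1}]}(\g)$. Matching these two bases, and using that the pairing above is perfect, forces the graded dual of $\gamma$ to be the desired isomorphism $[\Rep(R)] \simeq U^-_{\Z[q,q^{-1}]}(\g)^{\vee}$ of \eqref{eq:KLRU}.
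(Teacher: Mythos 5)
The paper does not prove this statement at all: Theorem \ref{Thm:categorification} is quoted verbatim from \cite{KL09} and \cite{R08}, so the only comparison available is with the original arguments there. Your outline reproduces the Khovanov--Lauda strategy faithfully through the first three steps: the perfect pairing between the split Grothendieck group of projectives and $[\Rep(R)]$, the assignment $f_i^{(n)}\mapsto [P(i^{(n)})]$ via the nil-Hecke algebra $R(n\al_i)$, the verification of the quantum Serre relations in rank two, and surjectivity from the fact that every projective is a summand of a convolution of the $P(i^{(n)})$. (One small slip: the commutativity $[P_i][P_j]=[P_j][P_i]$ for $a_{ij}=0$ comes from $\tau_1^2e(ij)=Q_{ij}(x_1,x_2)e(ij)=t_{i,j;0,0}\,e(ij)$ being a unit, not from the far-commutation relation $\tau_k\tau_m=\tau_m\tau_k$ for $|k-m|>1$, which concerns non-adjacent strands.)

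The genuine gap is in your injectivity step. The theorem is stated for an arbitrary symmetrizable Cartan datum, but your argument fixes a reduced expression of $w_0$ and invokes the proper standard modules $S_{\redez}(\be)$ of Theorem \ref{thm: BkMc}; neither exists outside finite type, so the argument proves nothing in the affine or indefinite cases. Even in finite type it is circular as written: the construction of the cuspidal/proper standard modules in \cite{BKM12,Mc12} and the identification of their characters with dual PBW monomials already presuppose the categorification isomorphism \eqref{eq:KLRU}. The original proof avoids both problems by a different device: one computes that the graded Euler form $\dim_q\mathrm{HOM}(P_\nu,P_{\nu'})$ on $K_0(R)$ pulls back under $\gamma$ to Lusztig's bilinear form on the free twisted bialgebra, checks that the Serre relators lie in (indeed generate) the radical of that form, and concludes injectivity from the nondegeneracy of the induced form on $U^-_{\Z[q,q^{-1}]}(\g)$. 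Replacing your final paragraph with this bilinear-form argument would close the gap and recover the theorem in the stated generality.
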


\medskip

From now on, we shall deal with the representation theory of KLR
algebras which are associated to the Cartan matrix $\cmA$ of finite
types.

\begin{definition} \label{Def: minimal pair}\cite[\S 2.1]{Mc12}.
For a convex total order $<$ on $\Phi(w)$, a pair $(\alpha,\beta)$ with
$\alpha<\beta$ is called a {\it minimal pair of $\gamma \in \Phi(w)$
with respect to the  convex total order $<$} if
\begin{itemize}
\item $\gamma=\alpha+\beta \in \Phi(w)$,
\item there exist {\it no} pair $(\alpha',\beta') \in (\Phi(w))^2$ such that $\gamma=\alpha'+\beta' \text{ and }\alpha<\alpha'<\gamma<\beta'<\beta.$
\end{itemize}
\end{definition}

\begin{convention} \label{conv: conv1}
For a reduced expression $\redex$ of $w \in \W$, we fix a labeling
of $\Phi(w)$ as $\{\beta^{\tw}_k \ | \ 1 \le k \le \ell(w) \}$.
\begin{enumerate}
\item[{\rm (i)}] We identify a sequence $\um_\tw=(m_1,m_2,\ldots,m_{\ell(w)}) \in \Z_{\ge 0}^{\ell(w)}$ with
$$(m_1\beta^\tw_1 ,m_2\beta^\tw_2,\ldots,m_{\ell(w)} \beta^\tw_{\ell(w)}) \in (\rl^+)^{\ell(w)}.$$
\item[{\rm (ii)}] For a sequence $\um_\tw$ and another reduced expression $\tw'$ of $w$, $\um_{\tw'}$ is a sequence in $\Z_{\ge 0}^{\ell(w)}$
by considering $\um_\tw$ as a sequence of positive roots,
rearranging with respect to $<_{\tw'}$ and applying the convention
{\rm (i)}.
\item[{\rm (iii)}]
For a sequence $\um_\tw\in \Z_{\ge 0}^{\ell(w)}$, the weight
$\wt(\um_\tw)$ of $\um_\tw$ is defined by $\sum_{i=1}^{\ell(w)}
m_i\beta^\tw_i \in \rl^+$.
\end{enumerate}
\end{convention}

We usually drop the script $\tw$ if there is no fear of confusion.

\begin{definition} [\cite{Mc12,Oh15E}] \label{def: redezletb}
For sequences $\um$, $\um' \in \Z_{\ge 0}^{\ell(w)}$, we define an
order $\le^\tb_{\tw}$ as follows:
\begin{eqnarray*}&&
\parbox{95ex}{
$\um'=(m'_1,\ldots,m'_{\ell(w)}) <^\tb_{\redex}
\um=(m_1,\ldots,m_{\ell(w)}) $ if and only if  $\wt(\um)=\wt(\um')$ and there exist integers
$k$, $s$ such that $1 \le k \le s \le \ell(w)$, $m_t'=m_t$ $(t<k)$,
$m'_k  <  m_k$, and $m_t'=m_t$ $(s<t\le \ell(w))$, $m'_{s}  <
m_{s}$. }
\end{eqnarray*}
\end{definition}

The following order on sequences of positive roots was introduced in
\cite{Oh15E}.

\begin{definition} \cite{Oh15E} \label{def: redezprectb}
For sequences $\um$, $\um' \in \Z_{\ge 0}^{\ell(w)}$, we define an
order $\prec^\tb_{[\tw]}$ as follows:
\begin{eqnarray}&&
\parbox{65ex}{
$\um'=(m'_1,\ldots,m'_{\ell(w)}) \prec^\tb_{[\tw]}
\um=(m_1,\ldots,m_{\ell(w)}) $ if and only if  $\um'_{\tw'}
<^\tb_{\tw'}  \um_{\tw'}$ for all reduced expression $\tw' \in
[\tw]$. }\label{eq: redezprectb}
\end{eqnarray}
\end{definition}
Note that  $\prec^\tb_{[\tw]}$ is {\it far coarser} than
$<^\tb_{\tw}$.

\begin{definition} \label{def: [tw]-simple pair}
A pair $(\alpha,\beta)$ of positive roots is {\it $[\redez]$-simple} if there exists no sequence $\um \in \Z^{N}_{\ge 0}$ such that
\begin{align}\label{eq: [tw]-simple pair}
\um \prec^\tb_{[\redez]}  (\alpha,\beta).
\end{align}
\end{definition}

\medskip

For a module $M$, we denote by $\hd(M)$ the head of $M$ and by
$\soc(M)$ the socle of $M$.

\begin{theorem} \cite{BKM12,Mc12}\label{thm: BkMc}
Let $R$ be the KLR algebra corresponding to a Cartan matrix $\cmA$
of finite type. For each positive root $\beta \in \PR$, there exists
a simple module $S_{\redez}(\beta)$ satisfying the following
properties:
\begin{itemize}
\item[({\rm a})] $S_{\widetilde{w}_0}(\beta)^{\conv m}$ is a simple $R(m\be)$-module.
\item[({\rm b})] For every $\um_{\redez} \in \Z_{\ge 0}^{\ell(w_0)}$, there exists a non-zero $R$-module homomorphism
\begin{equation} \label{eq: def of r}
\begin{aligned}
&\rmat{\um} \colon \Stom \seteq S_{\redez}(\beta_1)^{\conv m_1}\conv\cdots \conv S_{\redez}(\beta_{\ell(w_0)})^{\conv m_{\ell(w_0)}}\\
&\hspace{20ex}{\Lto}\Sgetsm \seteq
S_{\redez}(\beta_{\ell(w_0)})^{\conv m_{\ell(w_0)}}\conv\cdots \conv
S_{\redez}(\beta_1)^{\conv m_1}.
\end{aligned}
\end{equation}
such that $${\rm Hom}_{R(\wt(\um))}(\Stom,\Sgetsm)=\cor \cdot
\rmat{\um}$$ and $${\rm Im}(\rmat{\um}) \simeq \hd\left(\Stom\right)
\simeq \soc\left(\Sgetsm\right) \text{ is simple.}$$
\item[({\rm c})] For any sequence $\um_{\redez} \in \Z_{\ge 0}^{\ell(w_0)}$, we have
\begin{align} \label{eq: not good filter}
[\Stom] \in [{\rm Im}(\rmat{\um})] + \displaystyle\sum_{
\substack{\um' <^{\tb}_\redez \um \\ \wt(\um')=\wt(\um)} } \Z_{\ge
0}[q^{\pm1}] [{\rm Im}(\rmat{\um'})].
\end{align}
\item[({\rm d})] For any sequence $\um_{\redez} \in \Z_{\ge 0}^{\ell(w_0)}$, $\Stom$ has
a unique simple head $\hd\left(\Stom\right)$ and $\Stom \not \simeq
\overset{\to}{S}_{\redez}(\um')$ if $\um \ne \um'$.
\item[({\rm e})] For every simple $R$-module $M$, there exists a unique sequence $\um \in \Z_{\ge 0}^{\mathsf{N}}$ such that
$M \simeq {\rm Im}(\rmat{\um}) \simeq {\rm hd}\big(\Stom \big).$
\item[({\rm f})] For any minimal pair $(\beta^\redez_k,\beta^\redez_l)$ of $\beta^\redez_j=\beta^\redez_k+\beta^\redez_l$ with respect to $<_\redez$,
there exists an exact sequence $$ \qquad 0 \to S_{\redez}(\beta_j)
\to S_{\redez}(\beta_k) \conv S_{\redez}(\beta_l) \overset{{\mathbf
r}_{\um}}{\Lto} S_{\redez}(\beta_l) \conv S_{\redez}(\beta_k) \to
S_{\redez}(\beta_j) \to 0,$$ where $\um_{\redez} \in \Z_{\ge
0}^{\ell(w_0)}$ such that $m_k=m_l=1$ and $m_i=0$ for all $i \ne
k,l$.
\end{itemize}
\end{theorem}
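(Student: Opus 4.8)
The plan is to reconstruct the cuspidal/proper-standard module theory of McNamara \cite{Mc12} and Brundan--Kleshchev--McNamara \cite{BKM12}, taking as input the convex total order $<_\redez$ on $\PR$ determined by the reduced expression $\redez$ of $w_0$. First I would define, for each $\beta \in \PR$, the \emph{cuspidal module} $S_\redez(\beta)$ as the unique simple $R(\beta)$-module whose nonzero restrictions to proper parabolic subalgebras $R(\gamma)\otimes R(\delta)$ involve only decompositions compatible with the convex order, i.e. every root appearing on the left is $\le_\redez \beta$ and every root on the right is $\ge_\redez \beta$. Establishing the existence and uniqueness of such a simple module is the heart of \cite{Mc12}; it proceeds by induction along the convex order, using the restriction/induction adjunction together with the shuffle-algebra realization of $[\Rep(R)]$ to control which $\nu$-weight spaces survive restriction. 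This is the step I expect to be the main obstacle.

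Once the modules $S_\redez(\beta)$ are in hand, I would prove they are \emph{real} simple modules, i.e. $S_\redez(\beta)\conv S_\redez(\beta)$ is simple. In finite type every positive root is real, and the R-matrix machinery (see e.g. \cite{KKK13b}) provides a renormalized intertwiner whose self-composition is, up to a scalar, the identity; this forces the self-convolution to be simple, and property (a) follows for all powers by a short induction. The same intertwiners furnish, for each sequence $\um$, the homomorphism $\rmat{\um}\colon \Stom \to \Sgetsm$ of (b). The head--socle duality for R-matrices then shows that ${\rm Im}(\rmat{\um})$ is simple and coincides with both $\hd(\Stom)$ and $\soc(\Sgetsm)$, while convexity of $<_\redez$ combined with reality of the factors forces ${\rm Hom}_{R(\wt(\um))}(\Stom,\Sgetsm)$ to be one-dimensional.

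For the triangularity statements (c)--(e) I would pass to the Grothendieck group and invoke the categorification isomorphism (Theorem \ref{Thm:categorification}): the classes $[\Stom]$ are the images of the dual PBW monomials attached to $\redez$, so Lusztig's PBW theory gives that $[\Stom]$ equals $[{\rm Im}(\rmat{\um})]$ modulo a $\Z_{\ge 0}[q^{\pm1}]$-combination of strictly smaller terms with respect to the bilinear order $<^\tb_\redez$. This unitriangularity directly yields the unique simple head of each $\Stom$, the classification of simple modules by the sequences $\um$ in (e), and the distinctness $\Stom \not\simeq \overset{\to}{S}_\redez(\um')$ for $\um \ne \um'$ in (d). Finally, for a minimal pair $(\beta^\redez_k,\beta^\redez_l)$ of $\beta^\redez_j$ I would analyze the length-two intertwiner $\rmat{\um}\colon S_\redez(\beta_k)\conv S_\redez(\beta_l) \to S_\redez(\beta_l)\conv S_\redez(\beta_k)$: its homological degree is computed from $-(\beta_k,\beta_l)$ via the defining polynomials $Q_{ij}$, and minimality—the absence of any intermediate decomposition in the convex order—forces both the kernel and the cokernel to be the single simple module $S_\redez(\beta_j)$, producing the four-term exact sequence of (f).
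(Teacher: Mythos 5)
The paper does not prove this theorem: it is quoted from \cite{BKM12,Mc12} as external input, so there is no internal proof to compare your proposal against. Judged as a reconstruction of the cited arguments, your outline has the right skeleton --- cuspidal modules cut out by a restriction condition relative to the convex order $<_\redez$, reality of $S_\redez(\beta)$, unitriangularity of the proper standard modules against the simples, and the minimal-pair short exact sequence --- and you correctly flag the existence and uniqueness of the cuspidal modules as the hard step that you are deferring to \cite{Mc12}.

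Two caveats, both genuine gaps if the sketch is to become a proof. First, your route to (a) and (b) through the R-matrix formalism of \cite{KKK13b} applies only to \emph{symmetric} KLR algebras, whereas the theorem is asserted for every finite Cartan matrix, including types $B$, $C$, $F_4$, $G_2$. Brundan--Kleshchev--McNamara and McNamara instead obtain reality of the cuspidal modules and the one-dimensionality of ${\rm Hom}_{R(\wt(\um))}(\Stom,\Sgetsm)$ from the Mackey filtration combined with the cuspidality condition and a character count, which works uniformly in all finite types; you would need either to restrict to simply-laced type or to supply that argument. Second, in (c)--(e) you derive unitriangularity by passing to the Grothendieck group and invoking Lusztig's PBW theory; this reverses the logical order of \cite{Mc12}, where the triangular decomposition of $[\Stom]$ is established module-theoretically (again via Mackey's theorem and induction along the convex order) and is then \emph{used} to identify the classes $[S_\redez(\beta)]$ with dual PBW generators. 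Deducing (c) from the PBW identification is circular unless that identification is imported from an independent source such as \cite{Kato12}, which covers only the ADE case. Neither caveat is fatal, but both must be addressed.
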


Note that the set ${\rm Irr}(R)$ of isomorphism classes of all
simple $R$-modules
 forms a natural basis of $[\Rep(R)]$ and does {\it not}
depend on the choice of reduced expression $\redez$ of $w_0$.

\medskip

We also note that Theorem \ref{thm: BkMc} implies that
\begin{itemize}
\item[{\rm (i)}] the subset $\overset{\to}{S}_\redez(R) \seteq \left\{
\big[\overset{\to}{S}_\redez(\um) \big] \ | \ \um_\redez \in \Z_{\ge
0}^{\ell(w_0)} \right\}$ of isomorphism classes of $R$-modules
 forms another basis of $[\Rep(R)]$,
\item[{\rm (ii)}] $<^\tb_\redez$ can be interpreted as a unitriangular matrix which plays the role of the transition matrix between $\overset{\to}{S}_\redez(R)$ and
${\rm Irr}(R)$ for {\it any} reduced expression $\redez$ of $w_0$.
\end{itemize}

\subsection{$\overset{\to}{S}_{[\redez]}(R)$ and $\prec^\tb_{[\redez]}$} In this subsection, we will apply the observations in the previous sections to the representation theory of KLR-algebras and
PBW-bases:

\begin{theorem} \label{thm: app KLR1} \cite[Theorem 5.13]{Oh15E} For any $\redez$ of $w_0$ and $\um_\redez \in \Z_{\ge 0}^{\ell(w_0)}$, we can define the module $\overset{\to}{S}_{[\redez]}(\um)$; i.e,
$$\overset{\to}{S}_\redez(\um_\redez)  \simeq \overset{\to}{S}_{\redez'}(\um_{\redez'}) \quad \text{ for all } \redez,\redez' \in [\redez].$$
Moreover, we can refine the transition matrix between
$\overset{\to}{S}_{[\redez]}(R)\seteq \{
\overset{\to}{S}_{[\redez]}(\um) \ | \ \um \in \Z^{\ell(w_0)}_{\ge
0} \}$ and ${\rm Irr}(R)$ by replacing $<^\tb_{\redez}$ with the far
coarser order $\prec^\tb_{[\redez]}$.
\end{theorem}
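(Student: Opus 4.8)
The plan is to establish the two assertions in turn: the well-definedness $\overset{\to}{S}_\redez(\um_\redez) \simeq \overset{\to}{S}_{\redez'}(\um_{\redez'})$ for all $\redez,\redez' \in [\redez]$, and then the refinement of the transition matrix, which I expect to fall out formally from the first part together with Theorem \ref{thm: BkMc}. For the first assertion, since any $\redez' \in [\redez]$ is reached from $\redez$ by a finite chain of commutation relations $s_{i_k}s_{i_{k+1}}=s_{i_{k+1}}s_{i_k}$ with $d_{\Delta}(i_k,i_{k+1})>1$, it suffices to treat one such move, interchanging the adjacent positions $k$ and $k+1$. First I would record that this move swaps the labels $\beta^\redez_k$ and $\beta^\redez_{k+1}$ and fixes all others, and that
$$(\beta^\redez_k, \beta^\redez_{k+1}) = (s_{i_1}\cdots s_{i_{k-1}}(\alpha_{i_k}),\, s_{i_1}\cdots s_{i_k}(\alpha_{i_{k+1}})) = (\alpha_{i_k}, s_{i_k}(\alpha_{i_{k+1}})) = (\alpha_{i_k}, \alpha_{i_{k+1}}) = 0,$$
which is exactly the base-case computation in the proof of Proposition \ref{Lemma:converse}; equivalently, $\beta^\redez_k$ and $\beta^\redez_{k+1}$ are $\prec_{[\redez]}$-incomparable and so carry no path in $\Upsilon_{[\redez]}$ by Theorem \ref{Thm:order_path}. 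Rewriting $\um$ as $\um_{\redez'}$ merely interchanges the multiplicities at positions $k$ and $k+1$, so $\overset{\to}{S}_\redez(\um)$ and $\overset{\to}{S}_{\redez'}(\um_{\redez'})$ agree except that the consecutive blocks $S_\redez(\beta_k)^{\conv m_k}$ and $S_\redez(\beta_{k+1})^{\conv m_{k+1}}$ occur in the opposite order.

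The key input is then that orthogonality $(\beta_k,\beta_{k+1})=0$ forces the proper standard modules to $q$-commute, $S_\redez(\beta_k)\conv S_\redez(\beta_{k+1}) \simeq S_\redez(\beta_{k+1})\conv S_\redez(\beta_k)$ up to a grading shift, which is a standard property of proper standard modules over KLR algebras of finite type (\cite{BKM12,Mc12}). Applying this to each pair and using associativity of $\conv$, I would conclude $S_\redez(\beta_k)^{\conv m_k}\conv S_\redez(\beta_{k+1})^{\conv m_{k+1}} \simeq S_\redez(\beta_{k+1})^{\conv m_{k+1}}\conv S_\redez(\beta_k)^{\conv m_k}$ and hence $\overset{\to}{S}_\redez(\um) \simeq \overset{\to}{S}_{\redez'}(\um_{\redez'})$, ignoring grading shifts. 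Iterating along the chain of commutation moves proves the first assertion, so that $\overset{\to}{S}_{[\redez]}(\um)$ is well defined.

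For the refinement, I would first fix, via Theorem \ref{thm: BkMc}(e), the labeling of the basis ${\rm Irr}(R)=\{\,[{\rm Im}(\rmat{\um'})] \mid \um'\,\}$; by the first assertion the head $\hd(\overset{\to}{S}_\redez(\um))={\rm Im}(\rmat{\um})$ and this whole labeling are intrinsic to the multiset $\um'$, hence independent of the representative $\redez \in [\redez]$. Consequently the expansion of the fixed class $[\overset{\to}{S}_{[\redez]}(\um)]$ in this basis has coefficients $c_{\um'}(q)\in\Z_{\ge 0}[q^{\pm 1}]$ with $c_\um=1$, whose support is independent of $\redez$. Now Theorem \ref{thm: BkMc}(c), applied with each individual $\redez \in [\redez]$, constrains this support to $\{\,\um' \mid \um' <^\tb_\redez \um\,\}$; intersecting these constraints over all $\redez \in [\redez]$ and invoking Definition \ref{def: redezprectb} gives $c_{\um'}\neq 0,\ \um'\neq\um \Rightarrow \um' \prec^\tb_{[\redez]} \um$. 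This is precisely the asserted unitriangularity of the transition matrix with respect to the coarser order $\prec^\tb_{[\redez]}$.

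The hard part is the module-level commutation in the first assertion: equality of classes in $[\Rep(R)]$ by itself never yields a module isomorphism, so one genuinely needs the $q$-commutation of orthogonal proper standard modules, for which the combinatorial input (orthogonality of incomparable roots, Proposition \ref{Lemma:converse}) is what makes the KLR commutation applicable. Once that isomorphism is secured, the second assertion is soft: both $\overset{\to}{S}_{[\redez]}(\um)$ and ${\rm Irr}(R)$ are manifestly representative-independent, so the unitriangular bounds coming from all representatives may simply be intersected to produce the order $\prec^\tb_{[\redez]}$.
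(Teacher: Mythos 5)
Your overall architecture matches the paper's: reduce the first assertion to a single commutation move, swap the two adjacent convolution factors at the module level, and obtain the refined unitriangularity by intersecting the bounds of Theorem \ref{thm: BkMc}(c) over all representatives of $[\redez]$ (this second half is exactly the paper's argument and is fine). The gap is in your justification of the swap. You claim that the orthogonality $(\beta^\redez_k,\beta^\redez_{k+1})=0$ by itself forces $S_\redez(\beta_k)\conv S_\redez(\beta_{k+1})\simeq S_\redez(\beta_{k+1})\conv S_\redez(\beta_k)$, citing this as a standard property from \cite{BKM12,Mc12}. No such general property holds. In type $A_3$ the roots $[1,2]=\alpha_1+\alpha_2$ and $[2,3]=\alpha_2+\alpha_3$ are orthogonal, yet for the convex order coming from $\redez=(s_1,s_2,s_3,s_1,s_2,s_1)$, namely $\alpha_1<[1,2]<[1,3]<\alpha_2<[2,3]<\alpha_3$, the sequence $\um'=([1,3],\alpha_2)$ has the same weight as $\um=([1,2],[2,3])$ and satisfies $\um'<^\tb_\redez\um$; Theorem \ref{thm: BkMc}(c) therefore permits an extra composition factor, and indeed $S_\redez([1,2])\conv S_\redez([2,3])$ is reducible (this is the classical pair of linked segments), so the two orderings of the product are non-isomorphic modules. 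Orthogonality is thus necessary for incomparability (Proposition \ref{Lemma:converse}) but not sufficient for the commutation you need.

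What actually makes the swap work is not orthogonality but adjacency. Since $\beta^\redez_k$ and $\beta^\redez_{k+1}$ occupy consecutive positions in $\redez$ (and in the opposite order in $\redez'$), one checks directly from Definition \ref{def: redezletb} that there is no sequence $\um'$ with $\wt(\um')=\beta_k+\beta_{k+1}$ and $\um'<^\tb_\redez(\beta_k,\beta_{k+1})$: such a $\um'$ would have to drop both entries at positions $k$ and $k+1$ while agreeing with $\um$ outside $\{k,k+1\}$, contradicting the weight condition. Hence by Theorem \ref{thm: BkMc}(c) the product $S_\redez(\beta_k)\conv S_\redez(\beta_{k+1})$ has the single composition factor ${\rm Im}\bigl(\rmat{(\beta_k,\beta_{k+1})}\bigr)$, and by (b) both orderings are simple and isomorphic to it. This is precisely Proposition \ref{prop: incomp simple} of the paper (via Corollary \ref{cor: imcom adja}), and your setup already supplies all of its hypotheses; with that substitution for your ``standard property,'' the rest of your proof closes.
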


\begin{remark}
For any $\redez ,\redez' \in [\redez]$, Theorem \ref{thm: BkMc}
tells that
$$ S_{\redez}(\be) \simeq  S_{\redez'}(\be)  \quad \text{ for all } \be \in \PR.$$
Thus we denote by $S_{[\redez]}(\be)$ the simple module
$S_{\redez'}(\be)$ for any $\redez' \in [\redez]$ and $\be \in \PR$.
\end{remark}

\begin{proposition} \label{prop: incomp simple}
Let $(\al,\be)$ be an incomparable pair of positive roots with
respect to the order $\prec_{[\redez]}$. Then $(\al,\be)$ is $[\redez]$-simple and we have
$$ S_{[\redez]}(\al) \conv S_{[\redez]}(\be) \simeq S_{[\redez]}(\be) \conv S_{[\redez]}(\al) \text{ is simple.} $$
\end{proposition}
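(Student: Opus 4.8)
The plan is to reduce both assertions to the single combinatorial fact that an incomparable pair occupies \emph{adjacent} positions, in both relative orders, inside suitable reduced expressions of $[\redez]$, and then to feed this into the transition-matrix machinery of Theorem~\ref{thm: BkMc} and Theorem~\ref{thm: app KLR1}.

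First I would translate the hypothesis. By Theorem~\ref{Thm:order_path}, the incomparability of $(\al,\be)$ with respect to $\prec_{[\redez]}$ is equivalent to the absence of any path between $\al$ and $\be$ in $\Upsilon_{[\redez]}$ (so in particular $(\al,\be)=0$ by Proposition~\ref{Lemma:converse}, although I will not need this). Corollary~\ref{cor: imcom adja} then furnishes two reduced expressions $\redez',\redez''\in[\redez]$ and indices $k,l$ with $\be^{\redez'}_k=\be$, $\be^{\redez'}_{k+1}=\al$ and $\be^{\redez''}_l=\al$, $\be^{\redez''}_{l+1}=\be$; that is, $\al$ and $\be$ are adjacent, and both of their relative orders are realized.

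Next I would establish $[\redez]$-simplicity (Definition~\ref{def: [tw]-simple pair}). Writing $(\al,\be)$ for the sequence with multiplicity one at $\al$ and $\be$, suppose some $\um\prec^\tb_{[\redez]}(\al,\be)$ existed. By Definition~\ref{def: redezprectb} this would force $\um_{\redez'}<^\tb_{\redez'}(\al,\be)_{\redez'}$, where along $\redez'$ the sequence $(\al,\be)$ is supported exactly on the two adjacent positions $k,k+1$, each with multiplicity one. Unwinding Definition~\ref{def: redezletb}, the two strict drops must occur at positions inside this support while every entry outside is left unchanged; a short case check over the possibilities ($p=q=k$, $p=k<q=k+1$, $p=q=k+1$) shows that any admissible $\um$ has weight in $\{0,\al,\be\}$, contradicting the required weight $\al+\be$. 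Hence no such $\um$ exists and $(\al,\be)$ is $[\redez]$-simple.

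Finally I would read off commutativity and simplicity. Reading $(\al,\be)$ along $\redez'$ gives $\overset{\to}{S}_{\redez'}(\al,\be)=S_{[\redez]}(\be)\conv S_{[\redez]}(\al)$, and along $\redez''$ gives $\overset{\to}{S}_{\redez''}(\al,\be)=S_{[\redez]}(\al)\conv S_{[\redez]}(\be)$; by the well-definedness in Theorem~\ref{thm: app KLR1} (together with the reordering Convention~\ref{conv: conv1}) these coincide, which yields the asserted $q$-commutativity. For simplicity, Theorem~\ref{thm: app KLR1} lets me rewrite the unitriangular expansion of Theorem~\ref{thm: BkMc}(c) with $\prec^\tb_{[\redez]}$ in place of $<^\tb_{\redez}$; since $(\al,\be)$ is $[\redez]$-simple the correction sum is empty, so $[\overset{\to}{S}_{[\redez]}(\al,\be)]=[\mathrm{Im}(\rmat{(\al,\be)})]$ is a single simple class by Theorem~\ref{thm: BkMc}(b). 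As $\mathrm{Irr}(R)$ is a basis of $[\Rep(R)]$ with nonnegative composition multiplicities, $\overset{\to}{S}_{[\redez]}(\al,\be)$ then has a unique composition factor and is therefore simple. I expect the main obstacle to be purely bookkeeping: tracking the reordering convention across $\redez'$ and $\redez''$, and making sure the $\prec^\tb_{[\redez]}$-correction sum is genuinely empty rather than merely small.
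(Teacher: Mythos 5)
Your combinatorial core is exactly the paper's: use Corollary \ref{cor: imcom adja} to realize $\al$ and $\be$ in adjacent positions (in both relative orders) inside reduced expressions of $[\redez]$, observe that no sequence $\um$ satisfies $\um <^\tb_{\redez'} (\al,\be)$, and feed this into Theorem \ref{thm: BkMc}(b),(c). Your explicit case check ($p=q=k$, $p=k<q=k+1$, $p=q=k+1$) of why no smaller sequence can exist is a correct elaboration of a step the paper leaves implicit, and your reduction of $[\redez]$-simplicity to the single expression $\redez'$ is valid since $\prec^\tb_{[\redez]}$ refines $<^\tb_{\redez'}$.

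However, there is a circularity in how you close the argument. You invoke Theorem \ref{thm: app KLR1} twice: once to identify $\overset{\to}{S}_{\redez'}(\al,\be)$ with $\overset{\to}{S}_{\redez''}(\al,\be)$ (giving commutativity), and once to replace $<^\tb_{\redez}$ by $\prec^\tb_{[\redez]}$ in the expansion of Theorem \ref{thm: BkMc}(c). But in this paper the proof of Theorem \ref{thm: app KLR1} is derived \emph{from} Proposition \ref{prop: incomp simple} (its first line reads ``By Proposition \ref{prop: incomp simple}, the isomorphism class \dots does not depend on the choice of $\redez \in [\redez]$''), so you cannot use it here. Both uses are avoidable. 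For simplicity, you do not need the coarser order at all: since no $\um$ is $<^\tb_{\redez'}$-smaller than $(\al,\be)$, Theorem \ref{thm: BkMc}(c) applied directly to $\redez'$ already forces $[S_{[\redez]}(\be)\conv S_{[\redez]}(\al)] = [{\rm Im}(\rmat{(\al,\be)})]$, which is simple by part (b); running the same argument with $\redez''$ shows $S_{[\redez]}(\al)\conv S_{[\redez]}(\be)$ is simple as well. For commutativity, use Theorem \ref{thm: BkMc}(b): the image of $\rmat{(\al,\be)}$ is simultaneously the head of one convolution product and the socle of the other, and since both products are now known to be simple, both are isomorphic to that image and hence to each other. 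With these two substitutions your proof matches the paper's.
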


\begin{proof}
By Corollary \ref{cor: imcom adja}, there exist $\redez' \in
[\redez]$ and $k \in \Z_{\ge 1}$ such that $\al=\be^{\redez'}_k$ and
$\be=\be^{\redez'}_{k+1}$. Let us denote by $(\al,\be)$ the sequence
$\um_{\redez'}$ such that $m_k=m_{k+1}=1$ and $m_i=0$ for all $i \ne
k,k+1$. Then there is no $\um_{\redez'}$ such that $\um
<^\tb_{\redez'} (\al,\be)$. Hence Theorem \ref{thm: BkMc} {\rm (c)}
tells that the composition series of $S_{[\redez]}(\al) \conv
S_{[\redez]}(\be)$ consists of ${\rm Im}(\rmat{(\al,\be)})$. Then
our assertion follows from Theorem \ref{thm: BkMc} {\rm (b)}.
\end{proof}

\begin{remark}
Proposition \ref{prop: incomp simple} tells that $S_{[\redez]}(\al)$ and $S_{[\redez]}(\be) $ {\it commutes} up to grading shift (or {\it $q$-commutes}) if $\alpha$ and $\beta$ are incomparable with respect to $\prec_{[\redez]}$.
However, the converse is not true.
In Proposition \ref{prop: q-comm comparable} below, we will show that for comparable pair $(\alpha,\beta)$, $S_{[\redez]}(\al)$ and $S_{[\redez]}(\be)$ commutes
if they lie in the same sectional path in $\Upsilon_{[\redez]}$, which is a generalization of \cite[Proposition 4.2]{Oh15E}.
\end{remark}

\begin{proof}[Proof of Theorem \ref{thm: app KLR1}] By proposition \ref{prop: incomp simple}, the isomorphism class of
the module $\overset{\to}{S}_\redez(\um_\redez)$ and the
homomorphism $\rmat{\um_\redez}$ does not depend on the choice of
$\redez \in [\redez]$. Thus our first assertion follows. By applying
the first assertion to \eqref{eq: not good filter} for all
$\redez'\in [\redez]$, we have
\begin{align*}
[\overset{\to}{S}_{[\redez]}(\um)] \in [{\rm Im}(\rmat{\um})] +
\displaystyle\sum_{ \substack{\um' <^{\tb}_{\redez'} \um \text{ for
all } \redez' \in [\redez] \\ \wt(\um')=\wt(\um)} } \Z_{\ge
0}[q^{\pm 1}] [{\rm Im}(\rmat{\um'})].
\end{align*}
Thus our second assertion follows from the definition of
$\prec^\tb_{[\redez]}$; that is,
\begin{equation} \label{eq: good filter}
[\overset{\to}{S}_{[\redez]}(\um)] \in [{\rm Im}(\rmat{\um})] +
\displaystyle\sum_{ \substack{\um' \prec^{\tb}_{[\redez]} \um \\
\wt(\um')=\wt(\um)} } \Z_{\ge 0}[q^{\pm 1}] [{\rm Im}(\rmat{\um'})]. \qedhere
\end{equation}
\end{proof}

\begin{proposition} \label{prop: q-comm comparable}
Let $\al$ and $\be$ be in a sectional path of $\Upsilon_{[\redez]}$. Then $(\al,\be)$ is $[\redez]$-simple and we have
$$ S_{[\redez]}(\al) \conv S_{[\redez]}(\be) \simeq S_{[\redez]}(\be) \conv S_{[\redez]}(\al) \text{ is simple.} $$
\end{proposition}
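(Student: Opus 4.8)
The plan is to reduce everything to the single combinatorial assertion that the pair $(\al,\be)$ is $[\redez]$-simple; once that is in hand, the proof runs parallel to that of Proposition~\ref{prop: incomp simple}. Since $\al$ and $\be$ lie on a sectional path they are comparable by Theorem~\ref{Thm:order_path}, so I may assume $\al\prec_{[\redez]}\be$ and write $\um=(\al,\be)$ for the sequence whose only nonzero entries are a single $1$ at the position of $\al$ and a single $1$ at the position of $\be$. Granting $[\redez]$-simplicity, the refined filtration \eqref{eq: good filter} of Theorem~\ref{thm: app KLR1},
\[
[\overset{\to}{S}_{[\redez]}(\um)] \in [{\rm Im}(\rmat{\um})] + \sum_{\substack{\um'\prec^\tb_{[\redez]}\um \\ \wt(\um')=\wt(\um)}} \Z_{\ge0}[q^{\pm1}]\,[{\rm Im}(\rmat{\um'})],
\]
has an empty sum, so $[\overset{\to}{S}_{[\redez]}(\um)]=[{\rm Im}(\rmat{\um})]$. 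As ${\rm Im}(\rmat{\um})$ is the simple head of $\overset{\to}{S}_{[\redez]}(\um)=S_{[\redez]}(\al)\conv S_{[\redez]}(\be)$ by Theorem~\ref{thm: BkMc}~(b), this equality forces $S_{[\redez]}(\al)\conv S_{[\redez]}(\be)$ to be simple, and then the nonzero map $\rmat{\um}$ between the two simple modules $S_{[\redez]}(\al)\conv S_{[\redez]}(\be)$ and $S_{[\redez]}(\be)\conv S_{[\redez]}(\al)$ is an isomorphism up to grading shift, which is the desired $q$-commutation.

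So the entire content is the $[\redez]$-simplicity of $(\al,\be)$, and the first step is to translate Definition~\ref{def: [tw]-simple pair} into root-system language. Unwinding Definitions~\ref{def: redezletb} and~\ref{def: redezprectb}, for a fixed $\redez'\in[\redez]$ with $\al=\be^{\redez'}_a$ and $\be=\be^{\redez'}_b$ ($a<b$) a sequence $\um'<^\tb_{\redez'}\um$ is precisely a nonnegative combination of the roots occupying positions strictly between $a$ and $b$ that has weight $\al+\be$. Imposing this for every $\redez'\in[\redez]$ pins down $\um'\prec^\tb_{[\redez]}\um$ as a multiset $\{\ga_1,\dots,\ga_r\}$ of positive roots with $\sum_i\ga_i=\al+\be$ and $\al\prec_{[\redez]}\ga_i\prec_{[\redez]}\be$ for all $i$. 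Hence $(\al,\be)$ is $[\redez]$-simple exactly when $\al+\be$ has no decomposition into positive roots each of which is strictly between $\al$ and $\be$ in the convex partial order $\prec_{[\redez]}$.

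The decisive step --- and the one I expect to be hardest --- is to rule out such a decomposition by appealing to Proposition~\ref{prop: sectional simplicity}. That proposition already forbids decompositions of $\al+\be$ whose parts all lie on the sectional path through $\al$ and $\be$, so the only gap is to show that an a~priori merely ``in-between'' summand $\ga_i$ is forced onto that sectional path. To close this gap I would invoke the component-sharing description of sectional paths (Theorems~\ref{thm: labeling of type A} and~\ref{Rem:label_Dtype}): $\al$ and $\be$ share a component $\epsilon_c$, whence $\al+\be$ carries coefficient $2$ at $\epsilon_c$, and the roots that share $\epsilon_c$ and are comparable to both $\al$ and $\be$ are exactly those on the sectional path. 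Combining this with convexity of $\prec_{[\redez]}$ should force each $\ga_i$ to share $\epsilon_c$ and therefore to lie on the sectional path, producing the configuration forbidden by Proposition~\ref{prop: sectional simplicity}. The subtle point is precisely this implication --- that betweenness in the partial order together with the weight identity $\sum_i\ga_i=\al+\be$ confines all summands to the sectional path --- since it is the only place where the order and the root geometry must be married; for the exceptional types $E$ and $F_4$, where Theorem~\ref{Rem:label_Dtype} is unavailable, I would instead run the type-free argument already used for Proposition~\ref{prop: sectional simplicity}, based on the explicit parametrization $w\,s_{i_1}\cdots s_{i_{t-1}}(\al_{i_t})$ of a sectional path.
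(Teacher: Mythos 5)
Your overall route is the paper's own: establish that $(\al,\be)$ is $[\redez]$-simple and then feed this into Theorem \ref{thm: app KLR1} (i.e.\ \eqref{eq: good filter}) to get simplicity of the convolution and the $q$-commutation. The KLR-theoretic half of your argument is correct, and your unwinding of Definitions \ref{def: redezletb}--\ref{def: [tw]-simple pair} into the statement ``no multiset $\{\ga_1,\dots,\ga_r\}$ of roots with $\al\prec_{[\redez]}\ga_i\prec_{[\redez]}\be$ and $\sum_i\ga_i=\al+\be$'' is also correct. You have moreover noticed, correctly, that Proposition \ref{prop: sectional simplicity} only forbids decompositions whose summands all lie \emph{on the same sectional path}, whereas $[\redez]$-simplicity requires forbidding decompositions into arbitrary in-between roots. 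The paper's proof is the two-line deduction ``Proposition \ref{prop: sectional simplicity} implies $(\al,\be)$ is simple; apply Theorem \ref{thm: app KLR1}'' and silently elides exactly this point, so you have located the one genuinely nontrivial step.

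Your proposed bridge, however, does not close that gap. You assert that component-sharing together with convexity ``should force'' each summand $\ga_i$ onto the sectional path, but (i) Theorems \ref{Thm: labeling of type A} and \ref{Rem:label_Dtype} say that roots on a sectional path share a component, not that every root between $\al$ and $\be$ sharing that component lies on the path --- and it is this unproved converse that your argument needs; (ii) the coefficient-$2$ bookkeeping at $\epsilon_c$ is delicate outside type $A$, where roots can have $\epsilon$-coefficients other than $\pm1$ (e.g.\ $2\epsilon_i$ in type $C$); and (iii) for types $E$ and $F_4$ the component theorem is explicitly not available in the paper, and your fallback to the parametrization $w s_{i_1}\cdots s_{i_{t-1}}(\al_{i_t})$ is only a gesture, not an argument. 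So the decisive combinatorial implication (betweenness in $\prec_{[\redez]}$ plus the weight identity confines all summands to the sectional path) remains unproven in your write-up --- as it does, to be fair, in the paper itself.
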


\begin{proof} Proposition \ref{prop: sectional simplicity} implies that $(\al,\be)$ is a simple pair with respect to $\prec_{[\redez]}$. Thus our assertion follows from Theorem \ref{thm: app KLR1}.
\end{proof}

\begin{corollary}
Let $\beta_1, \beta_2, \cdots, \beta_p$ be in a sectional path of $\Upsilon_{[\redez]}$.  We have
\[S_{[\redez]}([\beta_1])^{\conv m_1}\conv\cdots \conv S_{[\redez]}(\beta_p)^{\conv m_{p}} \ \text{is simple for any $(m_1, m_2, \cdots, m_p)\in \Z_{\geq 0}^p$.} \]
\end{corollary}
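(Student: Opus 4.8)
The plan is to translate the statement into the language of proper standard modules and the order $\prec^{\tb}_{[\redez]}$ from Section \ref{sec: KLR}, and then reduce it to the kind of $[\redez]$-simplicity governed by Proposition \ref{prop: sectional simplicity}. First I would observe that roots on a single sectional path form a $\prec_{[\redez]}$-chain: a sectional path is a directed path in $\Upsilon_{[\redez]}$, so Theorem \ref{Thm:order_path} yields, after relabeling, $\beta_1 \prec_{[\redez]} \cdots \prec_{[\redez]} \beta_p$, and this relative order is shared by every total order $<_{\redez'}$ with $\redez' \in [\redez]$. By Proposition \ref{prop: q-comm comparable} the factors $S_{[\redez]}(\beta_i)$ pairwise commute up to a grading shift, and since convolution is associative and functorial, swapping adjacent factors is an isomorphism; hence the product is independent of the ordering of its factors, and rearranging into increasing $<_{\redez}$-order identifies it with $\overset{\to}{S}_{[\redez]}(\um)$, where $\um_{\redez} \in \Z_{\ge 0}^{\ell(w_0)}$ carries $m_i$ at the position of $\beta_i$ and $0$ elsewhere.

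By the refined filtration \eqref{eq: good filter} it then suffices to show that $\um$ admits no $\um' \prec^{\tb}_{[\redez]} \um$ (the many-root analogue of Definition \ref{def: [tw]-simple pair}), for then \eqref{eq: good filter} collapses to $[\overset{\to}{S}_{[\redez]}(\um)] = [{\rm Im}(\rmat{\um})]$, the class of a single simple module, forcing $\overset{\to}{S}_{[\redez]}(\um)$ to be simple. I would prove this by induction on $\sum_i m_i$; the base case $\sum_i m_i \le 2$ is exactly Theorem \ref{thm: BkMc}{\rm (a)} together with Proposition \ref{prop: q-comm comparable}. For the inductive step I would suppose a $\um' \prec^{\tb}_{[\redez]} \um$ exists and show that every root occurring in $\um'$ must lie on the same sectional path: since $\beta_1, \ldots, \beta_p$ occupy the same relative positions in all $<_{\redez'}$, any root of $\um'$ lying strictly outside the $\prec_{[\redez]}$-interval spanned by the support of $\um$ can be moved to an extremal position in a suitable $\redez' \in [\redez]$, violating one of the two endpoint strict inequalities built into $<^{\tb}_{\redez'}$. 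Once the support of $\um'$ is pinned to the path, the weight identity $\wt(\um') = \wt(\um)$ produces a nontrivial relation among path-roots, from which one extracts a splitting $\beta_a + \beta_b = \sum_j \gamma_j$ with all $\gamma_j$ on the path and distinct from $\beta_a, \beta_b$, contradicting Proposition \ref{prop: sectional simplicity}.

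I expect the step pinning the support of $\um'$ to the sectional path to be the main obstacle; it is precisely the point where pairwise commutativity must be upgraded to simplicity of the entire convolution product, and it requires a careful analysis of how the two-sided condition defining $<^{\tb}_{\redez'}$ (equality outside an interval together with strict decrease at both of its endpoints) behaves when intersected over all $\redez' \in [\redez]$. An alternative, more module-theoretic route would bypass this combinatorics by proving that commutativity propagates through convolution — if a simple module $L$, all of whose self-convolution powers are simple, commutes with each of two commuting simple modules, then it commutes with their product — and then inducting on $p$, using Theorem \ref{thm: BkMc}{\rm (a)} to guarantee that each $S_{[\redez]}(\beta_i)^{\conv m_i}$ is itself simple; I would adopt whichever argument is shorter after the endpoint-sliding lemma is made precise.
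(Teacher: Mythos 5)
The paper gives no proof of this corollary at all; it is stated as an immediate consequence of Proposition \ref{prop: q-comm comparable}, and your first route (rewrite the product as $\overset{\to}{S}_{[\redez]}(\um)$ for $\um$ supported on the path, show $\um$ admits no $\um'\prec^{\tb}_{[\redez]}\um$, and collapse \eqref{eq: good filter} to $[{\rm Im}(\rmat{\um})]$) is exactly the expansion of the argument the paper is implicitly relying on. However, the step you yourself flag as the main obstacle is a genuine gap, and your sketch does not close it. If $\um'\prec^{\tb}_{[\redez]}\um$, then for each $\redez'\in[\redez]$ the definition of $<^{\tb}_{\redez'}$ forces the two positions $k\le s$ of strict decrease to carry roots in the support of $\um$ (hence on the sectional path), and forces every position where $\um'$ and $\um$ differ to lie in $[k,s]$; it does \emph{not} force the new roots of $\um'$ onto the sectional path. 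Your endpoint-sliding argument disposes of roots that are $\prec_{[\redez]}$-incomparable to the path roots (they can be commuted outside the interval in some $\redez'$, as in Corollary \ref{cor: imcom adja}), but it says nothing about roots that genuinely lie between two path roots in every order yet are off the path — and such roots exist in general (any vertex reachable from $\beta_b$ and reaching $\beta_a$ by a non-sectional route). For such $\um'$ the weight identity produces a relation involving off-path roots, and Proposition \ref{prop: sectional simplicity}, which only excludes decompositions of $\alpha+\beta$ into roots \emph{on the same sectional path}, cannot be invoked. (The same issue is already latent in the paper's one-line proof of Proposition \ref{prop: q-comm comparable}.)

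Your alternative, module-theoretic route is the one that actually yields the statement cleanly: each $S_{[\redez]}(\beta_i)$ is real by Theorem \ref{thm: BkMc}(a), the factors pairwise have simple convolution products by Proposition \ref{prop: q-comm comparable}, and for real simple modules pairwise simplicity propagates to the full convolution product. But the propagation lemma you would need is itself a nontrivial R-matrix argument (it is the content of the Kang--Kashiwara--Kim--Oh simplicity-of-heads theorem) and is not among the results quoted in this paper, so you would have to prove it or import it explicitly. As written, neither of your two routes is complete, and the corollary does not follow formally from pairwise simplicity alone; you should either establish the multi-root minimality of $\um$ with respect to $\prec^{\tb}_{[\redez]}$ honestly, or cite the propagation theorem for real commuting simple modules.
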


\begin{remark}
By the works in \cite{BKM12,Kato12,Mc12}, $S_{\redez}(\beta)$'s categorify the dual PBW generators of $\g$ associated to $\redez$, which are also elements of the dual canonical basis.
Hence our results in this section tell that the dual PBW monomials {\it depend only on $[\redez]$} (up to $q^\Z$) and some of them are {\it $q$-commutative} under the circumstances we characterized.
In particular, when $R$ is {\it symmetric} and $\cor$ is of characteristic $0$, simple $R$-modules categorify the dual canonical basis (\cite{R11,VV09}). Hence \eqref{eq: good filter}
provides finer information on transition map between the dual canonical basis and the dual PBW basis associated to $[\redez]$.
\end{remark}

By \eqref{ref redez h}, one can observe the following similarity among $\{ S_{[\redez]}(\al) \}$ and
$\{ S_{[\redez']}(\al') \}$ for $[\redez],[\redez']$ in the same $r$-cluster point $\lf \redez \rf$:

\begin{corollary} \label{cor: similarity} For $[\redez]$ of $w_0$, let $(i_1,i_2,\ldots,i_k)$ be a sequence of indices such that
\[  i_k \text{ is a sink of } [\redez] \ r_{i_{1}} \cdots r_{i_{k-1}}.\]
Set $w = s_{i_{k-1}} \cdots  s_{i_1}$.  For $(\al,\be) \in (\Phi^+)^2$ with $[\redez]$-simple and $w \cdot \alpha, w \cdot \beta \in \Phi^+$, we have
$$ S_{[\redez] \cdot r_{\redex}}(w \cdot \al) \conv S_{[\redez] \cdot r_{\redex}}(w \cdot \be) \simeq S_{[\redez] \cdot r_{\redex}}(w \cdot \be) \conv S_{[\redez] \cdot r_{\redex}}(w \cdot \al) \text{ is simple,} $$
where $r_{\redex} \seteq r_{i_{1}} \cdots r_{i_{k-1}}$.
\end{corollary}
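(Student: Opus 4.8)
The plan is to isolate one combinatorial statement about the reflection maps and deduce the corollary from the categorical machinery of Section~\ref{sec: KLR}. Concretely, the assertion to prove is that \emph{if $(\al,\be)$ is $[\redez]$-simple and $w\cdot\al,\ w\cdot\be\in\Phi^+$, then $(w\cdot\al,\ w\cdot\be)$ is $[\redez]\cdot r_{\redex}$-simple}. Granting this, the filtration \eqref{eq: good filter} written for $[\redez]\cdot r_{\redex}$ has an empty error term, so $\overset{\to}{S}_{[\redez]\cdot r_{\redex}}(w\cdot\al,\, w\cdot\be)$ is simple; the claimed commutativity and simplicity of the convolution then follow from Theorem~\ref{thm: BkMc}, exactly as in the proofs of Proposition~\ref{prop: incomp simple} and Proposition~\ref{prop: q-comm comparable}.

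First I would reduce to a single reflection. Writing $r_{\redex}=r_{i_1}\cdots r_{i_{k-1}}$ and $w=s_{i_{k-1}}\cdots s_{i_1}$ with each $i_j$ a sink of $[\redez]\cdot r_{i_1}\cdots r_{i_{j-1}}$, the hypothesis $w\cdot\al,\,w\cdot\be\in\Phi^+$ says exactly that $\al$ and $\be$ are never the sink deleted at any stage, so their labels transform as $\gamma\mapsto s_{i_j}\gamma$ at each step and terminate at $w\cdot\al,\,w\cdot\be$. Thus it suffices to treat one right reflection $r_i$ at a sink $i$: given a $[\redez]$-simple pair $(\al,\be)$ with $s_i\al,\,s_i\be\in\Phi^+$, show $(s_i\al,\,s_i\be)$ is $[\redez]\cdot r_i$-simple, and then iterate. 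Here I would record the combinatorial meaning of simplicity: unwinding Definitions~\ref{def: redezletb}, \ref{def: redezprectb} and~\ref{def: [tw]-simple pair}, a witness $\um\prec^\tb_{[\redez]}(\al,\be)$ must satisfy $\wt(\um)=\al+\be$ and, in every $<_{\redez'}$ for $\redez'\in[\redez]$, have support strictly between the positions of $\al$ and $\be$. By Corollary~\ref{cor: imcom adja} this is vacuous when $\al,\be$ are incomparable; when $\al\prec_{[\redez]}\be$ it forces every root of the support into the open interval $\{\delta:\al\prec_{[\redez]}\delta\prec_{[\redez]}\be\}$. Hence $(\al,\be)$ is $[\redez]$-simple precisely when $\al+\be$ has no decomposition into positive roots all lying strictly $\prec_{[\redez]}$-between $\al$ and $\be$.

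Next I would transport this picture through $r_i$ using \eqref{ref redez h} and Theorem~\ref{Thm:order_path}. Since $s_i$ negates only $\al_i$ among positive roots, it restricts to a linear bijection of $\Phi^+\setminus\{\al_i\}$; and \eqref{ref redez h} shows, via Theorem~\ref{Thm:order_path}, that $\gamma\mapsto s_i\gamma$ is an order isomorphism from $(\Phi^+\setminus\{\al_i\},\prec_{[\redez]})$ onto $(\Phi^+\setminus\{\al_i\},\prec_{[\redez]\cdot r_i})$, while the sink $\al_i$ is deleted and re-inserted as a $\prec_{[\redez]\cdot r_i}$-maximal vertex. Now if $(s_i\al,\,s_i\be)$ failed to be $[\redez]\cdot r_i$-simple, the previous paragraph would give a decomposition of $s_i\al+s_i\be$ into positive roots strictly $\prec_{[\redez]\cdot r_i}$-between $s_i\al$ and $s_i\be$; maximality of the re-inserted $\al_i$ keeps it out of this interval, so every summand lies in $\Phi^+\setminus\{\al_i\}$, and applying the linear bijection $s_i$ together with the order isomorphism produces a decomposition of $\al+\be$ into positive roots strictly $\prec_{[\redez]}$-between $\al$ and $\be$, contradicting $[\redez]$-simplicity. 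Iterating over $r_{i_1},\dots,r_{i_{k-1}}$ then proves the corollary.

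The main obstacle I anticipate lies in the last two steps. One must track the bidirectional strictness in Definition~\ref{def: redezletb} carefully to conclude that a witnessing support genuinely sits in the \emph{open} convex-order interval, and one must justify that $r_i$ is an order isomorphism away from the sink with the deleted and re-created roots sitting at the extremes of $\prec$. The cleanest formulation of the latter is that the roots removed through $r_{i_1},\dots,r_{i_{k-1}}$, namely $\Phi(w^{-1})$, form a $\prec_{[\redez]}$-order ideal (successive minimal vertices), while the roots newly created, namely $\Phi(w)$, form a $\prec_{[\redez]\cdot r_{\redex}}$-order filter; this is exactly what ensures that the interval between two surviving roots never meets a deleted or created root, so that the decomposition can be pulled back by $w^{-1}$.
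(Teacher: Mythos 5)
Your argument is correct and follows exactly the route the paper intends: the paper states Corollary \ref{cor: similarity} with no proof beyond the remark that it follows ``by \eqref{ref redez h},'' and your write-up supplies precisely that observation --- the reflection $r_i$ at a sink relabels surviving vertices by $\gamma\mapsto s_i\gamma$ while preserving the arrow structure (hence, via Theorem \ref{Thm:order_path}, the order $\prec$ away from the deleted minimal and created maximal vertex), so $[\redez]$-simplicity transports along $w$, and simplicity of the convolution then follows from \eqref{eq: good filter} and Theorem \ref{thm: BkMc} as in Propositions \ref{prop: incomp simple} and \ref{prop: q-comm comparable}. Your closing remarks on the bidirectional strictness in Definition \ref{def: redezletb} and on $\Phi(w^{-1})$ being a $\prec_{[\redez]}$-order ideal correctly identify and resolve the only delicate points.
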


\appendix

\section{$r$-cluster points of $A_4$} \label{Sec:Appen_A4}

There are $62$ commutation classes of $w_0$ for $A_4$ (see
\cite[Table 1]{B99} and \cite[A006245]{OEIS}). We can check that the
$62$ commutation classes are classified into $3$-cluster points with respect to $\sigma = {}^*$ as
follows:
\begin{center}
$\begin{matrix}
\text{Type 1} \\
(5,5)
\end{matrix}$
\begin{tabular}{ | c |c | c |c |c|c| c | c | } \hline
A01 & 1213214321 & A02 & 2132143421 & A03 & 1214342312 & A04 &
3214342341 \\ \hline A05 & 4342341234 & A06 & 1321434231  & A07 &
2143423412 & A08 & 1434234123 \\ \hline
\end{tabular}
\end{center}

\begin{center}
$\begin{matrix}
\text{Type 2} \\
(4,6)
\end{matrix}$
\begin{tabular}{ | c |c | c |c |c|c| c | c | } \hline
B01 & 2123214321 & B02 & 1232143231 & B03 & 1232124321 & B04 &
1213243212 \\ \hline B05 & 2132314321 & B06 & 1323124321 & B07 &
1213432312 & B08 & 1323143231 \\ \hline B09 & 2321243421 & B10 &
2132434212 & B11 & 2124342312 & B12 & 1243421232 \\ \hline B13 &
3231243421 & B14 & 2321432341 & B15 & 2134323412 & B16 & 2143234312
\\ \hline B17 & 3212434231 & B18 & 1324342123 & B19 & 1243423123 &
B20 & 1432341232 \\ \hline B21 & 3214323431 & B22 & 1343234123 & B23
& 1432343123 & B24 & 2434212342 \\ \hline B25 & 3243421234 & B26 &
2434231234 & B27 & 4323412342 & B28 & 4342123423 \\ \hline B29 &
3432341234 & B30 & 4323431234 & B31 & 4342312343 & B32 & 3231432341
\\ \hline
\end{tabular}
\end{center}

\begin{center}
$\begin{matrix}
\text{Type 3} \\
(3,7)
\end{matrix}$
\begin{tabular}{ | c |c | c |c |c|c| c | c | } \hline
C01 & 2123243212 & C02 & 2321234321 & C03 & 2132343212 & C04 &
2123432312 \\ \hline C05 & 3212324321 & C06 & 1232432123 & C07 &
1234321232 & C08 & 3231234321 \\ \hline C09 & 3212343231 & C10 &
1323432123 & C11 & 1234323123 & C12 & 3234321234 \\ \hline C13 &
2324321234 & C14 & 2343212342 & C15 & 2432123432 & C16 & 4321234232
\\ \hline C17 & 3432312343 & C18 & 2343231234 & C19 & 4323123432 &
C20 & 3243212343 \\ \hline C21 & 3432123423 & C22 & 4321234323 & & &
& \\ \hline
\end{tabular}
\end{center}

\bibliographystyle{amsplain}

\end{document}